\def\@secnumfont{\bfseries\scshape}
\def\section{\@startsection{section}{1}%
  \z@{.7\linespacing\@plus\linespacing}{.5\linespacing}%
  {\normalfont\large\bfseries\scshape\centering}}
\def\subsection{\@startsection{subsection}{2}%
  \z@{.5\linespacing\@plus.7\linespacing}{-.5em}%
  {\normalfont\bfseries\scshape}}
\def\subsubsection{\@startsection{subsubsection}{3}%
  \z@{.5\linespacing\@plus.7\linespacing}{-.5em}%
  {\normalfont\scshape}}
\def\specialsection{\@startsection{section}{1}%
  \z@{\linespacing\@plus\linespacing}{.5\linespacing}%
  {\normalfont\centering\large\bfseries\scshape}}
\renewenvironment{proof}[1][\proofname]{\par
\pushQED{\qed}%
\normalfont \topsep4\p@\@plus4\p@\relax
\trivlist
\item[\hskip\labelsep
\bfseries
#1\@addpunct{.}]\ignorespaces
}{%
\popQED\endtrivlist\@endpefalse
}
\newcommand \Dotfill {\leavevmode \leaders \hb@xt@ 6pt{\hss .\hss }\hfill \kern \z@}
\def\@tocline#1#2#3#4#5#6#7{\relax
  \ifnum #1>\c@tocdepth % then omit
  \else
    \par \addpenalty\@secpenalty\addvspace{#2}%
    \begingroup \hyphenpenalty\@M
    \@ifempty{#4}{%
      \@tempdima\csname r@tocindent\number#1\endcsname\relax
    }{%
      \@tempdima#4\relax
    }%
    \parindent\z@ \leftskip#3\relax \advance\leftskip\@tempdima\relax
    \rightskip\@pnumwidth plus4em \parfillskip-\@pnumwidth
    #5\leavevmode\hskip-\@tempdima
      \ifcase #1
       \or\or \hskip 1.65em \or \hskip 3.3em \else \hskip 4.95em \fi%
      #6\nobreak\relax
    \Dotfill
    \hbox to\@pnumwidth{\@tocpagenum{#7}}\par
    \nobreak
    \endgroup
  \fi}
\def\l@section{\@tocline{1}{0pt}{1pc}{}{\scshape}}
\renewcommand{\tocsection}[3]{%
\indentlabel{\@ifnotempty{#2}{\ignorespaces#1 #2.\hskip 0.7em}}#3}
\def\l@subsection{\@tocline{2}{0pt}{1pc}{5pc}{}}
\def\l@subsubsection{\@tocline{3}{0pt}{1pc}{7pc}{}}
\numberwithin{equation}{section}
\newtheoremstyle{mytheorem}{.7\linespacing\@plus.3\linespacing}{.7\linespacing\@plus.3\linespacing}%
     {\itshape}%         Body font
     {}%         Indent amount (empty = no indent, \parindent = para indent)
     {\bfseries}% Thm head font (e.g. \bfseries, \scshape, \sffamily)
     {. }%        Punctuation after thm head
     {0.3ex}%     Space after thm head (\newline = linebreak)
     {\thmname{{\bfseries #1}}\thmnumber{ {\bfseries #2}}\thmnote{ (#3)}}  % Thm head spec
\theoremstyle{mytheorem}
\newtheorem{theorem}{Theorem}[section]
\newtheorem{lemma}[theorem]{Lemma}
\newtheorem{proposition}[theorem]{Proposition}
\newtheorem{remark}[theorem]{Remark}
\newcommand{\bbE}{{\ensuremath{\mathbb E}} }
\newcommand{\bbP}{{\ensuremath{\mathbb P}} }
\newcommand{\bbZ}{{\ensuremath{\mathbb Z}} }
\newcommand\bsA{\boldsymbol{A}}
\newcommand\bsB{\boldsymbol{B}}
\newcommand\bsC{\boldsymbol{C}}
\newcommand\bsD{\boldsymbol{D}}
\newcommand\bsN{\boldsymbol{N}}
\newcommand\bsY{\boldsymbol{Y}}
\newcommand\bsnu{\boldsymbol{\nu}}
\newcommand\bsell{\boldsymbol{\ell}}
\newcommand{\cI}{{\ensuremath{\mathcal I}} }
\newcommand{\cM}{{\ensuremath{\mathcal M}} }
\newcommand{\cR}{{\ensuremath{\mathcal R}} }
\newcommand{\cS}{{\ensuremath{\mathcal S}} }
\newcommand{\cT}{{\ensuremath{\mathcal T}} }
\newcommand{\cU}{{\ensuremath{\mathcal U}} }
\newcommand{\gb}{\beta}
\newcommand{\gl}{\lambda}
\newcommand{\go}{\omega}
\DeclareMathSymbol{\leqslant}{\mathalpha}{AMSa}{"36} % nicer `smaller or equal'
\DeclareMathSymbol{\geqslant}{\mathalpha}{AMSa}{"3E} % nicer `larger or equal'
\DeclareMathSymbol{\eset}{\mathalpha}{AMSb}{"3F}     % nicer `emptyset'
\newcommand{\sumtwo}[2]{\sum_{\substack{#1 \\ #2}}} % sum with 2 lines
\newcommand{\sumthree}[3]{\sum_{\substack{#1 \\ #2 \\ #3}}} % sum with 3 lines
\newcommand{\be}{\begin{equation}}
\newcommand{\ee}{\end{equation}}
\newcommand{\R}{\mathbb{R}}
\newcommand{\Z}{\mathbb{Z}}
\newcommand{\N}{\mathbb{N}}
\def\bs{\boldsymbol}
\newcommand{\PEfont}{\mathrm}
\newcommand{\p}{\ensuremath{\PEfont P}}
\DeclareMathOperator{\e}{\ensuremath{\PEfont E}}
\newcommand{\E}{\e}
\renewcommand{\P}{\p}
\DeclareMathOperator{\bbvar}{\ensuremath{\mathbb{V}ar}}
\newcommand{\ind}{\mathds{1}}
\newcommand{\eps}{\varepsilon}
\renewcommand{\epsilon}{\varepsilon}
\renewcommand{\theta}{\vartheta}
\renewcommand{\rho}{\varrho}
\newenvironment{myenumerate}{%
\renewcommand{\theenumi}{\arabic{enumi}}%
\renewcommand{\labelenumi}{{\rm(\theenumi)}}%
\begin{list}{\labelenumi}
	{%
	\setlength{\itemsep}{0.4em}%
	\setlength{\topsep}{0.5em}%
	\setlength\leftmargin{2.45em}%
	\setlength\labelwidth{2.05em}%
	\setlength{\labelsep}{0.4em}%
	\usecounter{enumi}%
	}%
	}%
{\end{list}
}
\renewenvironment{enumerate}{
\begin{myenumerate}}%
{\end{myenumerate}}
\newenvironment{myitemize}{%
\begin{list}{$\bullet$}%
 	{%
	\setlength{\itemsep}{0.4em}%
	\setlength{\topsep}{0.5em}%
	\setlength\leftmargin{2.65em}%
	\setlength\labelwidth{2.65em}%
	\setlength{\labelsep}{0.4em}%
%	\usecounter{enumi}%
	}%
	}%
{\end{list}}
\renewenvironment{itemize}{
\begin{myitemize}}%
{\end{myitemize}}
\date{\today}
\newcommand\dd{\mathrm{d}}
\newcommand\hbeta{{\hat{\beta}}}
\newcommand\bx{\boldsymbol{x}}
\newcommand\by{\boldsymbol{y}}
\newcommand\sfc{\mathsf{c}}
\newcommand\bK{\boldsymbol{K}}
\newcommand\bU{\boldsymbol{U}}
\begin{document}

\title[Critical directed polymer and stochastic heat
equation on $\Z^{2+1}$]{On the moments of the $(2+1)$-dimensional\\
directed polymer and stochastic heat equation\\ in the critical window}

\begin{abstract}
The partition function of the directed polymer model on $\Z^{2+1}$
undergoes a phase transition in a suitable continuum and weak disorder limit.
In this paper, we focus on a window around the critical point.
Exploiting local renewal theorems, we
compute the limiting third moment of the space-averaged partition function,
showing that it is uniformly bounded.
This implies that the
rescaled partition functions, viewed as a generalized random field
on $\R^{2}$,
have non-trivial subsequential limits,
and each such limit has the same explicit covariance structure.
We obtain analogous results for the stochastic heat equation on $\R^2$,
extending previous work by Bertini and Cancrini \cite{BC98}.

\end{abstract}

\author[F.Caravenna]{Francesco Caravenna}
\address{Dipartimento di Matematica e Applicazioni\\
 Universit\`a degli Studi di Milano-Bicocca\\
 via Cozzi 55, 20125 Milano, Italy}
\email{francesco.caravenna@unimib.it}

\author[R.Sun]{Rongfeng Sun}
\address{Department of Mathematics\\
National University of Singapore\\
10 Lower Kent Ridge Road, 119076 Singapore
}
\email{matsr@nus.edu.sg}

\author[N.Zygouras]{Nikos Zygouras}
\address{Department of Statistics\\
University of Warwick\\
Coventry CV4 7AL, UK}
\email{N.Zygouras@warwick.ac.uk}

\date{\today}

\keywords{Directed Polymer Model, Marginal Disorder Relevance, Continuum Limit,
Stochastic Heat Equation}
\subjclass[2010]{Primary: 82B44; Secondary: 82D60, 60K35}

\maketitle

\tableofcontents

\section{Introduction and results}

We set $\N := \{1,2,3,\ldots\}$ and $\N_0 := \N \cup \{0\}$.
We write $a_n \sim b_n$ to mean $\lim_{n\to\infty} a_n / b_n = 1$.
We denote by $C_b(\R^d)$ (resp.\ $C_c(\R^d)$) the space of continuous and
bounded (resp.\ compactly supported) real functions defined on $\R^d$,
with norm $|\phi|_\infty := \sup_{x\in\R^d} |\phi(x)|$.

\subsection{Directed polymer in random environment}
One of the simplest, yet also most interesting models of disordered system
is the directed polymer model in random environment on $\Z^{d+1}$, which has
been the subject of the recent monograph by Comets~\cite{C17}.

Let $S=(S_n)_{n\in\N_0}$ be
the simple symmetric random walk on $\Z^d$.
The random environment (or disorder) is a collection $\omega=(\omega_{n,x})_{(n,x)\in \N\times\Z^d}$
of i.i.d.\ random variables. We use $\P$ and $\E$,
resp.\ $\bbP$ and $\bbE$, to denote probability and expectation for $S$,
resp.\ for $\omega$. We assume that
\begin{equation} \label{eq:genomega}
	\bbE[\omega_{n,x}] = 0 \,, \qquad \bbvar[\omega_{n,x}] = 1 \,, \qquad
	\lambda(\beta):=\log \bbE[e^{\beta \omega_{n,x}}] \in \R \quad \text{ for small }
	\beta > 0.
\end{equation}

Given $\omega$, polymer length $N\in\N$, and inverse temperature (or disorder strength) $\beta>0$, the polymer measure $\P^\beta_{N}$ is then defined via a Gibbs change of measure for $S$:
\be
\P^\beta_{N} (S) :=
\frac{e^{\sum_{n=1}^{N-1}
(\beta\omega_{n, S_n} -\lambda(\beta))}}{Z_N^{\beta}} \, \P(S) \,,
\ee
where $Z_N^{\beta}$ is the normalization constant,
called \emph{partition function}:
\be\label{eq:Z}
Z_N^{\beta} := \E\Big[e^{\sum_{n=1}^{N-1}
(\beta\omega_{n, S_n} -\lambda(\beta))}\Big] \,.
\ee
(We stop the sum at $N-1$ instead of $N$, which is immaterial,
for later notational convenience.)
Note that $Z_N^{\beta}$ is a random variable, as a function of $\omega$.

We use $\P_{z}$ and $\E_{z}$ to
denote probability and expectation for the random
walk starting at $S_0=z\in\Z^d$.
We denote by $Z_N^{\beta}(z)$
the corresponding partition function:
\be\label{eq:Zx}
	Z_N^{\beta}(z) := \E_z\Big[e^{\sum_{n=1}^{N-1}
	(\beta\omega_{n, S_n} -\lambda(\beta))}\Big] \,.
\ee
We investigate the behavior as $N\to\infty$ of the diffusively rescaled random field
\begin{equation}\label{eq:diffresc}
	\Big\{ Z_{Nt}^{\beta_N}\big(x \sqrt{N}\big) \,: \quad t > 0 \,, \ x \in \R^d \Big\} \,,
\end{equation}
for suitable $\beta = \beta_N$,
where we agree that $Z_N^{\beta}(z) := Z_{\lfloor N \rfloor}^{\beta}
(\lfloor z \rfloor)$ for non-integer $N, z$.

\smallskip

In dimension $d=1$, Alberts, Khanin and Quastel~\cite{AKQ14} showed that
for $\beta_N=\hbeta N^{-1/4}$,
the random field \eqref{eq:diffresc}
converges in distribution to the Wiener chaos solution $u(t,x)$ of the
one-dimensional stochastic heat equation (SHE)
\be\label{eq:SHE}
\frac{\partial u(t,x)}{\partial t} = \frac{1}{2}\Delta u(t,x) + \hat\beta
\, \dot{W}(t,x) \, u(t,x) \,, \qquad
u(0,x) \equiv 1 \,,
\ee
where $\dot{W}$ is space-time white noise on $\R\times\R$.
The existence of such an intermediate disorder regime is a general phenomenon among models
that are so-called {\em disorder relevant}, see \cite{CSZ17a}, and the directed polymer in
dimension $d=1$ is one such example.

\smallskip

A natural question is whether an intermediate disorder regime also exists
for the directed polymer in
dimension $d=2$. We gave an affirmative answer in \cite{CSZ17b}, although the problem
turns out to be much more subtle than $d=1$. The
standard Wiener chaos approach fails,
because the model in $d=2$ is so-called marginally relevant, or critical.
We will further elaborate on this later.
Let us recall the results from \cite{CSZ17b}, which provide the starting point of this paper.

Henceforth we focus on $d=2$,
so $S = (S_n)_{n\in\N_0}$ is the simple random walk on $\Z^2$.
Let
\begin{equation}\label{eq:Zeven}
	\Z^{k}_{\rm even}:= \{ (z_1, \ldots, z_k)\in \Z^k: z_1+\cdots +z_k \text{ is even}\}.
\end{equation}
Due to periodicity, if we take $S_0\in \Z^2_{\rm even}$,
then $(n, S_n)\in \Z^{3}_{\rm even}$ for all $n\in\N$.
The transition probability kernel of $S$ will be denoted by
\begin{equation}\label{eq:qas}
	q_n(x) := \P(S_n=x\,|\,S_0=0) = \big( \, g_{n/2}(x) + o(\tfrac{1}{n})\big)
	\, 2 \, \ind_{\{(n,x) \in \Z^3_{\mathrm{even}}\}}  \qquad \text{ as $n\to\infty$},
\end{equation}
by the local central limit theorem,
where $g_u(\cdot)$ is the standard Gaussian density on $\R^2$:
\be\label{eq:gu}
	g_u(x) := \frac{1}{2\pi u} \, e^{-\frac{|x|^2}{2u}} \,, \qquad
	u > 0\,, \ x \in \R^2 \,.
\ee
For notational convenience, we will drop
the conditioning in \eqref{eq:qas} when the random walk starts from zero. 
The multiplicative factor $2$ in \eqref{eq:qas} is due to periodicity,
while the Gaussian density $g_{n/2}(x)$
is due to the fact that at time $n$,
the walk has covariance matrix $\frac{n}{2} I$.

The overlap (expected number of encounters)
of two independent
simple symmetric random walks $S$ and $S'$ on $\Z^2$ is defined by
\be\label{eq:olap}
R_N:= \sum_{n=1}^N \P(S_n=S'_n) = \sum_{n=1}^N \sum_{x\in \Z^2} q_n(x)^2
= \sum_{n=1}^N q_{2n}(0) = \frac{\log N}{\pi} \big(1+o(1)\big)
\ee
where the asymptotic behavior follows from \eqref{eq:qas}.
It was shown in \cite{CSZ17b} that the correct choice of the disorder strength
is $\beta=\beta_N= \hbeta/\sqrt{R_N}$. More precisely, denoting by $W_1$
a standard normal, we have the
following convergence in distribution:
\be\label{eq:Zlim}
	Z^{\beta_N}_N
	\xrightarrow[\,N\to\infty\,]{d}
	\begin{cases}
	\exp\Big(\sigma_{\hbeta} W_1- \frac{1}{2}\sigma^2_\hbeta\Big) &
	\text{if } \hat \beta < 1 \\
	0 & \text{if } \hat\beta \ge 1
	\end{cases} \,, \qquad
	\text{where} \quad \sigma^2_\hbeta:= \log \tfrac{1}{1-\hbeta^2} \,.
\ee
This establishes a weak to strong disorder phase transition in $\hbeta$
(with critical point $\hbeta_c=1$), similar to what was known for the directed
polymer model in $\Z^{d+1}$ with $d\geq 3$ \cite{C17}.
It was also proved in \cite[Theorem 2.13]{CSZ17b} that for $\hbeta < 1$,
after centering and rescaling, the random field
of partition functions \eqref{eq:diffresc} converges
to the solution of the SHE with \emph{additive}
space-time white noise, known as Edwards-Wilkinson fluctuation.
Similar results have been recently obtained in \cite{GRZ} for the SHE
with multiplicative noise.

\smallskip

The behavior at the critical
point $\hbeta = \hat\beta_c$,
i.e.\ $\beta_N = 1 / \sqrt{R_N}$, is quite subtle. For each $x\in \R^2$
and $t > 0$, the
partition function $Z^{\beta_N}_{Nt}(x\sqrt{N})$ converges to zero in distribution
as $N\to\infty$, by \eqref{eq:Zlim},
while its expectation is identically one,
see \eqref{eq:Zx}, and its second moment diverges. This suggests that the random field
$x \mapsto Z^{\beta_N}_{Nt}(x\sqrt{N})$ becomes \emph{rough} as $N\to\infty$,
so we should look at it as a \emph{random distribution on $\R^2$}
(actually a random measure, see below).
We thus average the field in space and define
\be \label{eq:Qavgn}
	Z^{\beta_N}_{Nt} (\phi)
	:= \frac{1}{N} \sum_{x \in \frac{1}{\sqrt{N}}\Z^2}
	\phi(x) \, Z^{\beta_N}_{Nt}\big( x \sqrt{N} \big) \,, \qquad
	\text{for} \quad \phi\in C_c(\R^2) \,.
\ee

The first moment of $Z^{\beta_N}_{Nt} (\phi)$ is easily computed by Riemann sum approximation:
\begin{equation}
	\label{eq:CphipsiQfree}
	\lim_{N\to\infty} \bbE\big[ Z^{\beta_N}_{Nt}(\phi) \big] \, = \,
	\lim_{N\to\infty} \frac{1}{N} \sum_{x \in \frac{1}{\sqrt{N}}\Z^2}
	\phi(x) \, = \,
	\int_{\R^2}
	\phi(x) \, \dd x \,.
\end{equation}
Our main result is the sharp asymptotic evaluation
of the \emph{second and third moments}. These will yield
important information on the convergence of the
generalized random field \eqref{eq:Qavgn}.

\smallskip

Let us first specify our choice of $\beta = \beta_N$.
Recalling that $\lambda(\cdot)$ is the log-moment generating function
of the disorder $\omega$, see \eqref{eq:genomega}, we fix $\beta_N$ such that
\be \label{eq:sigmaN}
	\sigma_N^2 := e^{\lambda(2\beta_N) - 2\lambda(\beta_N)} -1
	\underset{N\to\infty}{=} \, \frac{1}{R_N}
	\bigg(1 + \frac{\theta}{\log N} \big(1+o(1)\big) \bigg) \,,
	\quad \text{for some } \theta\in \R.
\ee
Since $\lambda(t) \sim \frac{1}{2} t^2$ as $t \to 0$,
we have $\beta_N \sim 1 / \sqrt{R_N}$,
so we are indeed exploring
a window around the critical point $\hbeta_c=1$.
Let us recall
the \emph{Euler-Mascheroni constant}:
\begin{equation}\label{eq:EM}
	\gamma \,:=\, -\int_0^\infty e^{-u} \, \log u \, \dd u \,\simeq\, 0.577 \,.
\end{equation}

\begin{remark}\label{rm:choose_beta}
The asymptotic behavior in \eqref{eq:olap} can be refined as follows:
\begin{equation} \label{eq:olap+}
	R_N = \tfrac{\log N}{\pi} + \tfrac{\alpha}{\pi} + o(1) \qquad
	\text{where} \qquad \alpha := \gamma + \log 16 - \pi\,,
\end{equation}
see \cite[Proposition 3.2]{CSZ18}.
This leads to an equivalent reformulation of \eqref{eq:sigmaN}:
\begin{equation*}
	\sigma_N^2 =\tfrac{\pi}{\log N}\big(1+\tfrac{\theta-\alpha}{\log N} (1+o(1))\big) \,.
\end{equation*}
It is possible to express this condition in terms of $\beta_N$
(see \cite[Appendix~A.4]{CSZ18}):
\begin{equation}\label{choose_beta}
	\beta_N^2 = \tfrac{\pi}{\log N} - \tfrac{\kappa_3 \, \pi^{3/2}}{(\log N)^{3/2}}
	+ \tfrac{\pi(\theta - \alpha)
	+ \pi^2 (\frac{3}{2} \kappa_3^2 - \frac{1}{2} - \frac{7}{12}\kappa_4)}
	{(\log N)^2} \big(1+o(1)\big)  \,,
\end{equation}
where
$\kappa_3,\kappa_4$ are the disorder cumulants,
i.e.\ $\lambda(t) = \frac{1}{2} t^2 +
\frac{\kappa_3}{3!} t^3 + \frac{\kappa_4}{4!} t^4 + O(t^5)$ as $t \to 0$.
\end{remark}

\smallskip

We define the
following special function:
\be\label{eq:Gtheta}
	G_\theta(w) := \int_0^\infty \frac{e^{(\theta - \gamma) s}
	\,s \, w^{s-1}}{\Gamma(s+1)}
	\, \dd s \,, \qquad w\in (0,\infty) \,.
\ee
We now state our first result,
where we compute the second moment of $Z^{\beta_N}_{Nt} (\phi)$.

\begin{theorem}[Second moment]\label{th:variance}
Let $\phi\in C_c(\R^2)$, $t > 0$, $\theta\in\R$.
Let $\beta_N$ satisfy \eqref{eq:sigmaN}. Then
\begin{align}
	\label{eq:varavgQfree}
	\lim_{N\to\infty} \bbvar\Big[Z^{\beta_N}_{Nt}(\phi) \Big] \ = \
	& \int_{\R^2 \times \R^2} \phi(z) \, \phi(z') \,
	K_{t, \theta}(z-z') \, \dd z \, \dd z' \,,
\end{align}
where the covariance kernel $K_{t, \theta}(\cdot)$ is given by
\begin{equation}\label{eq:Kt}
	K_{t, \theta}(x) := \pi \int\limits_{0 < u < v < t}
	g_u(x) \, G_{\theta}(v-u) \, \dd u \, \dd v \,.
\end{equation}
\end{theorem}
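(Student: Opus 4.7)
The plan is to expand the second moment as a series over the successive encounters of two independent copies of the walk $S$, to recognise the result as a convolution with a defective renewal function, and then to invoke a local renewal theorem adapted to the critical window \eqref{eq:sigmaN}. By the independence of $\omega_{n,x}$ across $(n,x)$ and the identity $\bbE[e^{2\beta\omega_{n,x} - 2\lambda(\beta)}] = 1 + \sigma_N^2$, the two-point function factorises as
\[
	\bbE\big[Z_{Nt}^{\beta_N}(z)\,Z_{Nt}^{\beta_N}(z')\big] \,=\, \E_{z,z'}\bigg[\prod_{n=1}^{Nt-1}\bigl(1 + \sigma_N^2 \, \ind_{\{S_n = S'_n\}}\bigr)\bigg],
\]
with $S, S'$ independent. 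Expanding the product, using the identity $\P_{z,z'}(S_n = S'_n) = q_{2n}(z-z')$ together with the Markov property, and splitting on the first encounter time yields the renewal-convolution identity
\[
	\bbE\big[Z_{Nt}^{\beta_N}(z)\,Z_{Nt}^{\beta_N}(z')\big] - 1 \,=\, \sigma_N^2 \sum_{n=1}^{Nt-1} q_{2n}(z-z') \, U_N(Nt-n),
\]
where
\[
	U_N(M) \,:=\, 1 + \sum_{\ell\geq 1} \sigma_N^{2\ell} \sum_{0 = b_0<b_1<\cdots<b_\ell<M} \prod_{j=1}^\ell q_{2(b_j-b_{j-1})}(0)
\]
is the defective renewal function with weight $\sigma_N^2$ and inter-arrival kernel $q_{2\cdot}(0)$. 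Since $\sigma_N^2 R_N \to 1$, this renewal problem sits exactly at the marginal threshold, which is the source of the critical phenomenon.

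Substituting into the variance and exploiting the local central limit theorem \eqref{eq:qas} in the form $q_{2n}((z-z')\sqrt N) \sim \frac{2}{N} g_{n/N}(z-z')$ on diffusive scales (the factor $2$ being exactly compensated, in the Riemann-sum limit, by the fact that $q_{2n}$ is supported on the half-density sublattice $z-z' \in \tfrac{1}{\sqrt N}\Z^2_{\mathrm{even}}$), the proof reduces to establishing a \emph{local renewal theorem} of the form
\[
	\sigma_N^2 \, U_N(Ns) \,\xrightarrow[N\to\infty]{}\, \pi\int_0^s G_\theta(r)\,\dd r
\]
uniformly on compact subsets of $(0,t]$, together with an $N$-uniform integrable upper bound that enables dominated convergence. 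Plugging this in and substituting $n = Nu$ turns the variance into
\[
	\iint_{\R^2\times\R^2} \phi(z)\,\phi(z') \biggl[\pi\!\int_0^t\! g_u(z-z')\!\int_0^{t-u}\!G_\theta(r)\,\dd r\,\dd u\biggr] \dd z\,\dd z',
\]
which coincides with the right-hand side of \eqref{eq:varavgQfree}--\eqref{eq:Kt} after the substitution $v := u+r$.

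The main obstacle is thus the local renewal estimate itself: since $q_{2n}(0) \sim \frac{1}{\pi n}$ has barely-divergent sum $R_N \sim \tfrac{\log N}{\pi}$, this is the marginal case outside the scope of classical renewal theory, and the special function $G_\theta$ arises naturally from a $\Gamma$-function Laplace transform within the Dickman-subordinator framework developed in the authors' prior work. The shift $\theta - \gamma$ appearing inside $G_\theta$ in \eqref{eq:Gtheta} is precisely what the refined asymptotic \eqref{eq:olap+} injects through the calibration \eqref{eq:sigmaN} of $\sigma_N^2$. A secondary technical point is the control of the mild temporal singularity of $G_\theta$ at the origin, for which an $N$-independent integrable upper bound on $s \mapsto \sigma_N^2 U_N(Ns)$ is needed in order to pass to the limit inside the Riemann sum.
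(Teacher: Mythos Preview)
Your proposal is correct and follows essentially the same route as the paper: both reduce the variance to a renewal sum via the replica/chaos expansion and then invoke the local renewal asymptotics for $U_N$ from \cite{CSZ18} (Proposition~\ref{UNtime}), together with the local CLT and a uniform bound for dominated convergence. The only difference is organizational: the paper works with the polynomial chaos expansion \eqref{avgmulti}, sums the spatial variables first to obtain $q^N_{0,m}(\phi,x)^2$ in \eqref{eq:Q2avgbis}, and uses the \emph{local} form $U_N(n)\sim \tfrac{\log N}{N}G_\theta(n/N)$; you instead keep the starting points $z,z'$ and use the \emph{integrated} form $\sigma_N^2\,U_N(Ns)\to\pi\int_0^s G_\theta$, which is just the summed version of the same estimate. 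Your $U_N$ equals $1+\sum_{n<M} U_N^{\text{paper}}(n)$ via the identity $\sum_x q_n(x-z)q_n(x-z')=q_{2n}(z-z')$, so the two computations match term by term.
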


The same covariance kernel $K_{t,\theta}$ was derived by different methods by
Bertini and Cancrini \cite{BC98} for the 2d Stochastic Heat Equation,
see Subsection~\ref{sec:she}.
It is not difficult to see that
\begin{align}\label{corr_ker}
K_{t, \theta}(x)\sim C_t \log \tfrac{1}{|x|}, \quad \text{as} \quad |x|\to 0,
\end{align}
with $C_t \in (0,\infty)$,
and hence the integral in \eqref{eq:varavgQfree} is finite.

\begin{remark}[Scaling covariance]\label{rm:ZNscaling}
It is easily checked from \eqref{eq:Kt} that for any $t>0$,
\be\label{eq:Ktidentity}
K_{t, \theta}(x) = K_{1, \theta_t}(x/\sqrt t) \qquad \text{with}
\qquad \theta_t:= \theta+ \log t.
\ee
This is also clear because we can
write $Z^{\beta_N}_{Nt} (\phi) = Z^{\beta_N}_{M} (\phi_t)$
with $M := Nt$ and $\phi_t(x) := t \, \phi(\sqrt{t} x)$, see \eqref{eq:Qavgn},
and note that $\beta_N$ can be expressed as $\beta_M$,
provided $\theta$ is replaced by $\theta_t = \theta + \log t$
(just set $N$ equal to $Nt$
in \eqref{eq:sigmaN}, and recall \eqref{eq:olap+}).
\end{remark}

\smallskip

The starting point
of the proof of Theorem~\ref{th:variance} is a polynomial chaos expansion of the
partition function. The variance computation
can then be cast in a \emph{renewal theory
framework}, which is the cornerstone of our approach (see Subsection~\ref{sec:outline}
for an outline).
This allows us to capture the
much more challenging \emph{third moment}
of the field.
Let us extend the function $G_\theta(w)$ in \eqref{eq:Gtheta}
with a spatial component, recalling \eqref{eq:gu}:
\begin{gather}
	\label{eq:Gthetaux}
	G_\theta(w, x) \,:=\, G_\theta(w) \, g_{w/4}(x) \,, \qquad w > 0 \,,
	\ x \in \R^2 \,.
\end{gather}
We can now state the main result of this paper.

\begin{theorem}[Third moment]\label{th:3rdmom}
Let $\phi\in C_c(\R^2)$, $t > 0$, $\theta\in\R$.
Let $\beta_N$ satisfy \eqref{eq:sigmaN}. Then
\begin{equation} \label{eq:3rdQfree}
	\lim_{N\to\infty} \bbE\Big[ \big(\, Z_{Nt}^{\beta_N}(\phi) -
	\bbE\big[ Z_{Nt}^{\beta_N}(\phi) \big] \,\big)^3 \Big] \,=\,
	\! \int\limits_{(\R^2)^3} \!
	\phi(z) \, \phi(z') \, \phi(z'')
	\, M_{t, \theta}(z,z',z'') \, \dd z \, \dd z' \, \dd z''
	\, < \, \infty \,,
\end{equation}
where the kernel $M_{t, \theta}(\cdot)$ is given by
\begin{equation}\label{eq:kernel1}
	M_{t, \theta}(z,z',z'') \, := \,
	\sum_{m=2}^\infty
	2^{m-1} \, \pi^{m} \, \big\{ \cI^{(m)}_{t, \theta}(z, z', z'') +
	\cI^{(m)}_{t, \theta}(z', z'', z) + \cI^{(m)}_{t, \theta}(z'', z, z') \big\} \,,
\end{equation}
with $\cI^{(m)}_{t, \theta}(\cdot)$ defined as follows:
\begin{equation} \label{eq:ImQfree}
\begin{split}
	\cI^{(m)}_{t, \theta}(z, z', z'')  & :=
	\!\!\!\!\!\!\!\!
	\idotsint\limits_{\substack{0 < a_1 < b_1 < \ldots < a_m < b_m < t \\
	x_1, y_1, \ldots, x_m, y_m \in \R^2}} \!\!\!\!\!\!\!\!\!
	\dd \vec a\, \dd \vec b\, \dd \vec x\, \dd \vec y \ \,
	g_{\frac{a_1}{2}}(x_1 - z) \, g_{\frac{a_1}{2}}(x_1 - z') \,
	g_{\frac{a_2}{2}}(x_2 - z'')
	\\
	& \qquad\ \ \cdot
	\rule{0pt}{1.3em}G_{\theta}(b_1 - a_1, y_1 - x_1) \,
	g_{\frac{a_2 - b_1}{2}}(x_2 - y_1) \,
	G_{\theta}(b_2 - a_2, y_2 - x_2)   \\
	& \qquad\ \cdot  \prod_{i=3}^{m}
	g_{\frac{a_i - b_{i-2}}{2}}(x_i - y_{i-2}) \, g_{\frac{a_i - b_{i-1}}{2}}(x_i - y_{i-1})
	\, G_{\theta}(b_i - a_i, y_i - x_i) \,.
\end{split}	
\end{equation}
\end{theorem}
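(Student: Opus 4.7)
\medskip

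\textbf{Plan.} The starting point will be the polynomial chaos expansion of the partition function. Setting $\xi_{n,x} := e^{\beta_N \omega_{n,x} - \lambda(\beta_N)} - 1$, which has mean zero and variance $\sigma_N^2 \sim \pi/\log N$ by \eqref{eq:sigmaN}, expansion of \eqref{eq:Zx} gives
\begin{equation*}
Z_N^{\beta_N}(z) - 1 = \sum_{k\geq 1} \sum_{\substack{0 < n_1 < \cdots < n_k < N \\ x_1, \ldots, x_k \in \Z^2}} q_{n_1}(x_1 - z) \prod_{i=2}^k q_{n_i - n_{i-1}}(x_i - x_{i-1}) \prod_{i=1}^k \xi_{n_i, x_i} \,.
\end{equation*}
Plugging this into the centered third moment of $Z^{\beta_N}_{Nt}(\phi)$ and using independence of the $\xi$'s, one obtains a triple sum over index sets $(I_1, I_2, I_3)$ in which every spacetime point must appear in $0$, $2$, or $3$ of the sets, since $\bbE[\xi] = 0$.

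\medskip

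The first step will be to isolate the configurations in which all matches are pairwise. A triple match at one spacetime point $(n,x)$ loses a factor of $q_n(0) \sim 1/n$ relative to two pair matches at nearby times, and simultaneously replaces a $\sigma_N^2 \sim 1/\log N$ by the third disorder cumulant; combined with the Riemann-sum normalization this contribution vanishes in the limit. For the remaining pair-matched configurations, the key combinatorial observation will be that, since at any instant only one pair of the three walks can carry matched disorders, the ordered sequence of matched points naturally partitions into $m$ chronologically consecutive \emph{pair-intervals} $[a_i, b_i]$, and two consecutive intervals must share one of the two walks in common. This chain structure is precisely the one appearing in \eqref{eq:ImQfree}: the three choices of which pair of starting points $(z, z', z'')$ undergoes the first meeting give rise to the cyclic symmetrization in \eqref{eq:kernel1}, and at each of the $m-1$ subsequent transitions there are two choices of which walk from $[a_{i-1}, b_{i-1}]$ carries over into $[a_i, b_i]$, producing the factor $2^{m-1}$.

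\medskip

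Within each pair-interval the two matched walks perform a renewal of coincidence events, and this is where the refined local renewal theorem underpinning Theorem \ref{th:variance} will be reused: after scaling $n = Nv$ the sum over coincidence times and locations inside $[Na_i, Nb_i]$ converges to $\pi \, G_\theta(b_i - a_i, y_i - x_i)$, with the constant $\pi$ coming from \eqref{eq:olap} and $G_\theta$ encoding the interaction of the divergent renewal weight with the critical-window shift $\theta$. The Gaussian kernels $g_{a_1/2}(x_1 - z), g_{a_1/2}(x_1 - z'), g_{a_2/2}(x_2 - z'')$ at the start of the chain, and $g_{(a_i - b_{i-1})/2}$, $g_{(a_i - b_{i-2})/2}$ between consecutive intervals, arise from the local central limit theorem \eqref{eq:qas} applied to the free walk segments connecting successive matching events. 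Collecting these limits together with the prefactor $(\sigma_N^2)^m \sim (\pi/\log N)^m$ and the renewal-theoretic normalization reproduces the integral $\cI^{(m)}_{t, \theta}$ exactly.

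\medskip

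The main obstacle will be to justify interchanging $\lim_N$ and $\sum_m$ and to prove finiteness of $M_{t,\theta}$, that is, to control the tail of the series $\sum_m 2^{m-1} \pi^m \cI^{(m)}_{t,\theta}$ uniformly in $N$. Heuristically each block contributes a factor $\pi R_N \sigma_N^2 \to 1$ at criticality, so summability must come from a logarithmic $\theta$-dependent correction. The plan is to bound the $m$-th discrete chaos block by its continuum analogue and to exploit the decay of $G_\theta$ encoded in \eqref{eq:Gtheta}, whose Laplace transform is $1/(\log w - \theta + \gamma)$-like near the origin, together with the spatial Gaussians, to extract a geometric decay in $m$ governed by $\theta$. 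A secondary but delicate technical point will be the uniform Riemann-sum approximation: one needs a dominating integrable envelope for the discrete sums, which the combination of the $g_\cdot$ Gaussians and the integrable tail of $G_\theta$ should supply; this step will require careful uniform bounds analogous to those already developed for Theorem \ref{th:variance}.
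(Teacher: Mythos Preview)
Your overall architecture matches the paper's: polynomial chaos expansion, separation of triple from pairwise matchings, decomposition of the pairwise configurations into $m$ consecutive ``stretches'' labeled by which pair is matched (yielding the $3\cdot 2^{m-1}$ combinatorics), and identification of each stretch with a local renewal quantity converging to $\pi\,G_\theta(b_i-a_i,y_i-x_i)$. That part is essentially the paper's Sections~4--5.

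The gap is in your last paragraph, the summability in $m$. Your heuristic that ``each block contributes $\pi R_N\sigma_N^2\to 1$'' is exactly why the problem is hard, but your proposed cure --- a $\theta$-dependent geometric decay extracted from the Laplace transform of $G_\theta$ --- would not work and is not what the paper does. After integrating out the spatial variables $x_i,y_i$, the product of the two heat kernels meeting at $x_i$ leaves a factor $1/\sqrt{(a_i-b_{i-1})(a_i-b_{i-2})}$, while $G_\theta(v)\sim v^{-1}(\log\tfrac1v)^{-2}$ near $v=0$; the resulting time integral has genuine non-integrable singularities along the diagonal, so there is no ``dominating integrable envelope'' in the naive sense. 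The paper's mechanism (Lemmas~5.2--5.6) is to insert an exponential tilt $e^{-\lambda v_i}$ into each $G_\theta$-block, obtaining $\int_0^1 e^{-\lambda v}G_\theta(v)\,\dd v \le \sfc_\theta/(2+\log\lambda)$, and then to control the remaining iterated integral $\int \prod_i (u_i(u_i+u_{i-1}))^{-1/2}\,\dd\vec u$ by a recursive bound $\phi^{(k)}(v)\le 32^k\sum_{i\le k}\tfrac{1}{i!}(\tfrac12\log\tfrac{e^2}{v})^i$, established via an explicit path-counting argument. Choosing $\lambda=m$ then gives a super-exponential bound $(32\sfc_\theta/\log m)^m$, which is what kills the sum; the decay is \emph{not} geometric and is \emph{not} driven by $\theta$.

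Your treatment of triple intersections is also too light. The paper does not argue pointwise that a triple match ``loses a $1/n$''; instead it factorizes the full expansion around the set of triple-intersection points (relation~(7.1)), so that between consecutive triples one has a centred third moment with no triples, and then feeds in the delicate point-to-point bounds of Lemma~6.1 (which in turn recycle the $\phi^{(k)}$ machinery above) together with $\bbE[\xi^3]=O((\log N)^{-3/2})$. The control of $\rho_N<1$ in (7.3) is what makes the geometric series over the number of triple points converge.
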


The expression \eqref{eq:ImQfree} reflects a key combinatorial structure
which emerges from our renewal framework.
Establishing the convergence of the series in \eqref{eq:kernel1}
is highly
non-trivial, which shows how delicate things become in the critical window.

\smallskip

We remark that
relation \eqref{eq:3rdQfree} holds also for the mixed centered third moment
with different
test functions $\phi^{(1)}, \phi^{(2)}, \phi^{(3)} \in C_c(\R^2)$,
with the same kernel $M_{t,\theta}(z, z', z'')$.
Note that this kernel is invariant under
\emph{any} permutation of its variables, because $\cI_{t,\theta}^{(m)}(z, z', z'')$
is symmetric in $z$ and $z'$ (but not in $z''$,
hence the need of symmetrization in \eqref{eq:kernel1}).

\smallskip

Let us finally come back to the convergence of the
random field $Z_{Nt}^{\beta_N}(x \sqrt{N})$ of diffusively rescaled partition functions.
By averaging with respect to a test function, as in \eqref{eq:Qavgn},
we regard this field
as a \emph{random measure on $\R^2$}. More explicitly, if we define
\be\label{eq:muab}
	\mathscr{Z}^{\beta_N}_{Nt}(\dd x)
	:= \frac{1}{N} \sum_{y\in \frac{1}{\sqrt{N}} \Z^2}
	Z^{\beta_N}_{Nt}\big(y \sqrt{N}\big) \,
	\delta_{y}(\dd x) \,,
\ee
we can write $Z_{Nt}^{\beta_N}(\phi) = \int_{\R^2}
\phi(x) \, \mathscr{Z}^{\beta_N}_{Nt}(\dd x)$, see \eqref{eq:Qavgn}.
Note that $(\mathscr{Z}^{\beta_N}_{Nt})_{N\in\N}$
is a sequence of random variables taking values in $\cM(\R^2)$, the Polish space
of locally finite measures on $\R^2$
with the vague topology (i.e.\
$\nu_n \to \nu$ in $\cM(\R^2)$ if and only if $\int \phi \, \dd \nu_n \to
\int \phi \, \dd \nu$ for any $\phi\in C_c(\R^2)$).
We can make the following remarks.
\begin{itemize}
\item The convergence of the first moment \eqref{eq:CphipsiQfree} implies
tightness of $(\mathscr{Z}^{\beta_N}_{Nt})_{N\in\N}$, see
\cite[Lemma~14.15]{Kal}. This yields the existence of weak subsequential limits:
\begin{equation*}
	\mathscr{Z}^{\beta_N}_{Nt}(\dd x) \xrightarrow[]{\ d \ }
	\bs{\mathcal{Z}}(\dd x) \qquad
	\text{as $N\to\infty$ along a subsequence} \,,
\end{equation*}
where the limit $\bs{\mathcal{Z}}(\dd x) =
\bs{\mathcal{Z}}_{t,\theta}(\dd x)$ can in principle depend
on the subsequence.

\item The convergence of the  second moment \eqref{eq:varavgQfree}
implies uniform integrability of $Z_{Nt}^{\beta_N}(\phi)$. It follows that
any subsequential limit $\bs{\mathcal{Z}}(\dd x)$ has mean
measure given by Lebesgue measure: $\bbE\big[ \int_{\R^2} \phi(x) \, \bs{\mathcal{Z}}(\dd x)
\big] = \int \phi(x) \, \dd x$.
Moreover, by \eqref{eq:varavgQfree} and Fatou's Lemma,
\be\label{eq:varFatou}
	\bbvar\bigg[
	\int_{\R^2} \phi(x) \, \bs{\mathcal{Z}}(\dd x)
	\bigg] \,\leq\, \int_{\R^2 \times \R^2} \phi(z) \, \phi(z') \,
	K_{t, \theta}(z-z') \, \dd z \, \dd z' \, < \, \infty \,.
\ee
However, this does not rule out that the variance
in \eqref{eq:varFatou} might actually vanish,
in which case the limit $\bs{\mathcal{Z}}(\dd x)$ would just be
the trivial Lebesgue measure.

\item \emph{The convergence of the third moment \eqref{eq:3rdQfree} rules out
this triviality}. Indeed, \eqref{eq:3rdQfree} implies that
$\bbE[|Z_{Nt}^{\beta_N}(\phi)|^3] \le
\bbE [Z_{Nt}^{\beta_N}(|\phi|)^3]$ is bounded, so
the squares $Z^{\beta_N}_{Nt}(\phi)^2$ are uniformly integrable
and the inequality in \eqref{eq:varFatou} is actually an equality.
\end{itemize}
We can combine the previous considerations in the following result.

\begin{theorem}\label{th:field}
Let $t > 0$, $\theta \in \R$. Let $\beta_N$ satisfy \eqref{eq:sigmaN}.
The random measures $(\mathscr{Z}^{\beta_N}_{Nt}(\dd x) )_{N\in\N}$
in \eqref{eq:muab} admit weak subsequential limits
$\bs{\mathcal{Z}}_{t,\theta}(\dd x)$, and any such limit satisfies
\begin{align}
	\bbE\bigg[ \int_{\R^2} \phi(x) \, \bs{\mathcal{Z}}_{t,\theta}(\dd x)
	\bigg] &\,=\, \int \phi(x) \, \dd x
	\label{eq:nutheta1}\\
	\bbvar\bigg[ \int_{\R^2} \phi(x) \, \bs{\mathcal{Z}}_{t,\theta}(\dd x)
	\bigg]
	& \,=\, \int_{\R^2 \times \R^2} \phi(z) \, \phi(z') \,
	K_{t, \theta}(z-z') \, \dd z \, \dd z' \, \label{eq:nutheta2}\\
	\bbE\bigg[ \, \bigg| \int_{\R^2} \phi(x) \,
	\bs{\mathcal{Z}}_{t,\theta}(\dd x) \bigg|^3 \,
	\bigg]
	& \,<\,\infty \,. \label{eq:nutheta3}
\end{align}
\end{theorem}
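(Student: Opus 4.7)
The plan is to follow the roadmap sketched in the bulleted discussion immediately preceding the statement, so the proof reduces to combining the convergence of the first moment \eqref{eq:CphipsiQfree} with Theorems~\ref{th:variance} and~\ref{th:3rdmom}; once these are granted, no further probabilistic input is required.

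First I would establish tightness of the family $(\mathscr{Z}^{\beta_N}_{Nt})_{N\in\N}$ in the Polish space $\cM(\R^2)$. By \cite[Lemma~14.15]{Kal}, it suffices to show that for every non-negative $\phi\in C_c(\R^2)$ the real random variables $Z^{\beta_N}_{Nt}(\phi)$ form a tight family, which follows at once from Markov's inequality and the boundedness of $\bbE[Z^{\beta_N}_{Nt}(\phi)]$ provided by \eqref{eq:CphipsiQfree}. Along any subsequence $N_k \to \infty$ on which $\mathscr{Z}^{\beta_{N_k}}_{N_k t}$ converges vaguely in distribution to some $\bs{\mathcal{Z}}_{t,\theta}$, the continuous mapping theorem gives $Z^{\beta_{N_k}}_{N_k t}(\phi) \Rightarrow \int \phi\, \dd \bs{\mathcal{Z}}_{t,\theta}$ for every $\phi\in C_c(\R^2)$, and it only remains to identify the first three moments of this limit.

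To prove \eqref{eq:nutheta1} I would use that the second moment bound from Theorem~\ref{th:variance} renders $(Z^{\beta_{N_k}}_{N_k t}(\phi))_{k\in\N}$ uniformly integrable; combined with the weak convergence just established, this upgrades \eqref{eq:CphipsiQfree} into convergence of expectations. For \eqref{eq:nutheta2} I would apply the analogous argument to the squares. The crucial ingredient is the uniform bound
\begin{equation*}
	\sup_N \bbE\big[|Z^{\beta_N}_{Nt}(\phi)|^3\big] \,\le\, \sup_N \bbE\big[Z^{\beta_N}_{Nt}(|\phi|)^3\big] \,<\, \infty,
\end{equation*}
which follows from Theorem~\ref{th:3rdmom} applied to $|\phi|$ by expanding $Z^{\beta_N}_{Nt}(|\phi|)^3$ binomially around its mean and controlling the resulting first and second moment terms via \eqref{eq:CphipsiQfree} and Theorem~\ref{th:variance}. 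A uniform $L^3$ bound implies uniform integrability of the squares $Z^{\beta_{N_k}}_{N_k t}(\phi)^2$, which upgrades the a priori Fatou inequality \eqref{eq:varFatou} to the equality \eqref{eq:nutheta2}. Finally, \eqref{eq:nutheta3} is a direct application of Fatou's lemma together with this same uniform $L^3$ bound.

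The main conceptual point, already flagged in the third bullet preceding the statement, is to rule out the possibility that the subsequential limit $\bs{\mathcal{Z}}_{t,\theta}$ collapses to deterministic Lebesgue measure: the uniform bound on third moments coming from Theorem~\ref{th:3rdmom} is precisely what forces the inequality in \eqref{eq:varFatou} to be an equality and thus makes $K_{t,\theta}$ a genuine covariance kernel for the fluctuations of $\bs{\mathcal{Z}}_{t,\theta}$. All other steps reduce to standard Polish-space and uniform-integrability considerations.
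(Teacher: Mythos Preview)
Your proposal is correct and follows exactly the approach the paper takes: the bulleted discussion preceding the theorem \emph{is} the paper's proof, and you have faithfully reproduced it, including the use of \cite[Lemma~14.15]{Kal} for tightness, uniform integrability from the second moment for \eqref{eq:nutheta1}, and the $L^3$ bound via Theorem~\ref{th:3rdmom} applied to $|\phi|$ to upgrade the Fatou inequality to the equality \eqref{eq:nutheta2}. Your added remark that the raw third moment of $Z^{\beta_N}_{Nt}(|\phi|)$ is controlled by combining the centered third moment from Theorem~\ref{th:3rdmom} with the lower-moment bounds is a correct elaboration that the paper leaves implicit.
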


In particular, every weak subsequential limit
$\bs{\mathcal{Z}}_{t,\theta}(\dd x)$ is a random measure
with the same covariance structure. It is natural to conjecture that
the whole sequence
$(\mathscr{Z}^{\beta_N}_{Nt}(\dd x) )_{N\in\N}$ has a weak limit,
but this remains to be proved.

\smallskip

We conclude with a remark on \emph{intermittency}.
As the asymptotics behavior \eqref{corr_ker} suggests,
when we fix the starting point
of the partition function instead of
averaging over it, i.e.\ we consider $Z_N^{\beta_N}$
defined in \eqref{eq:Z},
the second moment blows up like $\log N$.
More precisely, in \cite[Proposition~A.1]{CSZ18} we have
shown that as $N\to\infty$
\begin{equation} \label{eq:inter2} 	
	\bbE\big[ (Z_{N}^{\beta_N})^2 \big]
	\sim c \,(\log N) \,, \qquad
	\text{with} \qquad c = \textstyle \int_0^1 G_{\theta}(t) \, \dd t \,.
\end{equation}
This is a signature of intermittency, because it shows that
$\bbE\big[ (Z_{N}^{\beta_N})^2 \big]
\gg \bbE[Z_N^{\beta_N}]^2 = 1$.
It also implies that for any $q \ge 2$ we have the bound
\begin{equation} \label{eq:inter2b} 	
	\bbE\big[ (Z_{N}^{\beta_N})^q \big]
	\ge c' \,(\log N)^{q-1} \,.
\end{equation}
Indeed, since
$\bbE[Z_N^{\beta_N}] = 1$,
we can introduce the size-biased probability $\bbP^*(A) :=
\bbE[ \ind_A \, Z_N^{\beta_N}]$ and note that
$\bbE\big[ (Z_{N}^{\beta_N})^q \big] = \bbE^*\big[ (Z_{N}^{\beta_N})^{q-1} \big]
\ge \bbE^*\big[ Z_{N}^{\beta_N} \big]^{q-1}
= \bbE\big[ (Z_{N}^{\beta_N})^2 \big]^{q-1}$ by Jensen.

\begin{remark}
We formulated our results only for the directed polymer on $\Z^{2+1}$, but
our techniques carry
through for other marginally relevant directed polymer type models,
such as the disordered pinning
model with tail exponent $1/2$, and the directed polymer on $\Z^{1+1}$
with Cauchy tails (see~\cite{CSZ17b}).
\end{remark}

\subsection{The $2d$ stochastic heat equation}
\label{sec:she}
An analogue of Theorem~\ref{th:variance} for the stochastic heat
equation (SHE) in $\R^2$
was proved by Bertini and Cancrini in
\cite{BC98}, although they did not obtain the analogue of Theorem \ref{th:3rdmom}. We
formulate these results next.

\smallskip

The SHE as written in \eqref{eq:SHE} is ill-posed due to the product $\dot{W}\cdot u$.
To make sense of it, we mollify the space-time white noise $\dot{W}$ in the
space variable. Let
$j\in C^\infty_c(\R^2)$ be a probability density on $\R^2$ with $j(x)=j(-x)$,
and let
$$
J := j*j.
$$
For $\epsilon>0$, let $j_\epsilon(x) := \epsilon^{-2} j(x/\epsilon)$.
Then the space-mollified noise $\dot{W}^\epsilon$ is defined
by $\dot{W}^\epsilon(t, x)  := \int_{\R^2} j_\epsilon(x-y) \dot{W}(t, y)\dd y$. We
consider the mollified equation
\begin{equation}\label{eq:2dSHEeps}
	\frac{\partial u^\eps}{\partial t} = \frac{1}{2}\Delta u^\eps + \beta_\eps
	\, u^\eps \,
	\dot{W}^\eps \,, \qquad u^\eps(0, x)= 1 \ \ \forall\, x\in \R^2 \,,
\end{equation}
which admits a unique mild solution (with Ito integration).

It was shown in \cite{CSZ17b} that if we rescale
$\beta_\eps:= \hbeta \sqrt{\frac{2\pi}{\log \eps^{-1}}}$, then
for any fixed $(t,x)\in \R^+\times\R^2$
the mollified solution $u^\eps(t,x)$ converges in distribution as $\epsilon \to 0$
to the same limit as in \eqref{eq:Zlim} for the directed polymer partition function,
with $\hbeta_c=1$ being the critical point.

In \cite{BC98}, Bertini and
Cancrini considered the critical window around $\hat\beta_c =1$ given by
\be\label{eq:betaeps2}
	\beta_\eps^2 = \frac{2\pi}{\log \frac{1}{\eps}}
	+ \frac{\rho+o(1)}{(\log \frac{1}{\eps})^2} \,, \qquad \text{with } \rho \in \R \,.
\ee
This is comparable to our choice of $\beta_N$,
see \eqref{eq:sigmaN} and \eqref{choose_beta}, if we make the
identification $\eps^2=1/N$ (note that the third cumulant $\kappa_3=0$ for Gaussian random
variables). In this critical window, $u^\eps(t,x)$ converges to $0$ in distribution, while
its expectation is constant:
\begin{equation}\label{eq:SHE1stmom}
	\bbE\bigg[\int_{\R^2} \phi(x) u^\eps(t, x)\dd x\bigg] \ \equiv \
	\int_{\R^2} \phi(x) \, \dd x \,.
\end{equation}
Bertini and Cancrini showed that when interpreted as
a random distribution on $\R^2$, $u^\eps(t,\cdot)$ admits subsequential weak limits, and they
computed the limiting covariance.
This is the analogue of our Theorem~\ref{th:variance}, which we now
state explicitly.
Let us set
\begin{equation*}
	u^\epsilon(t,\phi):=\int_{\R^2} \phi(x) \, u^\epsilon(t,x)\,\dd x \,,
	\qquad \text{for } \phi \in C_c(\R^2) \,.
\end{equation*}

\begin{theorem}[\cite{BC98}]\label{th:SHEvariance}
Let $\beta_\eps$ be chosen as in \eqref{eq:betaeps2}.
Then, for any $\phi\in C_c(\R^2)$,
\begin{align}
	\label{eq:SHE2ndmom}
	\lim_{\eps \to 0^+} \bbvar\big[ u^\epsilon(t,\phi) \big] \ = \
	& 2 \, \int_{\R^2 \times \R^2} \phi(z) \, \phi(z') \,
	K_{t, \theta}\big(\tfrac{z-z'}{\sqrt 2}\big) \, \dd z \, \dd z' \,,
\end{align}
where $K_{t, \theta}$ is defined as in Theorem \ref{th:variance}, with
\be\label{formula_theta}
  \theta =  \log 4 +2\int_{\R^2}\int_{\R^2} J(x) \log \frac{1}{|x-y|} \,J(y)\,\dd x \dd y -\gamma+\frac{\rho}{\pi}.
\ee
\end{theorem}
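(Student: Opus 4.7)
The plan is to mirror the proof of Theorem~\ref{th:variance}, transposed from the lattice to the continuum. The starting point is the Wick Feynman--Kac identity for the two-point correlation of the mild solution,
\[
\bbE\bigl[u^\epsilon(t,x)\,u^\epsilon(t,y)\bigr]
\,=\,E^{B,B'}_{x,y}\!\Big[\exp\!\Big(\beta_\epsilon^2\int_0^t J_\epsilon(B_s-B'_s)\,\dd s\Big)\Big],
\]
with $B,B'$ independent planar Brownian motions, which follows directly from $\bbE[\dot W^\epsilon(s,u)\dot W^\epsilon(s',v)]=\delta(s-s')J_\epsilon(u-v)$. Subtracting the mean, expanding the exponential in $\beta_\epsilon^2$, and applying the Markov property to the difference process $B-B'$ (whose transition density is $g_{2s}$), I would rewrite
\[
\bbvar\bigl[u^\epsilon(t,\phi)\bigr]\,=\,\sum_{k\ge 1}\beta_\epsilon^{2k}\,\mathcal T_k^\epsilon(\phi,t),
\]
\[
\mathcal T_k^\epsilon(\phi,t):=\idotsint_{0<s_1<\cdots<s_k<t}\psi(w)\,g_{2s_1}(u_1-w)\prod_{i=2}^{k}g_{2(s_i-s_{i-1})}(u_i-u_{i-1})\prod_{i=1}^{k}J_\epsilon(u_i)\,\dd w\,\dd\vec u\,\dd\vec s,
\]
where $\psi(w):=\int\phi(x)\phi(x-w)\,\dd x$. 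This is the continuum analogue of the polynomial chaos expansion underlying Theorem~\ref{th:variance}, under the dictionary $\epsilon^2\leftrightarrow 1/N$, $g_{2s}\leftrightarrow q_{2n}$, and $J_\epsilon\leftrightarrow$ a point mass.

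The critical quantity, playing the role of $\sigma_N^2 R_N$, is the single-epoch weight $\beta_\epsilon^2\int_0^t(J_\epsilon*g_{2s})(0)\,\dd s$. Using $J_\epsilon(u)=\epsilon^{-2}J(u/\epsilon)$ and the exponential-integral identity
\[
\int_0^t\frac{e^{-\epsilon^2|v|^2/(4s)}}{4\pi s}\,\dd s\,=\,\frac{1}{4\pi}\,E_1\!\Big(\tfrac{\epsilon^2|v|^2}{4t}\Big)\,=\,\frac{2\log(1/\epsilon)-2\log|v|+\log(4t)-\gamma}{4\pi}+O(\epsilon^2|v|^2),
\]
I would integrate against $J(v)\,\dd v$ and combine with \eqref{eq:betaeps2} to obtain
\[
\beta_\epsilon^2\int_0^t(J_\epsilon*g_{2s})(0)\,\dd s\,=\,1+\frac{\theta(t)+o(1)}{2\log(1/\epsilon)},
\]
where $\theta(t)$ collects the $\rho/\pi$ contribution from the second-order correction in $\beta_\epsilon^2$, the deterministic piece $\log(4t)-\gamma$, and the mollifier-dependent term $-2\!\int J(v)\log|v|\,\dd v$ (recast as the double integral appearing in \eqref{formula_theta} via $J=j*j$ and the symmetry of $j$). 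Matching this with \eqref{eq:sigmaN} under the dictionary $\log N\leftrightarrow 2\log(1/\epsilon)$, together with the rescaling $\theta_t=\theta+\log t$ of Remark~\ref{rm:ZNscaling}, yields formula~\eqref{formula_theta}.

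With the renewal asymptotics in hand, the same local-renewal arguments developed for Theorem~\ref{th:variance} transfer to the continuum and give $\mathcal T_k^\epsilon(\phi,t)\to\mathcal T_k^{(\theta)}(\phi,t)$ for every fixed $k$, whose limits sum to the right-hand side of \eqref{eq:SHE2ndmom}. The overall factor of $2$ and the argument $(z-z')/\sqrt 2$ in \eqref{eq:SHE2ndmom} arise from the change of variables $B-B'=\sqrt 2\,\widetilde B$ which identifies the heat kernel $g_{2s}$ of the difference process with the kernel $g_{s/2}$ implicit in $K_{t,\theta}$ of \eqref{eq:Kt}. The main technical obstacle I expect is not the term-by-term convergence of the $\mathcal T_k^\epsilon$, but rather producing a uniform (in $\epsilon$) summable bound on $\beta_\epsilon^{2k}\mathcal T_k^\epsilon(\phi,t)$ that legitimizes passing $\epsilon\to 0$ inside the infinite series; this requires an $\epsilon$-uniform convolution estimate of the form $(J_\epsilon*g_{2s})(0)\le c/s$ for $s\gtrsim\epsilon^2$ together with a continuum version of the local renewal theorem, exactly as in the lattice case.
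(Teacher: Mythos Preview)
Your overall strategy---expand the second moment as a $\beta_\epsilon^2$-series, recognize a renewal structure in the difference variable, and read off $\theta$ from the single-step mass---is exactly the paper's route. But your identification of the single-epoch weight is wrong, and this propagates to the wrong value of $\theta$. In your expansion the step from $u_{i-1}$ to $u_i$ carries weight $g_{2\Delta s}(u_i-u_{i-1})J_\epsilon(u_i)$, and the starting point $u_{i-1}$ is \emph{not} at the origin: it lies in $\mathrm{supp}(J_\epsilon)$, with effective weight $J_\epsilon(u_{i-1})$ inherited from the previous step. The correct renewal mass is therefore
\[
\beta_\epsilon^2\int_0^{t}\!\!\int_{\R^2}\!\!\int_{\R^2} J_\epsilon(u)\,g_{2s}(u-u')\,J_\epsilon(u')\,\dd u\,\dd u'\,\dd s,
\]
which after rescaling is $\beta_\epsilon^2\int_0^{\epsilon^{-2}t}\langle J,g_{2\tau}J\rangle\,\dd\tau$ and yields the double-$J$ integral in \eqref{formula_theta}. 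Your quantity $(J_\epsilon*g_{2s})(0)$ corresponds to pinning $u_{i-1}=0$; carrying it through gives $2\int J(v)\log\tfrac{1}{|v|}\,\dd v$, and your ``recast via $J=j*j$'' produces $2\int\!\!\int j(x)j(y)\log\tfrac{1}{|x-y|}\,\dd x\,\dd y$, not the required $2\int\!\!\int J(x)J(y)\log\tfrac{1}{|x-y|}\,\dd x\,\dd y$. These differ in general, so your $\theta$ is off by a mollifier-dependent constant.

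There is a second, related gap. The spatial variables $u_1,\dots,u_k$ in your $\mathcal T_k^\epsilon$ are coupled through the kernels $g_{2\Delta s}(u_i-u_{i-1})$, and this coupling is \emph{not} negligible on short time increments $\Delta s\lesssim\epsilon^2$ (precisely where the constant $\theta$ is determined). The paper decouples space from time by restricting to configurations with no two consecutive short increments (a set of full asymptotic mass), so that each short increment is flanked by long ones and both of its endpoints can be integrated against $J$; this is what forces the double-$J$ kernel $r(t)=\langle J,g_{2t}J\rangle$ to emerge. Your proposal does not address this decoupling, and the statement that ``$\mathcal T_k^\epsilon\to\mathcal T_k^{(\theta)}$ for every fixed $k$'' is not the right mode of convergence: each $\beta_\epsilon^{2k}\mathcal T_k^\epsilon$ individually vanishes, and the limit comes from the entire sum (with $\sim\log\epsilon^{-2}$ effective terms) via the Dickman-subordinator renewal theorem, not term by term.
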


In Section \ref{sec:SHE} we provide an independent proof of
Theorem \ref{th:SHEvariance}, which employs the renewal framework
of this paper.
Note that, by Feynman-Kac formula, the mollified solution $u^\epsilon(t,\phi)$
can be interpreted as the partition function of
a continuum directed polymer model.

\begin{remark}
The  covariance kernel
in \eqref{eq:SHE2ndmom} coincides with the one in \cite[eq. (3.14)]{BC98},
provided we identify the parameter $\beta$
in \cite{BC98} with
$e^{\theta - \gamma}$. If we plug $\beta = e^{\theta - \gamma}$ into \cite[eq. (2.6)]{BC98},
with $\theta$ given by \eqref{formula_theta}, we obtain precisely \eqref{eq:betaeps2}.
\end{remark}

Our renewal framework leads to analogues of Theorems~\ref{th:3rdmom}
and~\ref{th:field} for the SHE. For simplicity, we content ourselves
with showing that the third moment is bounded, but the same techniques
would allow to compute its sharp asymptotic behavior, as
in \eqref{eq:3rdQfree}-\eqref{eq:ImQfree}.

\begin{theorem}\label{th:SHE3rdmom}
Follow the same assumptions and notation as in Theorem \ref{th:SHEvariance}.
Then
\begin{align*}
	\sup_{\epsilon>0} \, \E \Big[ \big(u^\epsilon(t,\phi) -
	{\textstyle\int_{\mathbb{R}^2}} \phi(x)\,\dd x \big)^3 \Big] <\infty.
\end{align*}
If $\bs{u}_\theta(t,\cdot)$ is any subsequential weak limit
in $\cM(\R^2)$ of
$u^\eps(t, \cdot)$ as $\eps\to 0^+$, then
$\bs{u}_\theta(t,\cdot)$ satisfies
the analogues of \eqref{eq:nutheta1}--\eqref{eq:nutheta3},
with $K_{t, \theta}(z-z')$ in \eqref{eq:nutheta2} replaced by
 $2K_{t, \theta}\big(\frac{z-z'}{\sqrt 2}\big)$.
\end{theorem}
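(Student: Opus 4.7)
The plan is to adapt the proof of Theorem~\ref{th:3rdmom} to the continuum setting via the Wiener chaos expansion of the mild solution to~\eqref{eq:2dSHEeps}, writing
\[
u^\epsilon(t,\phi) - \int_{\R^2}\phi(x)\,\dd x \,=\, \sum_{k\ge 1} \beta_\epsilon^k \, I_k\big(\psi_{t,\phi,\epsilon}^{(k)}\big),
\]
where $I_k$ is the $k$-fold Wiener--It\^o integral against $\dot W$ and the symmetric kernels $\psi_{t,\phi,\epsilon}^{(k)}$ are built from the Brownian heat kernel $g_{s}$, the mollifier $j_\epsilon$, and $\phi$. This decomposition plays the same role as the polynomial chaos expansion of $Z_{Nt}^{\beta_N}(\phi)$ in the proof of Theorem~\ref{th:3rdmom}, with $q_n(\cdot)$ replaced by $g_s(\cdot)$ and the disorder variance $\sigma_N^{2}\mathbf 1_{x=x'}$ replaced by the pairwise interaction $\beta_\epsilon^{2}J_\epsilon(x-x')$ between replicas.

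Cubing and taking expectation, I would invoke the diagram (product) formula for multiple Wiener integrals to obtain
\[
\bbE\Big[\big(u^\epsilon(t,\phi) - \bbE[u^\epsilon(t,\phi)]\big)^{3}\Big]
\,=\, \sum_{k_{1},k_{2},k_{3}\ge 1} \beta_{\epsilon}^{k_1+k_2+k_3} \sum_{\pi}\big\langle \psi_{k_{1}}^{\epsilon},\psi_{k_{2}}^{\epsilon},\psi_{k_{3}}^{\epsilon}\big\rangle_{\pi},
\]
where $\pi$ runs over perfect matchings of the $k_{1}+k_{2}+k_{3}$ integration variables with each pair connecting two \emph{different} replicas. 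Parametrizing $\pi$ by the numbers $m_{12},m_{13},m_{23}$ of cross-pairings, ordering events chronologically and grouping consecutive same-type pairings into \emph{interaction blocks} separated by free Brownian propagation, would produce an expansion of the same combinatorial shape as~\eqref{eq:kernel1}--\eqref{eq:ImQfree}. Each block would now involve an $\epsilon$-dependent kernel $G_{\theta}^{\epsilon}$ encoding two planar Brownian motions interacting through $\beta_\epsilon^{2}J_\epsilon$; its convergence to $G_{\theta}$ as $\epsilon\to 0^+$ is precisely the continuum renewal statement that underlies the proof of Theorem~\ref{th:SHEvariance} given in Section~\ref{sec:she}, with the explicit constant~\eqref{formula_theta} arising from matching~\eqref{eq:betaeps2} against the renewal normalization.

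The main obstacle is this continuum renewal statement: unlike in the discrete case, where $\sigma_N^{2}R_N\sim 1$ turns the overlap of two walks into a genuine renewal process, in $d=2$ two independent Brownian motions never meet, and the interaction is a regularized intersection local time; one must show that under $\beta_\epsilon^{2}\sim 2\pi/\log\epsilon^{-1}$ the kernel $\beta_\epsilon^{2}\int \!\!\int J_\epsilon(\cdot)\, g_{u}(\cdot)\,\dd u$ matches the discrete renewal normalization to the correct order, uniformly over the range of block parameters appearing in~\eqref{eq:ImQfree}. Once this is done, the analytic estimates on $\cI_{t,\theta}^{(m)}$ from the proof of Theorem~\ref{th:3rdmom} apply as a black box (they are phrased purely in terms of Gaussian kernels and $G_{\theta}$) to yield $\sup_{\epsilon>0}\bbE[(u^\epsilon(t,\phi) - \bbE[u^\epsilon(t,\phi)])^{3}]<\infty$. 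Given the third-moment bound, tightness of $(u^\epsilon(t,\cdot))_{\epsilon>0}$ in $\cM(\R^{2})$ follows from~\eqref{eq:SHE1stmom}, and the identities for any weak subsequential limit $\bs u_{\theta}(t,\cdot)$ follow exactly as in the discussion preceding Theorem~\ref{th:field}: the analogues of~\eqref{eq:nutheta1}--\eqref{eq:nutheta2} come from first- and second-moment convergence (Theorem~\ref{th:SHEvariance} upgraded to convergence of variances via uniform integrability of squares), and the analogue of~\eqref{eq:nutheta3} is Fatou applied to the third-moment bound.
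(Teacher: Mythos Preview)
Your proposal is correct and follows essentially the same route as the paper: Wiener chaos expansion, pairwise-matching (stretch) decomposition of the cubed expression, a continuum renewal estimate for each stretch that plays the role of $U_N$, and then reduction to the same $J^{(m)}$-type bounds already established in the discrete case. One point worth making explicit is that in the Gaussian setting triple matchings contribute nothing (the third moment of a centered Gaussian vanishes), so the whole triple-intersection analysis of Sections~\ref{sec:furtherbds}--\ref{sec:triple} is simply not needed here; this is a simplification, not an obstacle.
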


\subsection{Outline of the proof strategy}
\label{sec:outline}

We present the key ideas of our approach. First we
compute the \emph{second moment} of the partition function,
sketching the proof of \eqref{eq:inter2}. Then we describe
the combinatorial structure of the \emph{third moment}, which
leads to Theorem~\ref{th:3rdmom}.
This illustrates how \emph{renewal theory} emerges
in our problem.

\medskip
\noindent
\emph{Second moment.}
We start from a {\it polynomial chaos expansion}
of the partition function $Z_{N}^{\gb}$, which arises from a binomial
expansion of the exponential in
\eqref{eq:Z}
(see Subsection~\ref{sec:poly}):
\begin{equation}\label{eq:polydecom}
\begin{split}
	Z_{N}^{\gb}  =  1
	\,+\, \sum_{k\ge 1} \, \sumtwo{0 < n_1 < \ldots < n_k \le N}{x_1, \ldots, x_k \in \Z^2}
	q_{n_1}(x_1) \, \xi_{n_1,x_1} \, \cdot & \,
	q_{n_2 - n_1}(x_2 - x_1) \, \xi_{n_2,x_2} \, \cdot \\
	& \qquad \cdot \,\ldots \, \cdot \, q_{n_k - n_{k-1}}(x_k - x_{k-1}) \, \xi_{n_k,x_k} \,,
\end{split}
\end{equation}
where we set $\xi_{n,x} = e^{\beta_N\go_{n,x} -\lambda(\beta_N)}-1$ for $n\in \N,x\in\Z^2$.
Note that $\xi_{n,x}$ are i.i.d.\ with mean zero and variance
$\sigma^2 = e^{\lambda(2\beta)-2\lambda(\beta)}-1$, see \eqref{eq:sigmaN}. Then
\begin{align}
	\notag
	\bbvar[Z_{N}^\beta]
	& =  \sum_{k\ge 1} \, (\sigma^2)^k
	\sumtwo{0 < n_1 < \ldots < n_k \le N}{x_1, \ldots, x_k \in \Z^2}
	q_{n_1}(x_1)^2 \, \cdot \,
	q_{n_2 - n_1}(x_2 - x_1)^2  \, \cdots \,
	q_{n_k - n_{k-1}}(x_k - x_{k-1})^2 \\
	\label{eq:quasi-ren}
	&=  \sum_{k\ge 1} \, (\sigma^2)^k \sum_{0 < n_1 < \ldots < n_k \le N}
	u_{n_1}^2 \, \cdot \,
	u_{n_2 - n_1}^2 \, \cdots \,
	u_{n_k - n_{k-1}}^2 \,,
\end{align}
where we define
\begin{equation} \label{eq:un}
	u_{n}^2 := \sum_{x\in\Z^2} q_n(x)^2 = q_{2n}(0) = \frac{1}{\pi n}
	\, + \, O\bigg(\frac{1}{n^2}\bigg) \,.
\end{equation}
Incidentally, \eqref{eq:quasi-ren} coincides with the variance of the partition
function of the one-dimensional
disordered pinning model based on the simple random walk on $\Z$
\cite{CSZ18}.

The key idea is to view the series of convolutions \eqref{eq:quasi-ren}
through the lenses of \emph{renewal theory}.
The sequence $u_n^2$ is not summable, but we can normalize it to a probability
on $\{1,\ldots, N\}$.
We thus define a triangular array of independent random variables
$(T_i^{(N)})_{i\in \N}$ by
\begin{align} \label{eq:XN}
	\P\big(T_i^{(N)}=n\big)
	= \frac{1}{R_N} \, u_n^2 \,\ind_{\{1\leq n \leq N\}}  \,,
	\qquad \text{where} \qquad R_N:=
	\sum_{n=1}^N u_n^2 \,.
\end{align}
We stress that $R_N = \frac{1}{\pi} \log N  + O(1)$ is the same as in \eqref{eq:olap}.
If we fix $\beta_N$ satisfying \eqref{eq:sigmaN},
and define the renewal process
\begin{equation}\label{eq:tau}
	\tau^{(N)}_k = T^{(N)}_1 + \ldots + T^{(N)}_k \,,
\end{equation}
we can rewrite \eqref{eq:quasi-ren} for $\beta = \beta_N$ as follows:
\begin{equation}\label{eq:quasi-ren2}
	\bbvar\big[ Z_{N}^{\beta_N} \big] = \sum_{k\ge 1}
	\big( \sigma_N^2 \, R_N \big)^k \, \P\big( \tau^{(N)}_k \le N \big)
	= \sum_{k\ge 1} e^{\theta \frac{k}{\log N}
	+ O\left(\frac{k}{(\log N)^2}\right)}
	\, \P\big( \tau^{(N)}_k \le N \big) \,.
\end{equation}
This shows that $\bbvar\big[ Z_{N}^{\beta_N} \big]$
can be interpreted
as a (weighted) \emph{renewal function} for $\tau_k^{(N)}$.

\smallskip

The renewal process $\tau_k^{(N)}$ is investigated in
\cite{CSZ18}, where we proved that
$(\tau_{\lfloor s \log N \rfloor}^{(N)}/N)_{s \ge 0}$ converges
in law as $N\to\infty$ to a special L\'evy process $Y = (Y_s)_{s \ge 0}$,
called the \emph{Dickman subordinator},
which admits an explicit density:
\begin{align} \label{eq:fst}
	f_s(t) := \frac{\P(Y_s \in \dd t)}{\dd t}
	=\frac{e^{-\gamma s} \, s \, t^{s-1}}{\Gamma(s+1)} \qquad
	\text{for } t \in (0,1) \,.
\end{align}
Then $\P(\tau_{\lfloor s \log N \rfloor}^{(N)} \le N)
\to \P(Y_s \le 1) = \int_0^1 f_s(t) \, \dd t$, and by
Riemann sum approximation
\begin{equation*}
	\lim_{N\to\infty} \frac{\bbvar\big[ Z_{N}^{\beta_N} \big]}{\log N}
	\,=\, \int_0^1 \dd t \,
	\bigg( \int_0^\infty \dd s \, e^{\theta s} \, f_s(t) \bigg)
	\,=\, \int_0^1 \dd t \, G_{\theta}(t) \,,
\end{equation*}
where $G_\theta(\cdot)$ is the same as in \eqref{eq:Gtheta},
which can now be interpreted as a renewal function for the L\'evy process $Y$.
This completes the derivation of \eqref{eq:inter2}.

Similar arguments can be applied
to the partition function $Z_N^{\beta_N}(\phi)$ averaged over
the starting point, to prove Theorem~\ref{th:variance} using renewal theory.

\medskip

\noindent\emph{Third moment.}
The proof of Theorem~\ref{th:3rdmom} is more challenging.
In the second moment computation, the
spatial variables $x_1, \ldots, x_k$ have
been summed over to get \eqref{eq:quasi-ren},
reducing the analysis to a one-dimensional
renewal process.
Such a reduction is not possible for Theorem~\ref{th:3rdmom}.
In addition to the
``point-to-plane'' partition functions \eqref{eq:Z}-\eqref{eq:Zx},
it will be important to consider \emph{point-to-point partition functions},
where we also fix the endpoint $S_N$:
\begin{equation*}
	Z_N^{\beta}(0,y)
	:= \E_0\Big[e^{\sum_{n=1}^{N-1} (\beta\omega_{n, S_n} -\lambda(\beta))}
	\, \ind_{\{S_N = y\}}\Big] \,.
\end{equation*}
We need to extend our renewal theory framework,
enriching the process $\tau^{(N)}_k$
with a spatial component $S^{(N)}_k$
(see \eqref{eq:bsXN}-\eqref{eq:tauS} below).
This will yield the following analogue of \eqref{eq:quasi-ren2}:
\begin{equation}\label{eq:quasi-ren3}
	\sigma_N^2 \, \bbE\big[ Z_{M}^{\beta_N}(0,y)^2 \big] = \sum_{k\ge 1}
	\big( \sigma_N^2 \, R_N \big)^k \, \P\big( \tau^{(N)}_k = M,
	S^{(N)}_k = y \big) \,,
\end{equation}
which is now a \emph{local} (weighted) renewal function for the
random walk $(\tau^{(N)}_k, S^{(N)}_k)_{k\ge 0}$.
Its asymptotic behavior as $N\to\infty$ was determined in
\cite{CSZ18}:
\begin{equation}\label{eq:aszptp}
	\sigma_N^2 \, \bbE\big[ Z_{M}^{\beta_N}(0,y)^2 \big]
	\sim \, \frac{\log N}{N^2} \, G_{\theta}
	\big( \tfrac{M}{N}, \tfrac{y}{\sqrt{N}} \big) \,,
\end{equation}
where $G_\theta(t,z)$, defined in \eqref{eq:Gthetaux}, is
a continuum local renewal function.

\medskip

We now explain how the second moment of the point-to-point
partition function \eqref{eq:quasi-ren3} enters in the third moment computation.
We consider the partition function $Z_N^{\beta}$ started
at the origin, see \eqref{eq:Z},
but everything extends to the averaged partition
function $Z_N^{\beta}(\phi)$.

We compute $\bbE[(Z_N^{\beta} - 1)^3]$ using the expansion
\eqref{eq:polydecom}. This leads to a sum
over \emph{three sets of coordinates} $(n^a_i,x^a_i)$,
$(n^b_j,x^b_j)$, $(n^c_l,x^c_l)$,
with associated random variables $\xi_{n,x}$, say
\begin{equation}\label{eq:30}
	\bbE[(Z_N^{\beta} - 1)^3] =
	\sumthree{k^a \ge 1}{k^b \ge 1}{k^c \ge 1} \,
	\sumthree{(n^a_i,x^a_i)_{i=1, \ldots, k^a}}
	{(n^b_j,x^b_j)_{j=1, \ldots, k^b}}
	{(n^c_l,x^c_l)_{l=1, \ldots, k^c}}
	\!\! c_{N, \{(n^a_i,x^a_i), (n^b_j,x^b_j), (n^c_l,x^c_l)\}}
	\, \bbE \bigg[
	\prod_{i,j,l} \xi_{n^a_i,x^a_i} \, \xi_{n^b_j,x^b_j} \, \xi_{n^c_l,x^c_l}
	\bigg] ,
\end{equation}
for suitable (explicit) coefficients $c_{N,\{\ldots\}}$.
The basic observation is that if a coordinate, say $(n^a_i, x^a_i)$,
is distinct from all other coordinates,
then it gives no contribution to \eqref{eq:30},
because the random variable $\xi_{n_i,x_i}$ is independent
of the other $\xi_{n,x}$'s and it has $\bbE[\xi_{n_i,x_i}]=0$.
This means that \emph{the coordinates in \eqref{eq:30}
have to match}, necessarily in pairs or in triples.\footnote{Note that
coordinates $(n^\alpha_i,x^\alpha_i)$ with the same label
$\alpha \in \{a,b,c\}$
are distinct, by $n^\alpha_i < n^\alpha_{i+1}$, see \eqref{eq:polydecom},
hence more than triple matchings cannot occur.}
We will show that triple matchings can be neglected,
so we restrict to pairwise matchings.

Let $\bs{D} \subseteq \{1,\ldots,N\}\times\Z^2$ be the subset of space-time
points given by the union of all coordinates
$(n^a_i,x^a_i)$, $(n^b_j,x^b_j)$,
$(n^c_l,x^c_l)$ in \eqref{eq:30}.
By the pairwise matching constraint, any index $(n,x) \in \bs{D}$
must appear exactly twice among the three sets of coordinates
with labels $a,b,c$. So we can label each index in $\bs{D}$
as either $ab$, $bc$ or $ac$, and we say
that consecutive indexes with the same label form a \emph{stretch}.
This decomposition into stretches will lead to the integral
representation \eqref{eq:ImQfree} for the third moment, as we now explain.

Let us write $\bs{D} = \{(n_i, x_i): \ i=1,\ldots, r\}$ and
consider the case when the first stretch has, say, label $ab$ and length $k \le r$
(this means that
$(n_i, x_i) = (n^a_i,x^a_i) = (n^b_i, x^b_i)$ for $i=1, \ldots, k$).
The key observation is that, if we fix the last index
$(n_k, x_k) = (M, y)$ and sum over the number $k$
and the locations $(n_i,x_i)$ of previous indexes inside the stretch,
then we obtain an expression similar to \eqref{eq:quasi-ren},
except that the last index is not summed
but rather fixed to $(n_k, x_k) = (M, y)$
(see Section~\ref{sec:witout3} for the details).
But this turns out to be precisely the second moment
\eqref{eq:quasi-ren3} of the point-to-point partition function
$Z_M^{\beta}(0,y)$.

In summary, when computing the third moment from \eqref{eq:30},
\emph{the contribution of each stretch of pairwise matchings
is given asymptotically by \eqref{eq:aszptp}}.
This is also the case
when we consider the partition function $Z_N^{\beta_N}(\phi)$ averaged over
the starting point.

We can finally explain
qualitatively the structure of the kernel \eqref{eq:kernel1}-\eqref{eq:ImQfree}
in Theorem~\ref{th:3rdmom}:
\begin{itemize}
\item the index $m$ of the sum in \eqref{eq:kernel1} corresponds
	to the number of stretches;
\item each stretch gives rise to a
kernel $G_{\theta}(b_i - a_i, y_i - x_i)$ in \eqref{eq:ImQfree},
by \eqref{eq:aszptp};
\item the switch from a stretch to the following consecutive stretch gives rise to
the remaining kernels $g_{\frac{a_i - b_{i-2}}{2}}(x_i - y_{i-2}) \,
g_{\frac{a_i - b_{i-1}}{2}}(x_i - y_{i-1})$ in \eqref{eq:ImQfree}.
\end{itemize}
We stress that the knowledge of precise asymptotic estimates
such as \eqref{eq:aszptp} is crucial to compute the limiting
expression \eqref{eq:kernel1}-\eqref{eq:ImQfree} for the third moment.

We refer to Section~\ref{sec:witout3} for a more detailed exposition of the combinatorial
structure in the third moment calculation, which
lies at the heart of the present paper.

\subsection{Discussion}
To put our results in perspective, we explain here some background. The key
background notion is
disorder relevance/irrelevance. The directed polymer is an example of a disordered system
that arises as a disorder perturbation of an underlying pure model, the random walk $S$ in
this case. A fundamental question is whether the disorder
perturbation, however small $\beta>0$ is, changes the qualitative behavior of the
pure model as
$N\to\infty$. If the answer is affirmative, then disorder is said to be {\em relevant};
otherwise disorder is said to be {\em irrelevant}. For further background,
see e.g. the monograph~\cite{G10}.

For the directed polymer on $\Z^{d+1}$, the underlying random walk $S$ is diffusive with
$|S_N|\approx N^{1/2}$, while under the polymer measure $P^\beta_{N}$, it has been
shown that for $d\geq 3$, there exists a critical value $\beta_c(d)>0$ such that
for $\beta<\beta_c(d)$, $|S_N|\approx N^{1/2}$ (see e.g.~\cite{CY06});
while for any $\beta>0$ in $d=1, 2$ and for $\beta>\beta_c(d)$ in $d\geq 3$,
it is believed that $|S_N|\gg N^{1/2}$. Thus the directed polymer model
should be disorder irrelevant in $d\geq 3$, disorder relevant in $d=1$, while $d=2$
turns out to be the critical dimension separating disorder relevance vs irrelevance,
and disorder should be {\em marginally relevant}.

In \cite{AKQ14}, Alberts, Khanin and Quastel showed that on the intermediate disorder scale
$\beta_N= \hbeta/N^{1/4}$, the rescaled partition functions of the
directed polymer on $\Z^{1+1}$ converges to the solution of the 1-dimensional
SHE \eqref{eq:SHE}.
We note that the idea of considering polymers with scaled temperature had already
appeared in the physics literature \cite{BD00, CDR10}.

Inspired in particular by \cite{AKQ14}, we developed in \cite{CSZ17a} a new perspective
on disorder relevance vs irrelevance
(see also \cite{CSZ16}). The heuristic is that, if a model is disorder
relevant, then under coarse graining and renormalization of space-time, the
effective disorder strength of the coarse-grained model diverges. Therefore to compensate,
it should be possible to choose the disorder strength $\beta_N\downarrow 0$ (known as
weak disorder limit) as the lattice spacing $\delta:=1/N\downarrow 0$ (known as
continuum limit) in such a way that we obtain a continuum disordered model. In
particular, the partition function $Z^\omega_{N, \beta_N}$ should admit a non-trivial
random limit for suitable choices of $\beta_N\downarrow 0$. In \cite{CSZ17a}, we
formulated general criteria for the partition functions of a disordered system to have
non-trivial continuum and weak disorder limits. These criteria were then verified for
the disordered pinning model, a family of
(possibly long-range) directed polymer on $\Z^{1+1}$, and
the random field perturbation of the critical Ising model on $\Z^2$. However, the
general framework developed in \cite{CSZ17a}
does not include models where disorder is only marginally relevant, such
as the directed polymer on $\Z^{2+1}$,
which led to our previous work \cite{CSZ17b} and to our current work.

Disorder relevance/irrlevance is also closely linked to the classification of
singular stochastic partial differential equations (SPDE), such as the SHE or the
KPZ equation, into sub-critical, critical, or super-critical ones, which
correspond respectively to disorder relevance, marginality and
disorder irrelevance. For sub-critical singular SPDEs, a general solution theory called
{\em regularity structures} has been developed in seminal work by Hairer in
\cite{H13, H14}, and alternative approaches have been developed by Gubinelli, Imkeller,
and Perkowski~\cite{GIP15}, and also by Kupiainen~\cite{K14}. However, for critical
singular SPDEs such as the SHE in $d=2$, the only known results so far are: our previous
work \cite{CSZ17a}, which established a phase transition in the intermediate disorder
scale $\beta_\eps = \hbeta (2\pi/\log \frac{1}{\eps})^{1/2}$ and identified the limit
in distribution of
the solution $u^\eps(t,x)$ in the subcritical regime $\hbeta<1$; the work of Bertini
and Cancrini~\cite{BC98}, which computed the limiting covariance of the random
field $u^\eps(t, \cdot)$ at the critical point $\hbeta=1$; and our current work,
which establishes the non-triviality of subsequential weak limits of the random field at
the critical point $\hbeta=1$.

Let us mention some related work on the directed polymer model on the hierarchical lattice.
In particular, for the marginally relevant case, Alberts, Clark and Koci\'c in \cite{ACK17}
established the existence of a phase transition, similar to \cite{CSZ17a}.
And more recently, Clark \cite{Cl17}
computed the moments of the partition function around a critical
 window for the case of bond disorder.
 The computations in the hierarchical lattice case employ the independence structure
inherent in hierarchical models,
which is not available on $\Z^d$.

\medskip
\noindent
\emph{Note added in publication.
More recently, Gu, Quastel and Tsai \cite{GQT19} proved the existence of all moments
for the 2-dimensional SHE in the critical window. They use different, functional analytic methods inspired by Dimock and Rajeev
\cite{DR04}.}

\subsection{Organization of the paper}
In Section \ref{sec:variance}, we recall the polynomial chaos expansion for the partition functions
and introduce the renewal framework, which are then used in Section \ref{sec:thmvar} to prove
Theorem \ref{th:variance} on the limiting second moment of the partition function. In
Section \ref{sec:thirdmom}, we derive a series expansion for the third moment of the averaged
point-to-point partition functions, whose terms are separated into two groups: ones with so-called
{\em triple intersections}, and ones with {\em no triple intersection}. Terms with no triple intersection
is shown in Section \ref{sec:witout3} to converge to the desired limit, while terms with triple intersections
are shown to be negligible in Section \ref{sec:triple}, using bounds developed in
Section \ref{sec:furtherbds}. Lastly, in Section \ref{sec:SHE}, we prove Theorems \ref{th:SHEvariance}
and \ref{th:SHE3rdmom} for the stochastic heat equation.

\section{Polynomial chaos and renewal framework}
\label{sec:variance}

In this section, we describe two key elements that form the basis of our analysis:
\begin{enumerate}
\item \emph{polynomial chaos expansions},
which represent the partition function
as a multilinear polynomial of modified
disorder random varibles, see Subsection~\ref{sec:poly}.

\item a \emph{renewal theory framework}, which allows to relate
the second moment of the partition function to suitable renewal functions,
see Subsection~\ref{sec:renewal}.
\end{enumerate}

We will use $\P_{a,x}$ and $\E_{a,x}$ to
denote probability and expectation for the random walk
$S$ starting at time $a$ from position $S_a=x\in\Z^2$, with the
subscript omitted when $(a,x)=(0,0)$.
Recalling
\eqref{eq:Zeven},
we define the family of \emph{point-to-point partition functions} by
\be\label{eq:Qab}
	Z^{\beta}_{a,b}(x, y) := \E_{a,x}\Big[e^{\sum_{n=a+1}^{b-1}
	(\beta\omega_{n, S_n} -\lambda(\beta))} \ind_{\{S_b=y\}}\Big],
	\qquad (a,x), (b, y)\in \Z^{3}_{\rm even}, \ a<b \,.
\ee
The original \emph{point-to-plane partition function}
$Z_N^\beta(x)$, see \eqref{eq:Zx}, can be recovered as follows:
\be\label{eq:Qabfree}
	Z_N^\beta(x) = \sum_{ y\in \Z^2}
	Z^{\beta}_{0,N}(x,y) \,.
\ee

We note that the point-to-plane partition function
has $\bbE[Z_{a,b}^\beta(x)] \equiv 1$, while
for the point-to-point partition function we have
\begin{equation} \label{eq:EZptp}
	\bbE\big[Z^{ \gb}_{a,b}(x,y) \big] =
	q_{a,b}(x,y) := q_{b-a}(y-x) \,,
\end{equation}
the transition probability kernel defined in \eqref{eq:qas}.
We will need to average the partition functions
$Z_{a,b}^\beta(x,y)$ over either $x$ or $y$, or both,
on the diffusive scale.
More precisely, we define for $N\in\N$
\begin{align}
	Z^{N,\gb}_{a,b}(x, \psi) &:=
	\sum_{y \in \Z^2}
	 Z^{\gb}_{a,b}(x,y)  \, \psi\big(\tfrac{y}{\sqrt{N}}\big) \,,
	 \qquad \psi\in C_b(\R^2) \,,
	 \label{eq:avgxpsi}\\
	Z^{N,\gb}_{a,b}(\phi, y) &:=
	\sum_{x \in \Z^2}
	\phi\big(\tfrac{x}{\sqrt{N}}\big)  \, Z^{\gb}_{a,b}(x,y)  \, ,
	\qquad \phi\in C_c(\R^2) \,,
	\label{eq:avgphiy}\\
	Z^{N,\gb}_{a,b}(\phi, \psi) &:=
	\frac{1}{N} \sum_{x, y \in \Z^2}
	\phi\big(\tfrac{x}{\sqrt{N}}\big)
	\, Z^{ \gb}_{a,b}(x,y)  \, \psi\big(\tfrac{y}{\sqrt{N}}\big) \,,
	\qquad \phi\in C_c(\R^2) \,, \ \psi\in C_b(\R^2)
	\, . \label{avgform22}
\end{align}
The reason that the terminal function $\psi$ is only required to be bounded and continuous,
while the initial function $\phi$ is compactly supported is that, we would like to include the case
$\psi\equiv 1$, which corresponds to the point-to-plane polymer partition function. On the other hand,
the initial function $\phi$ plays the role of a test function used to average the partition function.
(In general, the fact that at least one between $\phi$ and $\psi$ is compactly supported
ensures finiteness of the average \eqref{avgform22}.)
Note that $Z_{Nt}^\beta(\phi)$ in \eqref{eq:Qavgn} coincides with
$Z_{0,Nt}^{N,\beta}(\phi,\psi)$ with $\psi \equiv 1$.
From \eqref{eq:EZptp} we compute
\begin{align} \label{eq:qNavg0}
	\bbE\big[ Z^{N,\gb}_{a,b}(x, \psi) \big] =
	q_{a,b}^N(x, \psi) & :=	\sum_{y \in \Z^2} q_{b-a}(y-x) \,
	\psi\big(\tfrac{y}{\sqrt{N}}\big), \\
	\label{eq:qNavg}
	\bbE\big[ Z^{N,\gb}_{a,b}(\phi, y) \big] =
	q_{a,b}^N(\phi, y) & :=	\sum_{x \in \Z^2} \phi\big(\tfrac{x}{\sqrt{N}}\big)	
	q_{b-a}(y-x), \\
	\label{eq:qNavg1}
	\bbE\big[ Z^{N,\gb}_{a,b}(\phi, \psi) \big] =
	q_{a,b}^N(\phi, \psi) & := \frac{1}{N} \sum_{x,y \in \Z^2}
	\phi\big(\tfrac{x}{\sqrt{N}}\big)  \, q_{b-a}(y-x)
	\, \psi\big(\tfrac{y}{\sqrt{N}}\big) \,.
\end{align}
Note that these expectations are of order $1$
for $a = 0$ and $b = N$, because $q_N(y-x) \approx 1/N$ for $x,y = O(\sqrt{N})$,
see \eqref{eq:qas}-\eqref{eq:gu}.
This explains the normalizations in \eqref{eq:avgxpsi}-\eqref{avgform22}.

\smallskip

 \subsection{Polynomial chaos expansion}
\label{sec:poly}
Let us start by rewriting the point-to-point partition function from \eqref{eq:Qab} as
 \begin{align*}
 Z_{a,b}^{\gb}(x,y)=\E_{a,x}\Big[e^{\sum_{n=a+1}^{b-1} (\beta\omega_{n, S_n} -\lambda(\beta))} \ind_{\{S_b=y\}}\Big] =
 \E_{a,x}\Big[\prod_{a<n<b}\prod_{z\in\bbZ^2} e^{ (\beta\omega_{n, z} -\lambda(\beta)) \ind_{\{S_n=z\}} } \,\ind_{\{S_b=y\}}\Big].
 \end{align*}
 Using the fact that $e^{x \ind_{\{n\in\tau\}}} = 1 + (e^x-1) \ind_{\{n\in\tau\}}$ for $x\in\R$,
we can write
\begin{equation} \label{eq:xi}
\begin{split}
	Z_{a,b}^{ \gb}(x,y)
	= & \
	\E_{a,x} \Bigg[ \prod_{n=a+1}^{b-1} \prod_{z\in \bbZ^2}
	\big(1 +  \xi_{n,z} \, \ind_{\{S_n =z\}} \big)\ind_{S_b=y}
	 \Bigg] \,, \\
	 & \ \text{where} \quad
	\xi_{n,z} := e^{\gb\omega_{n,z} -\gl(\beta)} - 1\,.
\end{split}
\end{equation}
The random variables $\xi_{n,z}$ are i.i.d. with mean zero (thanks to the normalization by $\gl(\beta)$) and with variance
 $\bbvar[\xi_{n,z}] = e^{\gl(2\gb)-2\gl(\gb)}-1$.
 Recalling \eqref{eq:EZptp}
and expanding the product, we obtain the following polynomial chaos expansion:
\begin{equation} \label{eq:polypointQ}
\begin{split}
	Z_{a,b}^{\gb}(x,y)  & =  q_{a,b}(x,y)
	\,+\, \sum_{k\ge 1} \, \sumtwo{a < n_1 < \ldots < n_k < b}{x_1, \ldots, x_k \in \Z^2}
	q_{a,n_1}(x,x_1) \, \xi_{n_1,x_1} \,\cdot\\
	& \qquad \qquad \qquad
	\qquad \qquad  \cdot \bigg\{
	\prod_{j=2}^k q_{n_{j-1}, n_j}(x_{j-1}, x_j) \, \xi_{n_j,x_j} \bigg\}
	\, q_{n_k, b}(x_k,y) \,,
\end{split}
\end{equation}
with the convention that the product equals $1$ when $k=1$.
We have written $Z_{a,b}^{\gb}(x,y)$ as a multilinear polynomial
of the random variables $\xi_{n,x}$.

Analogous expansions hold for the averaged point-to-point partition
functions: by \eqref{avgform22}
\be\label{avgmulti}
\begin{split}
	Z^{N, \beta}_{a,b}(\phi, \psi) & =  q^N_{a,b}(\phi, \psi)
	+ \frac{1}{N} \, \sum_{k\ge 1} \, \sumtwo{a < n_1 < \ldots < n_k < b}{x_1, \ldots, x_k \in \Z^2}
	q^N_{a,n_1}(\phi,x_1) \, \xi_{n_1,x_1}  \cdot\\
	& \qquad \qquad \qquad
	\qquad \qquad \,\cdot\bigg\{
	\prod_{j=2}^k q_{n_{j-1}, n_j}(x_{j-1}, x_j) \, \xi_{n_j,x_j} \bigg\}
	\, q^N_{n_k, b}(x_k,\psi) \,.
\end{split}
\ee
Similar expansions hold for $Z^{N, \beta}_{a,b}(x, \psi)$
and $Z^{N, \beta}_{a,b}(\phi, y)$, without the factor $\frac{1}{N}$.

\subsection{Renewal theory framework}
\label{sec:renewal}

Given $N\in\N$, we define a sequence
of i.i.d.\ random variables $\big( (T^{(N)}_i, X^{(N)}_i) \big)_{i\in\N}$
taking values in $\N \times \Z^2$, with marginal law
\begin{equation} \label{eq:bsXN}
	\P\big((T^{(N)}_i, X^{(N)}_i) = (n,x)\big) := \frac{q_n(x)^2}{R_N}
	\, \ind_{\{1,\ldots, N\}}(n) \,,
\end{equation}
where we recall that $q_n(x)$ is defined in \eqref{eq:qas} and
$R_N=\sum_{n=1}^N\sum_{x\in \bbZ^2} q_n(x)^2$
is the replica overlap, see \eqref{eq:olap}.
We then define the corresponding random walk\footnote{$S^{(N)}$ should not be
confused with the random walk $S$ in the definition of the directed polymer model.}
on $\N \times \Z^2$
\begin{equation} \label{eq:tauS}
	\big( \tau^{(N)}_k \,, \ S^{(N)}_k \big) :=
	\Big( T^{(N)}_1 + \ldots + T^{(N)}_k \,, \
	X^{(N)}_1 + \ldots + X^{(N)}_k \Big) \,, \qquad k \in \N \,.
\end{equation}
Note that the first component $\tau^{(N)}_k$
is the renewal process that we introduced
in Subsection~\ref{sec:outline},
see \eqref{eq:XN}-\eqref{eq:tau}.

\smallskip

We now describe
the link with our model.
We note that $\sigma_N^2$, see \eqref{eq:sigmaN},
is the variance of the random variables
$\xi_{n,x} = e^{\beta \omega_{n,x} - \lambda(\beta)}-1$
which appear in \eqref{eq:polypointQ}.
Recalling \eqref{eq:Qab} and \eqref{eq:EZptp}, we introduce a crucial
quantity $U_N(n, x)$, that will appear repeatedly in our analysis,
which is a
suitably rescaled second moment of the point-to-point partition function:
\begin{equation}\label{eq:UNVar}
\begin{split}
	U_N(n, x) & := \sigma_N^2 \, \bbE\big[ Z^{\gb_N}_{0,n}(0,x)^2 \big]
	= \sigma_N^2 \, \big\{q_n(x)^2 + \bbvar\big[Z^{\gb_N}_{0,n}(0,x) \big]\big\}
	\,, \qquad n \ge 1 \,, \\
	\rule{0pt}{1.1em}U_N(0, x) & := \delta_{x,0} = \ind_{\{x=0\}} \,.
\end{split}
\end{equation}
By \eqref{eq:polypointQ}, we then have
\begin{align} \label{def:UN}
	U_N(n, x) & = \sigma_N^2 \, q_{0,n}(0,x)^2 \, +\, \sum_{k\ge 1} (\sigma_N^2)^{k+1}
	\sumtwo{0  < n_1 < \ldots < n_k < n}{x_1, \ldots, x_k \in \Z^2}
	q_{0,n_1}(0,x_1)^2  \,\,\cdot \\
	& \qquad \qquad \qquad \qquad \qquad
	\qquad \quad \cdot \bigg\{
	\prod_{j=2}^k q_{n_{j-1}, n_j}(x_{j-1}, x_j)^2 \bigg\}
	\, q_{n_k, n}(x_k,x)^2 \,. \notag
\end{align}
Looking at \eqref{eq:bsXN}-\eqref{eq:tauS}, we have the following key
probabilistic representation:
\begin{gather} \label{def:UNrenewal}
	U_N(n,x) = \sum_{r\ge 1}
	(\lambda_N)^r \,
	\P\big(\tau^{(N)}_r = n \,, \,
	S^{(N)}_r = x \big) \,, \qquad \text{where} \quad
	\lambda_N := \sigma_N^2 \, R_N \, .
\end{gather}
It is also convenient to define
\begin{gather}\label{eq:UNnrenewal}
	U_N(n) := \sum_{x\in\Z^2} U_N(n,x) = \sum_{r\ge 1} (\lambda_N)^r \,
	\P\big(\tau^{(N)}_r = n\big) \, .
\end{gather}
Thus $U_N(n,x)$
and $U_N(n)$ can be viewed as (exponentially weighted) \emph{local
renewal functions}.

\smallskip

We investigated the asymptotic properties of the random walk
$(\tau^{(N)}_k , S^{(N)}_k)$ in \cite{CSZ18}.
In particular, introducing the rescaled process
\begin{equation} \label{eq:bsYN}
	\bsY^{(N)}_s := (Y^{(N)}_s, V^{(N)}_s):=  \Bigg(
	\frac{\tau^{(N)}_{\lfloor s \, \log N \rfloor}}{N},
	\frac{S^{(N)}_{\lfloor s \, \log N \rfloor}}{\sqrt{N}} \Bigg) \,,
	\qquad  s\geq 0 \,,
\end{equation}
we proved in \cite{CSZ18} that $\bsY^{(N)}$ converges in distribution as $N\to\infty$ to
the L\'evy process $\bsY$ on $[0,\infty) \times \R^2$
with L\'evy measure
$$
\bsnu(\dd t, \dd x) = \frac{\ind_{(0,1)}(t)}{t} \, g_{t/4}(x) \, \dd t \, \dd x,
$$
where $g_u(x)$ is the standard Gaussian density on $\R^2$, see \eqref{eq:gu}.
Remarkably, the process $\bsY$ admits an explicit density:
\begin{equation*}
	f_s(t,x) := \frac{\P(\bsY_s \in (\dd t, \dd x))}{\dd t \, \dd x}
	= \frac{e^{-\gamma s} \, s \, t^{s-1}}{\Gamma(s+1)} \,
	g_{t/4}(x) \qquad
	\text{for } t \in (0,1) \,, \ x \in \R^2 \,,
\end{equation*}
which leads to a corresponding explicit expression for the
(weighted) local renewal function
\begin{equation*}
	G_\theta(t,x) \,:=\, \int_0^\infty e^{\theta s} \, f_s(t,x) \, \dd s
	\,=\, \bigg( \int_0^\infty \frac{e^{(\theta-\gamma) s} \, s \, t^{s-1}}{\Gamma(s+1)}
	\, \dd s \bigg) \, g_{t/4}(x) \,=\, G_\theta(t) \, g_{t/4}(x) \,,
\end{equation*}
where the functions $G_\theta(t)$ and $G_\theta(t,x)$ match with
\eqref{eq:Gtheta} and \eqref{eq:Gthetaux}.

We showed in \cite{CSZ18} that the sharp asymptotic behavior of $U_N(n,x)$ and
$U_N(n)$ is captured by the functions $G_{ \theta}(n,x)$ and $G_{ \theta}(x)$.
Note that for the weight $\lambda_N$ in \eqref{def:UNrenewal}-\eqref{eq:UNnrenewal}
we can write $\lambda_N = 1 + \frac{\theta}{\log N}(1+o(1))$
as $N\to\infty$, by our assumption \eqref{eq:sigmaN}.
Then we can rephrase \cite[Theorem~1.4 and Theorems~2.3-2.4]{CSZ18} as follows.

\begin{proposition}\label{UNtime}
Fix $\beta_N$ such that \eqref{eq:sigmaN} holds, for some $\theta \in \R$.
Let $U_N(n)$ be defined as in \eqref{eq:UNnrenewal}.
For any fixed $\delta > 0$, as $N\to\infty$ we have
\begin{equation}\label{eq:UnLLT}
	U_N(n) = \frac{\log N}{N} \,
	\big(G_{\theta} \big(\tfrac{n}{N}\big) + o(1)\big) \,, \qquad
	\text{uniformly for } \ \delta N \le n \le N \,,
\end{equation}
where $G_\theta$ is defined in \eqref{eq:Gtheta}.
Moreover, there exists $C \in (0,\infty)$ such that for all $N\in\N$
\begin{align}\label{est:UNunif}
	U_N(n) \leq C \, \frac{\log N}{N} \,
	G_{\theta} \big(\tfrac{n}{N}\big) \,, \qquad \forall 1 \le n \le N \,.
\end{align}
\end{proposition}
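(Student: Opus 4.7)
The plan is to exploit the weighted local renewal representation
\begin{equation*}
U_N(n)=\sum_{r\ge 1}\lambda_N^r\,\P\bigl(\tau^{(N)}_r=n\bigr),\qquad
\lambda_N=1+\tfrac{\theta}{\log N}(1+o(1)),
\end{equation*}
together with the functional convergence $\bsY^{(N)}\Rightarrow\bsY$ recalled just before the Proposition, to rewrite the sum as a Riemann sum for the integral defining $G_\theta$. Under the natural scaling $r=s\log N$, $n=tN$, we have $\lambda_N^r=e^{\theta s}(1+o(1))$ uniformly on $s\le K$, so the target identity is heuristically
$U_N(n)\approx\tfrac{\log N}{N}\int_0^\infty e^{\theta s}f_s(n/N)\,\dd s=\tfrac{\log N}{N}G_\theta(n/N)$.

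To make this rigorous I would first upgrade the functional convergence to a sharp \emph{local} limit theorem for $\tau^{(N)}_r$: uniformly for $s=r/\log N\in[K^{-1},K]$ and $t=n/N\in[\delta,1]$,
\begin{equation*}
\P\bigl(\tau^{(N)}_r=n\bigr)\,=\,\tfrac{1}{N}\bigl(f_s(t)+o(1)\bigr).
\end{equation*}
Aperiodicity is automatic since $q_{2n}(0)>0$ for every $n\ge 1$. With such an LLT at hand, the bulk contribution to $U_N(n)$ from $r\in[K^{-1}\log N,K\log N]$ is a Riemann sum with mesh $1/\log N$ that converges, upon letting $N\to\infty$ and then $K\to\infty$, to $\tfrac{\log N}{N}G_\theta(t)$, which proves \eqref{eq:UnLLT}.

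It remains to control two tails. For large $r$ (say $r\ge K\log N$), one uses that $f_s(t)$ is superexponentially small in $s$ for fixed $t\in(0,1)$ (owing to the factor $t^s/\Gamma(s+1)$), mirrored at the discrete level by an exponential-tilting large-deviation bound for $\tau^{(N)}_r$ that absorbs $\lambda_N^r$ once $K$ is large. For small $r$ (say $r\le K^{-1}\log N$), the one-big-jump estimate $\P(\tau^{(N)}_r=n)\le r\,u_n^2/R_N=O\bigl(r/(n\log N)\bigr)$ is more than enough. The uniform bound \eqref{est:UNunif} proceeds along the same lines but must reach $n=1$: a dyadic decomposition in $\log(N/n)$ works, and the asymptotic $G_\theta(t)\asymp t^{-1}(\log\tfrac1t)^{-2}$ as $t\to 0$ matches the one-jump bound once one notes that the effective range of $r$ shrinks to $r\lesssim\log N/\log(N/n)$ for small $n$. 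The principal obstacle is the \emph{uniform} sharp LLT in Step~1, because the density $f_s(t)$ is irregular at the boundaries $s=0$ and $t=1$; pushing Fourier inversion through the regularity and tail estimates of the step law $T_1^{(N)}$ is what occupies the bulk of the analysis in \cite{CSZ18}.
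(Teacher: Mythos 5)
The paper does not prove Proposition~\ref{UNtime}; it is introduced with the sentence ``Then we can rephrase \cite[Theorem~1.4 and Theorems~2.3--2.4]{CSZ18} as follows,'' and no proof appears in the present paper. So there is nothing here to compare your sketch against: the result is imported wholesale from the companion work on the Dickman subordinator, which you yourself flag at the end of your proposal.

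That said, your outline is an accurate account of the strategy that \cite{CSZ18} carries out: pass to the weighted local renewal representation $U_N(n)=\sum_{r\ge1}\lambda_N^r\,\P(\tau^{(N)}_r=n)$, prove a sharp local limit theorem for $\tau^{(N)}_r$ on the scale $r\asymp\log N$, $n\asymp N$, recognize the resulting sum as a Riemann sum with mesh $1/\log N$ for $\int_0^\infty e^{\theta s}f_s(t)\,\dd s=G_\theta(t)$, and control the two tails separately; you also correctly identify the uniform local LLT as the main obstacle, which cannot be extracted from the process-level convergence $\bsY^{(N)}\Rightarrow\bsY$ quoted in the paper. Two caveats worth flagging for the details. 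The one-big-jump bound you state, $\P(\tau^{(N)}_r=n)\le r\,u_n^2/R_N$, is of the right order but does not follow from a naive symmetrization (conditioning on the index of the largest increment and bounding $u_m^2$ by $u_{n/r}^2$ costs an extra factor of $r$); the sharp linear-in-$r$ bound requires exploiting that $u_{n-k}^2$ stays comparable to $u_n^2$ on most of the range of $k$, or an inductive scheme. And the uniform bound \eqref{est:UNunif} down to $n=1$, in the regime where $G_\theta(n/N)\sim \tfrac{N}{n(\log(N/n))^2}$ blows up, is genuinely harder than the bulk asymptotics \eqref{eq:UnLLT}; the dyadic decomposition you mention is the right picture, but propagating the renewal estimates uniformly into that regime is precisely the extra content of \cite[Theorem~1.4]{CSZ18}.
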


\begin{proposition}\label{prop:UNest}
Fix $\beta_N$ such that \eqref{eq:sigmaN} holds, for some $\theta \in \R$.
Let $U_N(n,x)$ be defined as in \eqref{def:UN}-\eqref{def:UNrenewal}.
For any fixed $\delta > 0$, as $N\to\infty$ we have
\begin{equation}\label{eq:Unas}
\begin{split}
	& U_N(n,x) = \frac{\log N}{N^2} \,
	\big(G_{\theta} \big(\tfrac{n}{N}, \tfrac{x}{\sqrt{N}}\big) + o(1)\big) \,
	2 \, \ind_{\{(n,x) \in \Z^3_{\mathrm{even}}\}} \,, \\
	& \quad \rule{0pt}{1.1em}\text{uniformly for } \
	\delta N \le n \le N \,, \ |x| \le \tfrac{1}{\delta} \sqrt{N} \,,
\end{split}
\end{equation}
where $G_\theta(t,x)$ is defined in \eqref{eq:Gthetaux}.
Moreover, there exists $C \in (0,\infty)$ such that for all $N\in\N$
\begin{equation}
	\label{eq:Udiffusive}
	\sum_{x \in \Z^2: \; |x| > M \sqrt{n}}
	\frac{U_{N}(n,x)}{U_{N}(n)}
	\le \frac{C}{M^2} \,, \qquad \forall 1 \le n \le N \,, \
	\forall M > 0  \,.
\end{equation}
\end{proposition}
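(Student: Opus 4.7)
My plan is to analyse $U_N(n,x)$ through its probabilistic representation \eqref{def:UNrenewal}, $U_N(n,x)=\sum_{r\ge 1}\lambda_N^r\,\P(\tau_r^{(N)}=n,\,S_r^{(N)}=x)$, where $\lambda_N=1+\theta/\log N+o(1/\log N)$ by \eqref{eq:sigmaN}-\eqref{eq:olap+}. The dominant contribution comes from $r$ of order $\log N$: writing $r=s\log N$ one has $\lambda_N^r=e^{\theta s}(1+o(1))$, and the sum $\sum_{r\ge 1}$ becomes a Riemann sum in $s$ with mesh $1/\log N$. If one can establish the joint local limit theorem
\begin{equation*}
 \P\big(\tau_r^{(N)}=n,\,S_r^{(N)}=x\big)\,=\,\tfrac{2}{N^2}\,f_{r/\log N}\big(\tfrac{n}{N},\tfrac{x}{\sqrt{N}}\big)\,(1+o(1))\cdot\ind_{\{(n,x)\in\Z^3_{\rm even}\}},
\end{equation*}
uniformly for $(n,x)$ in the range prescribed in \eqref{eq:Unas} and for $r$ in compact subsets of $(0,\infty)\cdot\log N$, then the Riemann sum argument yields \eqref{eq:Unas} with limit $\frac{2\log N}{N^2}\int_0^\infty e^{\theta s}f_s(n/N,x/\sqrt{N})\,\dd s=\frac{2\log N}{N^2}G_\theta(n/N,x/\sqrt{N})$. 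The factor $2$ encodes the density of $\Z^3_{\rm even}$ inside $\Z^3$, exactly as in \eqref{eq:qas}.

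The LCLT is the heart of the argument, and I would prove it by Fourier inversion. The characteristic function $\varphi_N(\tau,\xi):=\bbE[\exp(i\tau T_1^{(N)}+i\xi\cdot X_1^{(N)})]$ can be analysed by first performing the spatial sum using \eqref{eq:qas}, which gives $\sum_x q_n(x)^2 e^{ix\cdot\xi/\sqrt{N}}\approx u_n^2\,e^{-n|\xi|^2/(8N)}$, and then approximating the resulting sum over $n$ by an integral using $u_n^2=1/(\pi n)+O(1/n^2)$ together with the refined asymptotics $R_N=(\log N+\alpha)/\pi+o(1)$ from \eqref{eq:olap+}. A careful accounting of the corrections (in which the $\alpha$ terms from numerator and denominator cancel exactly) gives the clean expansion $\varphi_N(\tau/N,\xi/\sqrt{N})=1-\Psi(\tau,\xi)/\log N+o(1/\log N)$, where $\Psi(\tau,\xi)=\int_0^1(1-e^{tz})/t\,\dd t$ with $z=i\tau-|\xi|^2/8$ is the L\'evy exponent of $\bsY$. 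Raising to $r=\lfloor s\log N\rfloor$ gives $\varphi_N^r\to e^{-s\Psi}=\hat f_s$, and Fourier inversion on the sublattice $\Z^3_{\rm even}$ produces the LCLT with the correct density prefactor $2$. The main technical obstacle is ensuring uniformity in $(n,x)$ and $r$: this requires quantitative error bounds in the Fourier expansion, together with standard estimates on $|\varphi_N|$ away from the origin of the dual torus to control the Fourier inversion integral.

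The claim \eqref{eq:Unas} then follows from the LCLT by splitting $\sum_{r\ge 1}\lambda_N^r\,\P(\tau_r^{(N)}=n,S_r^{(N)}=x)$ at $r=\eta\log N$ and $r=\eta^{-1}\log N$: the middle range yields the Riemann-sum limit via dominated convergence, while the boundary contributions vanish as $\eta\downarrow 0$ using the prefactor $s$ in $f_s$ as $s\to 0^+$ and the super-exponential decay from $\Gamma(s+1)^{-1}$ as $s\to\infty$, combined with the uniform upper bound \eqref{est:UNunif} already established in Proposition \ref{UNtime}.

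For the tail estimate \eqref{eq:Udiffusive}, I would use a conditional second-moment argument. Conditionally on $T_i^{(N)}=m$, the increment $X_i^{(N)}$ has law $q_m(x)^2/u_m^2$, and a direct calculation from \eqref{eq:qas} shows $\sum_x|x|^2 q_m(x)^2\le Cm\,u_m^2$ for every $m\ge 1$ (the Gaussian approximation $q_m(x)^2\sim(2/(\pi m))g_{m/4}(x)\ind$ gives second moment exactly $m/2$, and the correction is negligible). Hence $\bbE[|X_i^{(N)}|^2\mid T_i^{(N)}=m]\le Cm$, and by independence and the symmetry $\bbE[X_i^{(N)}\mid T_i^{(N)}]=0$ we obtain $\bbE[|S_r^{(N)}|^2\mid\tau_r^{(N)}=n]\le Cn$. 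Chebyshev then gives $\P(|S_r^{(N)}|>M\sqrt{n}\mid\tau_r^{(N)}=n)\le C/M^2$, and multiplying by $\lambda_N^r\,\P(\tau_r^{(N)}=n)$ and summing over $r$ yields \eqref{eq:Udiffusive} via \eqref{eq:UNnrenewal}, with a constant uniform in $N$ and $n$.
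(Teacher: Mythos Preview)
The paper does not prove Proposition~\ref{prop:UNest} here; it is quoted from \cite{CSZ18} (see the sentence preceding Proposition~\ref{UNtime}: ``Then we can rephrase \cite[Theorem~1.4 and Theorems~2.3--2.4]{CSZ18} as follows''). Your outline is essentially the strategy carried out in \cite{CSZ18}: a joint local limit theorem for $(\tau_r^{(N)}, S_r^{(N)})$ obtained by characteristic-function analysis and Fourier inversion, combined with a Riemann-sum passage from $\sum_{r\ge 1}$ to $\log N\int_0^\infty \dd s$. Your argument for \eqref{eq:Udiffusive} via the conditional second moment $\bbE\big[|S_r^{(N)}|^2 \,\big|\, T_1^{(N)},\ldots,T_r^{(N)}\big] \le C\,\tau_r^{(N)}$ and Chebyshev is clean and correct.

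One small caveat on the truncation step for \eqref{eq:Unas}: you invoke \eqref{est:UNunif} from Proposition~\ref{UNtime} to control the boundary ranges $r<\eta\log N$ and $r>\eta^{-1}\log N$, but \eqref{est:UNunif} bounds the \emph{full} sum $\sum_{r\ge 1}\lambda_N^r\,\P(\tau_r^{(N)}=n)$, not partial sums over these ranges. What is actually needed for the large-$r$ tail is a large-deviation estimate of the type $\P(\tau_{\lfloor s\log N\rfloor}^{(N)}\le N)\le e^{s-cs\log s}$ (the discrete analogue of \eqref{eq:largedev}, also drawn from \cite{CSZ18}); for the small-$r$ tail a direct bound on $\P(\tau_r^{(N)}=n,\,S_r^{(N)}=x)$ for $n\ge\delta N$ suffices. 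These are routine once isolated, but \eqref{est:UNunif} by itself does not supply them.
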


We will also need the following asymptotic behavior on $G_\theta(t)$ from \cite{CSZ18}.
\begin{proposition}\label{lem:Gthetaas}
For every fixed $\theta \in \R$, we have that
\begin{equation}\label{eq:Gthetaas}
	G_\theta(t) =
	\frac{1}{t \, (\log \frac{1}{t})^2}
	+ \frac{2\theta + o(1)}{t \, (\log \frac{1}{t})^3}  \qquad \text{as}\quad t \to 0 \,.
\end{equation}
It follows that there exists $c_\theta \in (0,\infty)$ such that
\begin{equation}\label{eq:Gthetaunif}
	G_\theta(t) \le \hat G_\theta(t) :=
	\frac{c_\theta}{t \, (2 + \log \frac{1}{t})^2}
	= \frac{c_\theta}{t \, (\log \frac{e^2}{t})^2} \,, \qquad \forall t \in (0,1] \,.
\end{equation}
By direct computation $\frac{\dd}{\dd t} \hat G(t) < 0$ for all $t \in (0,1)$,
hence $\hat G_\theta(\cdot)$ is strictly decreasing.
\end{proposition}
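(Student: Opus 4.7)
The plan is to reduce the asymptotics of $G_\theta(t)$ to those of the Laplace-type integral
\begin{equation*}
	H(v) \,:=\, \int_0^\infty \frac{e^{-vs}}{\Gamma(s)} \, \dd s, \qquad v\to+\infty.
\end{equation*}
Using $s/\Gamma(s+1) = 1/\Gamma(s)$ together with $t^{s-1} = t^{-1}\, e^{-s\log(1/t)}$ in \eqref{eq:Gtheta}, one directly obtains
\begin{equation*}
	G_\theta(t) \,=\, \frac{1}{t}\, H(v), \qquad v \,:=\, \log\tfrac{1}{t} + (\gamma-\theta),
\end{equation*}
so $v\to+\infty$ as $t\to 0^+$, and it suffices to expand $H(v)$ for large $v$.

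For this, I would Taylor-expand $1/\Gamma(s)$ at $s=0$: from $\Gamma(s+1)=1-\gamma s+O(s^2)$ one has $1/\Gamma(s) = s + \gamma s^2 + r(s)$ with $r(s)=O(s^3)$ as $s\to 0^+$; moreover, by Stirling, $1/\Gamma(s)$ decays super-exponentially, so $|r(s)| \le C s^2$ for $s \ge 1$. Using $\int_0^\infty e^{-vs} s^k \, \dd s = k!/v^{k+1}$ and splitting the contribution of $r$ at $s=1$, the portion on $[0,1]$ is $O(1/v^4)$ by the cubic bound, while the portion on $[1,\infty)$ is exponentially small in $v$. This yields
\begin{equation*}
	H(v) \,=\, \frac{1}{v^2} + \frac{2\gamma}{v^3} + O\!\left(\frac{1}{v^4}\right), \qquad v\to\infty.
\end{equation*}
Substituting $v = L + (\gamma-\theta)$ with $L := \log(1/t)$ and expanding in $1/L$ gives $v^{-2} = L^{-2} + 2(\theta-\gamma)L^{-3} + O(L^{-4})$ and $2\gamma v^{-3} = 2\gamma L^{-3} + O(L^{-4})$; the $\gamma$-terms cancel in the $L^{-3}$ coefficient, so $H(v) = L^{-2} + (2\theta+o(1))L^{-3}$. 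Multiplication by $1/t$ produces exactly \eqref{eq:Gthetaas}.

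For the uniform bound \eqref{eq:Gthetaunif}, I would observe that the map $t\mapsto t\, G_\theta(t)\, (\log(e^2/t))^2$ is continuous on $(0,1]$ (by dominated convergence in \eqref{eq:Gtheta}) and tends to $1$ as $t\to 0^+$ by the asymptotic just proved, since $\log(e^2/t) = 2+L \sim L$. Hence it is bounded on $(0,1]$, giving \eqref{eq:Gthetaunif} for a suitable $c_\theta>0$. Finally, strict monotonicity of $\hat G_\theta$ follows by direct differentiation: setting $f(t) := t^{-1}(2-\log t)^{-2}$, one computes
\begin{equation*}
	f'(t) \,=\, \frac{\log t}{t^2\,(2-\log t)^3},
\end{equation*}
which is strictly negative on $(0,1)$ since $\log t < 0$ and $2-\log t > 0$. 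The only step that is not entirely routine is securing the $O(1/v^4)$ remainder in the expansion of $H(v)$: one must handle $r(s)$ separately near $s=0$ (via the Taylor bound) and for large $s$ (via Stirling), and verify that the two contributions combine to an $O(1/v^4)$ error so that no spurious $1/v^3$ correction spoils the clean cancellation that produces the coefficient $2\theta$.
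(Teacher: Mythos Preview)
Your argument is correct. The paper does not actually prove this proposition here; it is quoted from \cite{CSZ18} (see the sentence immediately preceding the statement). Your Laplace-integral approach via $H(v)=\int_0^\infty e^{-vs}/\Gamma(s)\,\dd s$, together with the Taylor expansion $1/\Gamma(s)=s+\gamma s^2+O(s^3)$ near $s=0$ and the super-exponential decay for large $s$, gives a clean self-contained derivation of \eqref{eq:Gthetaas}; the cancellation of the $\gamma$-contributions at order $L^{-3}$ is handled correctly. The arguments for the uniform bound \eqref{eq:Gthetaunif} (continuity on $(0,1]$ plus the limit $1$ at $t\to 0^+$) and for the strict monotonicity of $\hat G_\theta$ are also fine.
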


\smallskip
\section{Proof of Theorem \ref{th:variance}}\label{sec:thmvar}

Recall the definition \eqref{eq:gu} of $g_t(x)$.
Given a bounded function $\phi: \R^2 \to \R$, we define
\begin{equation}
	\label{eq:Phiax}
	\Phi_s(x) \,:=\, (\phi * g_{s/2})(x) = \int_{\R^2} \phi(x-y) \, g_{s/2}(y) \, \dd y \,,
	\qquad s > 0 \,, \ x \in \R^2 \,.
\end{equation}

The averaged partition function $Z_{Nt}^{\beta_N}(\phi)$ in Theorem~\ref{th:variance},
see \eqref{eq:Qavgn},
coincides with $Z_{0,Nt}^{N,\beta_N}(\phi,\psi)$ with $\psi \equiv 1$,
see \eqref{avgform22}. By the expansion \eqref{avgmulti} with
$\psi\equiv 1$, we obtain
\begin{equation} \label{eq:Q2avg}
	\bbvar[Z_{Nt}^{\beta_N}(\phi)] = \frac{1}{N^2} \, \sum_{k\ge 1} \,
	\,(\sigma_N^2)^k  \!\!\!\!\!\! \sumtwo{0 < n_1 < \ldots < n_k < Nt}
	{x_1, \ldots, x_k \in \Z^2} \!\!\!\!\!\!
	  q^{N}_{0,n_1}(\phi,x_1)^2
	\prod_{j=2}^k q_{n_{j-1}, n_j}(x_{j-1}, x_j)^2 .
\end{equation}
We isolate the term $k=1$,
because given $(n_1, x_1)=(m, x)$ and
$(n_k, x_k)=(n, y)$, the sum over $k\ge 2$
gives $\bbE[Z^{\gb_N}_{m,n}(x, y)^2]=U_N(n-m, y-x)/\sigma_N^2$,
by \eqref{eq:UNVar}-\eqref{def:UN}.
Therefore
\begin{align} \label{eq:Q2avgbis}
	\bbvar[Z_{Nt}^{\beta_N}(\phi)]
	 &= \frac{\sigma_N^2}{N^2} \sumtwo{0 < n < Nt}{x \in \Z^2}
	q^{N}_{0,n}(\phi,x)^2
    + \, \frac{\sigma_N^4}{N^2} \sumtwo{0 < m < n < Nt}{x,y\in\Z^2}
	q^{N}_{0,m}(\phi,x)^2 \, \bbE[Z_{m,n}^{\gb_N}(x,y)^2] \notag  \\
	&=\frac{\sigma_N^2}{N^2} \sumtwo{0 < n < Nt}{x \in \Z^2}
	q^{N}_{0,n}(\phi,x)^2
    + \, \frac{\sigma_N^2}{N^2} \sumtwo{0 < m < n < Nt}{x\in\Z^2}
	q^{N}_{0,m}(\phi,x)^2 \,U_{N}(n-m),
\end{align}
where in the second equality we summed over $y\in\Z^2$ -- this is the reason that only $U_N(n-m)$ appears instead of
$U_N(n-m,y-x)$; recall \eqref{def:UNrenewal} and \eqref{eq:UNnrenewal}.

We now let $N\to\infty$.
We first show that
the first term in the RHS of \eqref{eq:Q2avgbis}
vanishes as $O(\sigma_N^2) = O(\frac{1}{\log N})$,
see \eqref{eq:olap} and \eqref{eq:sigmaN}. Note that
for $v \in (0,1)$ and $x \in \R^2$ we have
\begin{equation}\label{eq:convPhi}
	\lim_{N\to\infty} q^{N}_{0,Nv}(\phi,\sqrt{N} x) = \Phi_v(x) \,,
	\qquad \sup_{m\in\N, \, z\in\Z^2} q^{N}_{0,m}(\phi, z) \le |\phi|_\infty < \infty \,,
\end{equation}
see \eqref{eq:qNavg}, \eqref{eq:qas} and \eqref{eq:Phiax}.
Then, by Riemann sum approximation, we have
\begin{equation*}
\begin{split}
	\frac{1}{N^2} \sumtwo{0 < n < Nt}{x \in \Z^2}
	q^{N}_{0,n}(\phi,x)^2 \,
	&
	\, \underset{N\to\infty}{\sim} \,
	\frac{1}{N^2} \sumtwo{0 < n < Nt}{x \in \Z^2}
	\Phi_{\frac{n}{N}} \big(\tfrac{x}{\sqrt{N}} \big)^2 \,
	 \\
	& \, \xrightarrow[N\to\infty]{} \,
	 \int_{(0,t) \times \R^2}
	\Phi_{v}(x)^2 \, \dd v\, \dd x \in (0,\infty) \,.
\end{split}
\end{equation*}
Indeed, the approximation is uniform for
$N\epsilon < n < Nt$, with fixed $\epsilon > 0$,
while the contribution of $n \le N \epsilon$ is small,
for $\epsilon > 0$ small, by the uniform bound in \eqref{eq:convPhi}.

It remains to focus on the second term in the RHS of \eqref{eq:Q2avgbis}.
By \eqref{eq:UnLLT}-\eqref{est:UNunif} and \eqref{eq:convPhi}, together
with $\sigma_N^2 \sim \frac{\pi}{\log N}$,
see \eqref{eq:sigmaN} and \eqref{eq:olap}, another Riemann sum approximation
gives
\begin{align}\label{limvar}
\bbvar[Z_{Nt}^{\beta_N}(\phi)] \xrightarrow[N\to\infty]{}
	&\,\, \pi \, \int\limits_{\substack{0 < u < v < t \\ x \in \R^2}}
	\Phi_{u}(x)^2 \, G_{\theta}(v-u)  \, \dd u \, \dd v
	\, \dd x \,,
\end{align}
Integrating out $x$, we obtain
\begin{equation*}
\begin{split}
	\int_{\R^2} 	\Phi_{u}(x)^2 \, \dd x & =
	\int_{\R^2} \bigg( \int_{\R^2 \times \R^2}
	\phi(z) \, \phi(z') \, g_{u/2}(x-z) \, g_{u/2}(x-z') \, \dd z \, \dd z'
	\bigg) \dd x  \\
	& = \int_{\R^2 \times \R^2}
	\phi(z) \, \phi(z') \, g_{u}(z-z') \, \dd z \, \dd z' \,,
\end{split}
\end{equation*}
which plugged into \eqref{limvar} proves \eqref{eq:varavgQfree}.\qed

\smallskip
\section{Expansion for the third moment}\label{sec:thirdmom}

In this section, we give an expansion for the third moment of the partition function,
which forms the basis of our proof of Theorem~\ref{th:3rdmom}.
We actually prove a more general version
for the averaged \emph{point-to-point}
partition functions,
which is of independent interest.

\begin{theorem}[Third moment, averaged point-to-point]\label{th:3rdmom2}
Let $t > 0$, $\theta\in\R$
and $\beta_N$ satisfy \eqref{eq:sigmaN}.
Fix a compactly supported $\phi\in C_c(\R^2)$ and a bounded
$\psi\in C_b(\R^2)$. Then
\begin{equation} \label{eq:3rdQfree2}
	\lim_{N\to\infty} \bbE\Big[
	\big( Z_{0,Nt}^{N,\beta_N}(\phi, \psi) -
	\bbE\big[ Z_{0,Nt}^{N,\beta_N}(\phi, \psi) \big] \big)^3 \Big]
	= M_{t}(\phi, \psi) :=  3 \,  \sum_{m=2}^\infty
	2^{m-1} \, \pi^{m} \,\,  \cI^{(m)}_{t}(\phi, \psi) <\infty,
\end{equation}
where we set $\Phi_s := \phi * g_{s/2}$ and $\Psi_s := \psi * g_{s/2}$,
see \eqref{eq:Phiax}, and define
\begin{equation} \label{eq:ImQfree2}
\begin{split}
	\cI^{(m)}_{t}(\phi, \psi)  & :=
	\idotsint\limits_{\substack{0 < a_1 < b_1 < a_2 < b_2 < \ldots < a_m < b_m < t \\
	x_1, y_1, x_2, y_2, \ldots, x_m, y_m \in \R^2}}
	\Phi_{a_1}^2(x_1) \, \Phi_{a_2}(x_2) \, \cdot \\
	& \qquad\ \cdot\
	G_{\theta}(b_1 - a_1, y_1 - x_1) \, g_{\frac{a_2 - b_1}{2}}(x_2 - y_1) \,
	G_{\theta}(b_2 - a_2, y_2 - x_2) \,\cdot  \\
	& \qquad\ \cdot\ \prod_{i=3}^{m}
	g_{\frac{a_i - b_{i-2}}{2}}(x_i - y_{i-2}) \, g_{\frac{a_i - b_{i-1}}{2}}(x_i - y_{i-1})
	\, G_{\theta}(b_i - a_i, y_i - x_i) \,\cdot \\
    & \qquad \ \cdot \Psi_{t-b_{m-1}}(y_{m-1}) \Psi^2_{t-b_m}(y_m) \ \dd \vec a\, \dd \vec b\,
    \dd \vec x\, \dd \vec y \,.
\end{split}	
\end{equation}
\end{theorem}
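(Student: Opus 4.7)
The plan is to expand the centered cube using the polynomial chaos representation \eqref{avgmulti}. Writing $Z_{0,Nt}^{N,\beta_N}(\phi,\psi) - \bbE[Z_{0,Nt}^{N,\beta_N}(\phi,\psi)]$ as a sum over $k\ge 1$ chaos coordinates, the cube becomes a triple sum over coordinates $(n_i^\alpha, x_i^\alpha)$ with $\alpha\in\{a,b,c\}$, weighted by products of transition kernels and by $\bbE\bigl[\prod \xi_{n,x}\bigr]$. Since the $\xi$'s are centered and independent, this expectation vanishes unless every spacetime coordinate in the union of the three copies appears in at least two of them. Because coordinates within a single copy are strictly time-ordered, each coordinate is either pairwise or triply matched. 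The first step, carried out in Sections~\ref{sec:triple} and \ref{sec:furtherbds}, is to bound away the contribution of configurations containing any triply-matched coordinate, using the uniform estimates on $U_N$ to show such contributions vanish as $N\to\infty$. The remainder of the argument then focuses on the purely pairwise matchings.

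For a purely pairwise configuration, order the matched coordinates $(n_1,x_1)<\ldots<(n_r,x_r)$ and label each by the pair of copies sharing it, producing a label in $\{ab,ac,bc\}$. Group them into maximal runs of identical labels, the \emph{stretches}, with endpoints $(a_i,x_i)$ and $(b_i,y_i)$ for $1\le i\le m$. Summing over internal coordinates and lengths within stretch $i$, together with the $\sigma_N^2$ factors from its matched $\xi$'s and the $q^2$ transitions shared by the two copies active in the stretch, the contribution collapses via \eqref{eq:UNVar}--\eqref{def:UN} to the local renewal factor $U_N(b_i-a_i,y_i-x_i)$. Consecutive stretches carry different labels (else they would merge), so they share exactly one ``continuing'' copy; a short combinatorial argument shows that for $i\ge 3$ the copy entering stretch $i$ afresh was last active in stretch $i-2$. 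This accounts for the transition kernels between stretches in \eqref{eq:ImQfree2}: a single $q$ from $(b_{i-1},y_{i-1})$ to $(a_i,x_i)$ for the continuing copy, plus, for $i\ge 3$, a second single $q$ from $(b_{i-2},y_{i-2})$ to $(a_i,x_i)$ for the returning copy, each asymptotic to $(2/N)\, g_{\cdot/2}$ by \eqref{eq:qas}. The two copies active in stretch $1$ both begin at the initial time, producing $\Phi_{a_1}(x_1)^2$ after convolving $\phi$ with the Gaussian heat kernel \eqref{eq:Phiax}; the fresh copy entering stretch $2$ contributes $\Phi_{a_2}(x_2)$; symmetric reasoning at the terminal time yields the trailing factors $\Psi_{t-b_{m-1}}(y_{m-1}) \, \Psi_{t-b_m}(y_m)^2$.

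After these reductions, and rescaling $U_N\sim 2(\log N)N^{-2} G_\theta$ via Proposition~\ref{prop:UNest} and $\sigma_N^2\sim \pi/\log N$ from \eqref{eq:sigmaN}-\eqref{eq:olap}, each admissible label sequence (there are $3\cdot 2^{m-1}$ of them, from the initial label and subsequent binary successor choices) produces, by Riemann-sum approximation on the simplex $0<a_1<b_1<\ldots<a_m<b_m<t$, the same integral $\pi^m\cI^{(m)}_t(\phi,\psi)$. Summing over all label sequences gives the claimed prefactor $3\cdot 2^{m-1}\pi^m$. The case $m=1$ does not arise because, in the centered cube, each copy must host at least one chaos coordinate, and a single stretch uses only two of the three copies.

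The main obstacle is to justify the interchange of the limit $N\to\infty$ with the sum over $m$ and the high-dimensional integrations. For fixed $m$, pointwise domination of each $U_N$ by the integrable kernel $\hat G_\theta(t)=c_\theta/(t\log^2(e^2/t))$ from \eqref{est:UNunif}-\eqref{eq:Gthetaunif}, together with the spatial-tail bound \eqref{eq:Udiffusive}, supplies the control needed for dominated convergence. The truly delicate point is absolute summability: since the label-counting prefactor $2^{m-1}\pi^m$ grows geometrically in $m$, one must extract a small multiplicative gain per stretch from the interplay between the $\log^{-2}$ decay of $\hat G_\theta$ at the origin and the Gaussian transition kernels between stretches. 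The natural route is to set up an operator-norm recursion on a transfer kernel mapping the output of one stretch into the input of the next, combining $g_{(a_i-b_{i-1})/2}\cdot g_{(a_i-b_{i-2})/2}\cdot \hat G_\theta(b_i-a_i,\cdot)$, and to show that its norm on a suitably weighted $L^p$ space is dominated by a quantity which, combined with the $2\pi$ prefactor per stretch, yields a convergent geometric series. Once such an $m$-uniform bound is in place, dominated convergence delivers both the finite limit and the absolute convergence of the series in \eqref{eq:3rdQfree2}-\eqref{eq:ImQfree2}.
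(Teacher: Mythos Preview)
Your overall architecture matches the paper's proof exactly: the split into triple versus no-triple intersections (Propositions~\ref{prop:3rdmomconv} and~\ref{prop:notripleQ}), the stretch decomposition with label combinatorics giving the factor $3\cdot 2^{m-1}$, the collapse of each stretch to $U_N$, the identification of the inter-stretch transition kernels, and the Riemann-sum passage to the limit for fixed $m$ are all as in Sections~\ref{sec:thirdmom}--\ref{sec:triple}.

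The one substantive gap is the summability in $m$, which you correctly flag as the delicate point but do not carry out. Your proposed ``operator-norm recursion on a weighted $L^p$ space'' yielding a ``convergent geometric series'' is not what the paper does, and it is not clear it would succeed: after integrating out the spatial variables (as the paper does in \eqref{eq:Imphipsibd2}), each inter-stretch kernel contributes a factor $1/\sqrt{u_i(u_i+u_{i-1})}$ whose $L^1$-norm in $u_i$ diverges logarithmically near $0$, so a step-by-step operator bound does not straightforwardly produce a ratio below $(2\pi)^{-1}$. The paper's device is different and specific: insert an exponential weight $e^{\lambda}\prod_i e^{-\lambda v_i}$ with a \emph{free} parameter $\lambda$, so that each $G_\theta$-integral is bounded by $C_\lambda = \sfc_\theta/(2+\log\lambda)$ (Lemma~\ref{lem:GthIntegral}), while the coupled $u$-integrals are controlled by an explicit recursion $\phi^{(k)}$ with a combinatorial bound $\phi^{(k)}\le 32^k\cdot(\text{polylog})$ (Lemmas~\ref{lem:phikbd}, \ref{th:intlog}, \ref{lem:Sk}). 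Choosing $\lambda=m$ then makes $C_\lambda^m\sim (\log m)^{-m}$, which is \emph{super}-exponential and beats any geometric prefactor (see \eqref{eq:Jmexpbd}--\eqref{eq:Jmex2}). This $\lambda$-dependent smallness extracted from the $\log^{-2}$ singularity of $\hat G_\theta$ is the missing idea in your sketch; without it or an equivalent mechanism, the series bound does not close.
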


We observe that Theorem~\ref{th:3rdmom} is a special case of Theorem~\ref{th:3rdmom2}:
it suffices to take $\psi \equiv 1$ so that
$Z_{0,Nt}^{N,\beta_N}(\phi, \psi) = Z_{Nt}^{\beta_N}(\phi)$,
see \eqref{avgform22} and \eqref{eq:Qavgn},
and it is easy to check that
\eqref{eq:3rdQfree2}-\eqref{eq:ImQfree2} match with \eqref{eq:3rdQfree}-\eqref{eq:ImQfree},
since $\Psi_s \equiv 1$.

\smallskip

It remains to prove Theorem~\ref{th:3rdmom2}.
This will be reduced to Propositions~\ref{prop:3rdmomconv} and~\ref{prop:notripleQ}
below.
We exploit the multilinear expansion in \eqref{avgmulti} for the partition function,
which leads to the following representation for the centered third moment
(recall \eqref{eq:qNavg0}-\eqref{eq:qNavg1}):
\begin{equation}
	\label{eq:MNQ}
\begin{split}
	& \E\Big[\big( Z^{N, \beta_N}_{s,t}(\phi, \psi)
	- \bbE[Z^{N, \beta_N}_{s,t}(\phi, \psi) ] \big)^3\Big] \\
    & \quad =  \sumtwo{\bsA, \, \bsB, \, \bsC \subseteq \{s+1, \ldots, t-1\} \times \Z^2}
	{|\bsA \ge 1, \, |\bsB| \ge 1, \, |\bsC| \ge 1}
	\frac{1}{N^{3}} \,
	q^N_{s, a_1}(\phi, x_1) \, q^N_{s, b_1}(\phi,y_1) \,\cdot
	\, q^N_{s, c_1}(\phi, z_1) \,\cdot
	\\
	& \qquad \ \cdot\ \bbE  \Bigg[
	\xi_{A_1}  \prod_{i=2}^{|\bsA|}
	\xi_{A_i} \, q(A_{i-1}, A_i)  \cdot \xi_{B_1} \,
	\prod_{j=2}^{|\bsB|}  \xi_{B_j} \, q(B_{j-1}, B_j)
	\cdot  \xi_{C_1}
	\prod_{k=2}^{|\bsC|} \xi_{C_k} \, q(C_{k-1}, C_k)
	\Bigg]\,\cdot   \\
	& \qquad \ \cdot \, q^N_{a_{|\bsA|}, t}(x_{|\bsA|}, \psi)\,
	q^N_{b_{|\bsB|}, t}(y_{|\bsB|}, \psi) \,
	q^N_{c_{|\bsC|}, t}(z_{|\bsC|}, \psi)  \,,
\end{split}
\end{equation}
where we agree that
$\bsA=(A_1, \ldots, A_{|\bsA|})$ with $A_i=(a_i, x_i)\in \Z^3_{\rm even}$,
and $\bsB$, $\bsC$ are defined similarly, with $B_j=(b_j, y_j)$, $C_k=(c_k, z_k)$, and
we set for short
\begin{equation*}
	q(A_{i-1}, A_i):=q_{a_i-a_{i-1}}(x_i-x_{i-1}) \,.
\end{equation*}
(When $|\bsA| = 1$, the product $\prod_{i=2}^{|\bsA|} \ldots$
equals $1$, by definition, and similarly for $\bsB$ and $\bsC$.)

\smallskip

We now split the sum in \eqref{eq:MNQ} into two parts:
\begin{equation}\label{eq:trinotri}
	\E\Big[\big( Z^{N, \beta_N}_{s,t}(\phi, \psi)
	- \bbE[Z^{N, \beta_N}_{s,t}(\phi, \psi) ] \big)^3\Big]
	= M_{s, t}^{N, \rm NT}(\phi, \psi) + M_{s, t}^{N, \rm T}(\phi, \psi) \,,
\end{equation}
defined as follows:
\begin{itemize}
\item $M_{s, t}^{N, \rm NT}(\phi, \psi)$ is the sum in \eqref{eq:MNQ}
restricted to $\bsA, \bsB, \bsC$
such that $\bsA \cap \bsB\cap \bsC=\emptyset$, which we call
the case with {\em no triple intersections};

\item $M_{s, t}^{N, \rm T}(\phi, \psi)$ is the sum in \eqref{eq:MNQ}
restricted to $\bsA, \bsB, \bsC$
such that $\bsA \cap \bsB\cap \bsC \ne \emptyset$, which we call
the case with {\em triple intersections}.
\end{itemize}
These parts are analyzed in the following propositions,
which together imply Theorem~\ref{th:3rdmom2}.

\begin{proposition}[Convergence with no triple intersections]\label{prop:3rdmomconv}
Let the assumptions of Theorem~\ref{th:3rdmom2} hold.
Then
\be \label{eq:3rdm}
\lim_{N\to\infty} M^{N, \rm NT}_{0, Nt}(\phi, \psi) = M_t(\phi, \psi) =
3\,  \sum_{m=2}^\infty 2^{m-1} \, \pi^m \,  \cI^{(m)}_{t}(\phi, \psi) <\infty.
\ee
\end{proposition}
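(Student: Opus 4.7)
My plan is to unfold the sum $M_{0,Nt}^{N,\mathrm{NT}}(\phi,\psi)$ according to the stretch decomposition sketched in Subsection~\ref{sec:outline}, and then to pass to the continuum limit using the sharp asymptotics of $U_N$ and $q$. In the NT regime every point of $\bsA\cup\bsB\cup\bsC$ belongs to exactly two chains and so carries a label in $\{ab,ac,bc\}$; ordering the points chronologically and grouping consecutive same-labelled points into maximal stretches produces some $m\ge 2$ stretches with labels $(L_1,\dots,L_m)$, $L_{i+1}\ne L_i$ (the case $m=1$ is excluded since it would force a triple intersection). I will therefore rewrite $M_{0,Nt}^{N,\mathrm{NT}}(\phi,\psi)$ as $\sum_{m\ge 2}\sum_{(L_1,\dots,L_m)}\sum_{(a_i,x_i),(b_i,y_i)}(\cdots)$, where for each label sequence the inner sum over the interior indices of the $i$-th stretch collapses, via the polynomial chaos expansion \eqref{eq:polypointQ} and the identity \eqref{eq:UNVar}--\eqref{def:UN}, to the factor $\sigma_N^2\,U_N(b_i-a_i,y_i-x_i)$. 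The remaining factors are inter-stretch single kernels $q_{a_{i+1}-b_i}(x_{i+1}-y_i)$ carried by the chain whose letter is common to $L_i$ and $L_{i+1}$, together with boundary terms that converge via \eqref{eq:qNavg0}--\eqref{eq:qNavg1} and \eqref{eq:convPhi} to $\Phi_{a/N}(x/\sqrt N)$ and $\Psi_{(t-b/N)}(y/\sqrt N)$ from \eqref{eq:Phiax}.

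I then pass to the limit by Riemann-sum approximation. Rescaling $a_i=N\alpha_i$, $b_i=N\beta_i$, $x_i=\sqrt N\xi_i$, $y_i=\sqrt N\eta_i$, I substitute the precise asymptotic of Proposition~\ref{prop:UNest} for each $\sigma_N^2 U_N(b_i-a_i,y_i-x_i)$ and the local CLT \eqref{eq:qas} for each inter-stretch $q$, being careful to track the joint parity constraint imposed by the $\ind_{\{(n,x)\in\Z^3_{\mathrm{even}}\}}$ factors present in both $U_N$ and $q$: since consecutive stretches and inter-stretch transitions share space-time points, these constraints couple all $2m$ endpoint variables into a single common parity class. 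Combining this parity bookkeeping with $\sigma_N^2\sim\pi/\log N$ from \eqref{eq:sigmaN} and \eqref{eq:olap+}, each label sequence converges to $\pi^m\cI^{(m)}_t(\phi,\psi)$ where $\cI^{(m)}_t$ is the integral \eqref{eq:ImQfree2}. Summing over the $3\cdot 2^{m-1}$ admissible label sequences---all producing the same integral by the symmetry of $\phi$ and $\psi$ under the relabelling $a\leftrightarrow b\leftrightarrow c$---yields the coefficient $3\cdot 2^{m-1}\pi^m$ in \eqref{eq:3rdm}. Degenerate single-point stretches ($a_i=b_i$) carry the vanishing factor $\sigma_N^2=O(1/\log N)$ and are easily shown to give a negligible contribution.

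The principal difficulty lies in justifying the interchange of $\lim_{N\to\infty}$ and $\sum_{m\ge 2}$, which simultaneously delivers the finiteness of the limiting series. I would use the uniform bound \eqref{est:UNunif} together with $G_\theta(t)\le\hat G_\theta(t)=c_\theta/(t(\log(e^2/t))^2)$ from Proposition~\ref{lem:Gthetaas}, and the diffusive localization \eqref{eq:Udiffusive}, to dominate each $\sigma_N^2\,U_N(b_i-a_i,y_i-x_i)$ and each transition kernel by an explicit $N$-independent, integrable majorant. The resulting iterated convolutions of $\hat G_\theta$ against the heat kernel $g$ contract because of the logarithmic decay of $\hat G_\theta$, providing the margin that beats the combinatorial growth $2^{m-1}\pi^m$ and produces both the $m$-uniform majorant required for dominated convergence and the summability of $\sum_{m\ge 2}2^{m-1}\pi^m\cI^{(m)}_t(\phi,\psi)$. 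This convolution-norm estimate is the heart of the argument and the step I expect to be the main obstacle; everything upstream is essentially bookkeeping plus the asymptotics already proved in \cite{CSZ18}.
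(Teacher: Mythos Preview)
Your proposal is correct and follows essentially the same route as the paper: the stretch decomposition yielding $M^{N,\rm NT}_{0,Nt}(\phi,\psi)=\sum_{m\ge 2}3\cdot 2^{m-1}I^{(N,m)}_{0,Nt}(\phi,\psi)$ with each $I^{(N,m)}$ built from $\sigma_N^2\,U_N$ factors and inter-stretch kernels $q$, then termwise Riemann-sum convergence to $\pi^m\cI^{(m)}_t(\phi,\psi)$, and finally a uniform-in-$N$ super-exponential bound in $m$ to justify dominated convergence. The paper carries out the key super-exponential estimate by first integrating out all spatial variables (reducing to a one-dimensional temporal integral $J^{(m)}$ with factors $1/\sqrt{(a_i-b_{i-1})(a_i-b_{i-2})}$), then inserting an exponential tilt $e^\lambda\prod_i e^{-\lambda v_i}$ with $\lambda=m$ to make $\int_0^1 e^{-\lambda v}\hat G_\theta(v)\,\dd v\le \sfc_\theta/(2+\log\lambda)$ arbitrarily small, and controlling the remaining iterated integrals via a recursion $\phi^{(k)}(u)=\int_0^1\frac{\phi^{(k-1)}(s)}{\sqrt{s(s+u)}}\,\dd s$ with combinatorially bounded coefficients---this is precisely the ``contraction'' you anticipate as the main obstacle.
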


\begin{proposition}[Triple intersections are negligible]\label{prop:notripleQ}
Let the assumptions of Theorem~\ref{th:3rdmom2} hold.
Then
\be
	\lim_{N\to\infty} M^{N, \rm T}_{0, Nt}(\phi, \psi) = 0.
\ee
\end{proposition}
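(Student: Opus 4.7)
The plan is to exploit that each triple intersection point $(n,x)$ contributes the small factor
\[
    \bbE[\xi_{n,x}^3] \,=\, \kappa_3 \, \beta_N^3 + O(\beta_N^4) \,=\, O\big((\log N)^{-3/2}\big),
\]
which follows from the cumulant expansion $\lambda(t) = \tfrac{t^2}{2} + \tfrac{\kappa_3}{6} t^3 + O(t^4)$ of the log-moment generating function; more generally, the exponential moment assumption \eqref{eq:genomega} yields $|\bbE[\xi_{n,x}^k]| \le C_k \beta_N^k$ for every fixed $k \ge 2$. Since a pairwise intersection contributes only $\sigma_N^2 = O((\log N)^{-1})$, replacing a pairwise intersection by a triple one costs an extra factor $\beta_N = O((\log N)^{-1/2})$, which is what will ultimately drive $M^{N, \rm T}$ to zero.

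First, I would order the triple intersections of $(\bsA, \bsB, \bsC)$ chronologically and decompose each chain around the \emph{first} such point $(n^*, x^*)$, so that the three ``before'' subchains $\bsA^{<}, \bsB^{<}, \bsC^{<}$ carry only pairwise matchings by minimality of $n^*$. Substituting into \eqref{eq:MNQ} and factoring out $\bbE[\xi_{n^*, x^*}^3]$ yields an estimate of the form
\[
    |M^{N, \rm T}_{0, Nt}(\phi, \psi)| \,\le\, \sumtwo{0 \le n^* < Nt}{x^* \in \Z^2}
    \Theta^{<}_N(n^*, x^*) \, \cdot \, |\bbE[\xi_{n^*, x^*}^3]| \, \cdot \,
    \Theta^{>}_N(n^*, x^*) \,,
\]
where $\Theta^{<}_N(n^*, x^*)$ encodes three polymer copies, averaged at time $0$ against $\phi$, meeting at $(n^*, x^*)$ with only pairwise matchings, and $\Theta^{>}_N(n^*, x^*)$ is its forward analogue (which may still contain further triple intersections, by the minimality convention). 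An inductive peeling argument extracting one triple intersection at a time from $\Theta^{>}_N$ then reduces matters to bounding the purely pairwise analogues on both sides, since each additional peeled triple point brings in another $O(\beta_N^3)$ factor that overwhelms the extra space-time summation.

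It remains to bound these three-chain-to-a-point quantities. They can be expanded combinatorially into ``stretches'' of pairwise matchings of types $ab$, $bc$, $ac$ in the same spirit as the proof of Proposition~\ref{prop:3rdmomconv}, with the added constraint of a common endpoint $(n^*, x^*)$. Applying the uniform local renewal bound \eqref{est:UNunif} stretch by stretch, controlling the spatial structure via the diffusive tail \eqref{eq:Udiffusive}, and summing over $(n^*, x^*)$ by Riemann sum approximation produces an expression bounded by iterated integrals of $\hat G_{\theta}$ from Proposition~\ref{lem:Gthetaas}. Combined with the prefactor $|\bbE[\xi^3]| = O(\beta_N^3)$, this yields $|M^{N, \rm T}_{0, Nt}(\phi, \psi)| = O(\beta_N)$, which vanishes as $N \to \infty$. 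The main obstacle I expect is the near-endpoint behavior: a na\"ive bound replacing $\hat G_{\theta}(t)$ by $1/t$ would produce a divergent logarithmic integral when the triple point approaches either boundary of $[0, Nt]$, so the sharp $(\log(e^2/t))^{-2}$ denominator in \eqref{eq:Gthetaunif} is essential, and checking integrability uniformly in $N$ at all such stretch concatenations is the delicate technical point.
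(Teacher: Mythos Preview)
Your approach is correct and is essentially the one the paper follows in Section~\ref{sec:triple}: the ``inductive peeling'' you describe is realized there as a sum over the full set $\bsD = \bsA \cap \bsB \cap \bsC$ of triple intersection points, see \eqref{eq:notriple}, which produces a geometric series with ratio $\rho_N = O((\log N)^{-1/2})$ together with boundary factors $A_N, B_N$ controlled via the no-triple bounds of Lemma~\ref{lem:notriplebds}. The only organizational refinement the paper adds is to split each no-triple block as $q^3 + 3q\,\bbvar + M^{N,\rm NT}$ (see \eqref{eq:baseq}) and bound these three pieces separately, with the middle term $q\,\bbvar$, not the stretch expansion, dominating $\rho_N$; the near-endpoint integrability issue you flag is exactly the content of Lemmas~\ref{lem:phikbd} and~\ref{lem:phikbdhat}, and is handled via the $(\log(e^2/t))^{-2}$ decay of $\hat G_\theta$ just as you anticipate.
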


Proposition~\ref{prop:3rdmomconv}
is proved in the next section. The proof of Proposition~\ref{prop:notripleQ}
will be given later, see Section~\ref{sec:triple}.

\smallskip
\section{Convergence without triple intersections}
\label{sec:witout3}

In this section, we prove Proposition~\ref{prop:3rdmomconv}
and several related results.

\subsection{Proof of Proposition~\ref{prop:3rdmomconv}}

We first derive a
representation for $M^{N, \rm NT}_{s, t}(\phi, \psi)$,
which collects the terms in the expansion \eqref{eq:MNQ} with
$\bsA\cap \bsB\cap \bsC=\emptyset$.

\smallskip

Denote $\bsD :=\bsA\cup \bsB\cup\bsC\subset \{s+1, \ldots, t-1\}\times \Z^2$,
with $\bsD=(D_1, \ldots, D_{|\bsD|})$ and $D_i=(d_i, w_i)$. Since $\bbE[\xi_z]=0$,
the contributions to
$M^{N, \rm NT}_{s, t}(\phi, \psi)$ come only from $\bsA, \bsB, \bsC$ where the points in $\bsA\cup\bsB\cup\bsC$ pair up. In particular,
$$
k:= |\bsD| = \frac{1}{2} (|\bsA| + |\bsB| + |\bsC|) \geq 2,
$$
and each point $D_j$ belongs to exactly two of the three sets $\bsA, \bsB, \bsC$, and
hence we can associate a vector $\bsell = (\ell_1, \ldots, \ell_k)$ of
labels $\ell_j \in \{AB,\, BC,\, AC\}$. Note that there is a one to
one correspondence between $(\bsA, \bsB, \bsC)$ and $(\bsD, \bsell)$.
We also recall that $\xi_{n,z} = e^{\beta_N \omega(n,z) - \lambda(\beta_N)}-1$,
hence $\sigma_N^2=\bbE[\xi_z^2]$, see \eqref{eq:sigmaN}.
From \eqref{eq:MNQ} we can then write
\begin{equation} \label{eq:prelQ}
\begin{split}
	M^{N, \rm NT}_{s,t}(\phi,\psi) & =
	\frac{1}{N^{3}} \sum_{k=2}^{\infty} \, \sigma_N^{2k} \,
    \sumtwo{\bsD \subseteq \{s+1, \ldots, t-1\} \times \Z^2}{|\bsD|=k\geq 2}
    \sum_{\bsell\in \{AB,\, BC,\, AC\}^k} \\
	& \qquad \ q^N_{s, a_1}(\phi, x_1) \, q^N_{s, b_1}(\phi,y_1) \,
	q^N_{s, c_1}(\phi, z_1) \,\cdot\\
	& \qquad \cdot \prod_{i=2}^{|\bsA|}
	q(A_{i-1}, A_i) \, \prod_{j=2}^{|\bsB|}
	q(B_{j-1}, B_j) \, \prod_{m=2}^{|\bsC|} q(C_{m-1}, C_m) \,\cdot\\
	& \qquad \cdot q^N_{a_{|\bsA|}, t}(x_{|\bsA|}, \psi)\,
	q^N_{b_{|\bsB|}, t}(y_{|\bsB|}, \psi) \, q^N_{c_{|\bsC|}, t}(z_{|\bsC|}, \psi)  \,,
\end{split}
\end{equation}
where we agree that $\bsA, \bsB, \bsC$ are implicitly determined by $(\bsD, \bsell)$.
\smallskip

We now make a combinatorial observation. The
sequence $\bsell=(\ell_1, \ldots, \ell_{k})$ consists of consecutive \emph{stretches}
$(\ell_1, \ldots, \ell_i)$, $(\ell_{i+1}, \ldots, \ell_j)$, etc., such that the
labels are constant in each stretch and
change from one stretch to the next.
Any stretch, say $(\ell_p, \ldots, \ell_q)$,
has a first point $D_p = (a,x)$ and a last point $D_q = (b,y)$.
Let $m$ denote the number of stretches and
let $(a_i, x_i)$ and $(b_i, y_i)$,
with $a_i \le b_i$, be the first and last points of the $i$-th stretch.

We now rewrite \eqref{eq:prelQ} by summing over $m\in\N$, $(a_1, b_1, \ldots, a_m, b_m)$, and $(x_1, y_1, \ldots, x_m, y_m)$.
The sum over the labels of $\bsell$ leads to a combinatorial factor $3 \cdot 2^{m-1}$, because there are $3$ choices for the label of the first stretch and two choices for the label of the following stretches. Once we fix $(a_1, x_1)$ and $(b_1, y_1)$, summing over all possible configurations inside the first stretch then gives the factor
\begin{equation*}
	\sum_{r=1}^\infty \sigma_N^{2(r+1)} \!\!\!\!\!\!\!\!
	\sumtwo{a_1 = t_0 < t_1 < \ldots < t_r = b_1}{z_0 = x_1, \, z_1, z_2, \ldots, z_{r-1} \in \Z^2,
	\, z_r = y_1}
	\prod_{i=1}^r q_{t_{i-1}, t_i}(z_{i-1},z_i)^2 = \sigma_N^2 \,
	U_N(b_1 - a_1, \, y_1 - x_1) \,,
\end{equation*}
where we recall that $U_N$ is defined in
\eqref{eq:UNVar}-\eqref{def:UN}.
A similar factor arises from each stretch, which leads to the
following crucial identity (see Figure~\ref{figure}):
\begin{equation} \label{eq:prel2Q}
\begin{split}
	M^{N, \rm NT}_{s, t}(\phi,\psi) & = \sum_{m=2}^{\infty} 3 \cdot 2^{m-1}
	I^{(N, m)}_{s, t}(\phi, \psi), \qquad \text{where} \\
    I^{(N, m)}_{s, t}(\phi, \psi) & :=	\frac{\sigma_N^{2m}}{N^{3}}
	\sumtwo{s < a_1 \le b_1 < a_2 \le b_2 < \ldots < a_m \le b_m < t}
	{x_1, y_1, x_2, y_2, \ldots, x_m, y_m \in \Z^2} \!\!\!\!\!\!\!\!\!\!
	q^N_{s,a_1}(\phi,x_1)^2 \, q^N_{s,a_2}(\phi,x_2) \, \cdot
	\\
	& \qquad \cdot U_N(b_1-a_1, y_1 - x_1) \, q_{b_1, a_2}(y_1,x_2) \,
	U_N(b_2-a_2, y_2 - x_2)\,\cdot
	\\
	& \qquad \cdot \prod_{i=3}^{m}
	\Big\{ q_{b_{i-2}, a_i}(y_{i-2},x_i) \, q_{b_{i-1}, a_i}(y_{i-1},x_i) \,
	U_N(b_i-a_i, y_i - x_i) \Big\} \,\cdot\\
	& \qquad
	\cdot q^N_{b_{m-1}, t}(y_{m-1}, \psi) \,
	q^N_{b_{m}, t}(y_{m}, \psi)^2 \,,
\end{split}
\end{equation}
with the convention that $\prod_{i=3}^m \{\ldots\} = 1$ for $m=2$. Note that the sum starts with $m=2$ because in \eqref{eq:prelQ}, we have $|\bsA|, |\bsB|, |\bsC|\geq 1$.

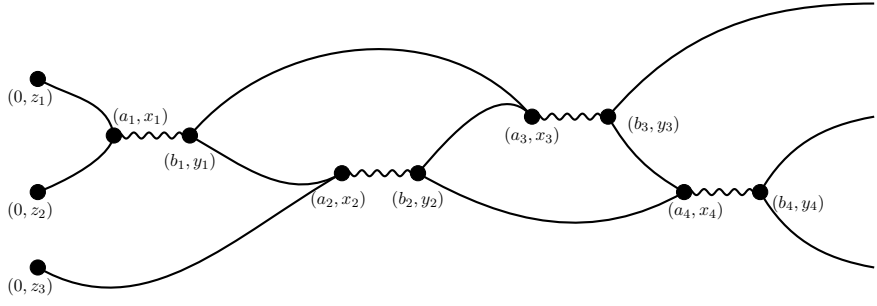
\begin{figure}
\begin{tikzpicture}[scale=0.5]
\draw  [fill] (0, -0.5)  circle [radius=0.2]; \node at (-0.1,-1.2) {\scalebox{0.6}{$(a_2,x_2)$}};
\draw [-,thick, decorate, decoration={snake,amplitude=.4mm,segment length=2mm}] (0,-0.5) -- (2,-0.5);
\draw  [fill] (2, -0.5)  circle [radius=0.2]; \node at (2,-1.2) {\scalebox{0.6}{$(b_2,y_2)$}};
\draw[thick] (2,-0.5) to [out=50,in=130] (5,1);
\draw  [fill] (5, 1)  circle [radius=0.2]; \node at (5,0.5) {\scalebox{0.6}{$(a_3,x_3)$}};
 \draw [-,thick, decorate, decoration={snake,amplitude=.4mm,segment length=2mm}] (5,1) -- (7,1);
\draw  [fill] (7, 1)  circle [radius=0.2]; \node at (8.2,0.8) {\scalebox{0.6}{$(b_3,y_3)$}};
\draw[thick] (-4,0.5) to [out=50,in=130] (5,1);
\draw  [fill] (-4,0.5)  circle [radius=0.2]; \node at (-4,-0.2) {\scalebox{0.6}{$(b_1,y_1)$}};
 \draw [-,thick, decorate, decoration={snake,amplitude=.4mm,segment length=2mm}] (-6,0.5) -- (-4,0.5);
 \draw[thick] (-8,-1) to [out=200,in=260] (-6,0.5);
  \draw  [fill] (-8,-1)  circle [radius=0.2];  \node at (-8.2,1.5) {\scalebox{0.6}{$(0,z_1)$}};
   \draw[thick] (-8,2) to [out=-30,in=100] (-6,0.5);
   \draw  [fill] (-8,2)  circle [radius=0.2];  \node at (-8.2,-1.5) {\scalebox{0.6}{$(0,z_2)$}};
 \draw  [fill] (-6,0.5)  circle [radius=0.2]; \node at (-5.3,1) {\scalebox{0.6}{$(a_1,x_1)$}};
 \draw[thick] (-4,0.5) to [out=-30,in=210] (0,-0.5);
  \draw[thick] (-8,-3) to [out=-30,in=210] (0,-0.5); \node at (-8.2,-3.5) {\scalebox{0.6}{$(0,z_3)$}};
   \draw  [fill] (-8,-3)  circle [radius=0.2];
  \draw[thick] (2,-0.5) to [out=-30,in=210] (9,-1);
   \draw  [fill] (9,-1)  circle [radius=0.2];  \node at (9.3,-1.5) {\scalebox{0.6}{$(a_4,x_4)$}};
    \draw[thick] (7,1) to [out=-60,in=150] (9,-1);
    \draw [-,thick, decorate, decoration={snake,amplitude=.4mm,segment length=2mm}] (9,-1) -- (11,-1);
     \draw  [fill] (11,-1)  circle [radius=0.2];  \node at (12,-1.3) {\scalebox{0.6}{$(b_4,y_4)$}};
      \draw[thick] (11,-1) to [out=60,in=190] (14,1);
       \draw[thick] (11,-1) to [out=-60,in=-190] (14,-3);
       \draw[thick] (7,1) to [out=50,in=180] (14,4);
\end{tikzpicture}
\caption{Diagramatic representation of the expansion \eqref{eq:prel2Q} of the third moment. Curly lines
between nodes $(a_i,x_i)$ and $(b_i,y_i)$ have weight $U_N(b_i-x_i,y_i-x_i)$, coming for pairwise matchings between a single pair of copies $AB, BC$ or $CA$, while
solid, curved lines between nodes $(a_i,x_i)$ and $(b_{i-1},y_{i-1})$ or between $(a_i,x_i)$ and $(b_{i-2},y_{i-2})$
indicate a weight $q_{b_{i-1},a_i}(y_{i-1},x_i)$ and $q_{b_{i-2},a_i}(y_{i-2},x_i)$, respectively.
\label{figure}}
\end{figure}

\smallskip

If we compare \eqref{eq:prel2Q} with \eqref{eq:3rdm} and \eqref{eq:ImQfree2},
we see that Proposition~\ref{prop:3rdmomconv} follows from the following result
and dominated convergence.\qed

\begin{lemma}\label{lem:INmbd}
For $m\geq 2$, let
$I^{(N, m)}_{Nt}(\phi, \psi):=I^{(N, m)}_{0, Nt}(\phi, \psi)$ be defined
as in \eqref{eq:prel2Q}, and let
$\cI^{(m)}_t(\phi, \psi)$ be defined as in \eqref{eq:ImQfree2}. Then
\be\label{eq:INmconv}
	\lim_{N\to\infty} I^{(N, m)}_{Nt}(\phi, \psi) = \pi^m\,  \cI^{(m)}_t(\phi, \psi)
	\qquad \forall\, m\geq 2 \,.
\ee
Furthermore, for any $C>0$ we have
\be\label{eq:INmbound}
	|I^{(N, m)}_{Nt}(\phi, \psi)| \leq e^{-Cm} \qquad
	\text{for all $m, N$ sufficiently large} \,.
\ee
\end{lemma}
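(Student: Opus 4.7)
The plan is to establish~\eqref{eq:INmconv} as a Riemann-sum approximation and~\eqref{eq:INmbound} as a uniform integrability bound, both built on the sharp asymptotics and upper bounds in Propositions~\ref{UNtime}--\ref{prop:UNest}, the local central limit theorem~\eqref{eq:qas}, and the explicit decay~\eqref{eq:Gthetaunif} of $G_\theta$.

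For the pointwise convergence~\eqref{eq:INmconv}, I would perform the rescaling $a_i=N\alpha_i$, $b_i=N\beta_i$, $x_i=\sqrt N\,\xi_i$, $y_i=\sqrt N\,\eta_i$ in the sum defining $I^{(N,m)}_{Nt}(\phi,\psi)$, and insert the asymptotics $\sigma_N^{2m}\sim(\pi/\log N)^m$ (from~\eqref{eq:sigmaN}--\eqref{eq:olap+}), $U_N(n,x)\sim 2(\log N/N^2)\,\ind_{(n,x)\in\Z^3_{\mathrm{even}}}\,G_\theta(n/N,x/\sqrt N)$ (Proposition~\ref{prop:UNest}), and $q_n(x)\sim 2\,\ind_{\mathrm{even}}\,g_{n/2}(x)$ (local CLT), together with the boundary convergences $q^N_{0,a_1}(\phi,x_1)\to\Phi_{\alpha_1}(\xi_1)$ and $q^N_{b_m,Nt}(y_m,\psi)\to\Psi_{t-\beta_m}(\eta_m)$ already used in~\eqref{eq:convPhi}. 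Careful bookkeeping---in which the $m$ factors of $\log N$ from the $U_N$'s cancel $\sigma_N^{2m}$, and the factors of $2$ from the CLT asymptotics are absorbed against the parity constraints in the lattice sums---identifies the integrand as a Riemann approximation of $\pi^m$ times the integrand in~\eqref{eq:ImQfree2}. To promote pointwise convergence of the integrand to convergence of the full sum, I would split the sum at a small scale $\delta>0$: on the bulk region where $b_i-a_i,a_{i+1}-b_i\ge\delta N$ and $|x_i|,|y_i|\le\delta^{-1}\sqrt N$, the uniform asymptotics of Propositions~\ref{UNtime}--\ref{prop:UNest} yield Riemann-sum convergence to the continuum integral over the corresponding region, while the complementary region is controlled by the uniform majorants~\eqref{est:UNunif}, the Gaussian tails of $q_n$, and the diffusive concentration~\eqref{eq:Udiffusive}, and its contribution vanishes as $\delta\to 0$ uniformly in~$N$.

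The hard part, and the main obstacle, is the super-exponential bound~\eqref{eq:INmbound}. Using Proposition~\ref{UNtime} together with~\eqref{eq:Udiffusive} one obtains a uniform pointwise majorant for $U_N(n,x)$, and the local CLT gives $q_n(x)\le C\,g_{n/2}(x)\,\ind_{\mathrm{even}}$; substituting these into $I^{(N,m)}_{Nt}(\phi,\psi)$ and carrying out the same Riemann approximation yields $|I^{(N,m)}_{Nt}(\phi,\psi)|\le C^m J^{(m)}$, where $J^{(m)}$ is a continuum integral of the same shape as $\cI^{(m)}_t$ but with $G_\theta$ replaced by the explicit majorant $\hat G_\theta$ of~\eqref{eq:Gthetaunif}. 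I would then integrate out the spatial variables $\xi_i,\eta_i$ by iterated use of the Gaussian semigroup identities $\int g_u(x-y)\,g_v(y-z)\,\dd y=g_{u+v}(x-z)$ and $\int G_\theta(w,x)\,\dd x=G_\theta(w)$, treating the branching at each interior vertex $x_i$---where two heat kernels $g_{(a_i-b_{i-2})/2}$ and $g_{(a_i-b_{i-1})/2}$ meet---by the elementary identity $\int g_u(x-a)\,g_v(x-b)\,\dd x=g_{u+v}(a-b)$. After this reduction the integrand is dominated, up to bounded factors depending on $\phi$ and $\psi$, by $\prod_{i=1}^m\hat G_\theta(b_i-a_i)$ on the simplex $\{0<a_1<b_1<\cdots<a_m<b_m<t\}$; since $\hat G_\theta$ is integrable on $(0,t)$ by~\eqref{eq:Gthetaunif}, a simplex-volume bound gives $J^{(m)}\le K^m/m!$ for some $K=K(\theta,t)<\infty$, leading to $|I^{(N,m)}_{Nt}(\phi,\psi)|\le(CK)^m/m!$, much stronger than the required~\eqref{eq:INmbound}. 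The delicacy lies in executing the spatial integration so as to preserve the $1/m!$ simplex factor while correctly controlling the mild singularity of $\hat G_\theta$ at zero together with any polynomial-in-$m$ losses coming from the branching structure.
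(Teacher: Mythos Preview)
Your plan for the convergence \eqref{eq:INmconv} is essentially the paper's: truncate at scale $\delta$ in time and at scale $M$ in space, use the uniform asymptotics \eqref{eq:Unas}, \eqref{eq:qas}, \eqref{eq:convPhi} on the bulk, and control the complement via \eqref{est:UNunif} and \eqref{eq:Udiffusive}. That part is fine.

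The gap is in your argument for \eqref{eq:INmbound}. The claim that after integrating out the spatial variables the integrand is dominated by $\prod_{i=1}^m\hat G_\theta(b_i-a_i)$ is false. At each branching vertex $x_i$ for $i\ge 3$ you have, as you note, $\int g_{(a_i-b_{i-1})/2}(x_i-y_{i-1})\,g_{(a_i-b_{i-2})/2}(x_i-y_{i-2})\,\dd x_i = g_{(2a_i-b_{i-1}-b_{i-2})/2}(y_{i-1}-y_{i-2})$, but this still couples $y_{i-1}$ and $y_{i-2}$. To decouple and continue the iteration you are forced to bound this heat kernel by its supremum, which produces the singular factor
\[
\frac{1}{\pi(2a_i-b_{i-1}-b_{i-2})}\ \le\ \frac{1}{2\pi\sqrt{(a_i-b_{i-1})(a_i-b_{i-2})}}\,.
\]
These factors are not ``bounded factors depending on $\phi$ and $\psi$'': they are singular in the time variables and, crucially, they destroy the $1/m!$ you hoped to extract from the simplex. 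Indeed, setting $u_i:=a_i-b_{i-1}$, $v_i:=b_i-a_i$, the $u$-integral decouples from the $v$-integral and the recursion $\phi^{(k)}(u)=\int_0^1 \frac{\phi^{(k-1)}(s)}{\sqrt{s(s+u)}}\,\dd s$ grows like $32^{k}$ (cf.\ Lemma~\ref{lem:phikbd}), not like $1/k!$; the losses from the branching are exponential in $m$, not polynomial.

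The decay in \eqref{eq:INmbound} therefore cannot come from a simplex volume. The mechanism the paper uses is different: insert $1\le e^{\lambda}\prod_i e^{-\lambda v_i}$, separate the $v$-integrals, and use that $\int_0^1 e^{-\lambda v}\hat G_\theta(v)\,\dd v\le C_\lambda=\sfc_\theta/(2+\log\lambda)\to 0$ as $\lambda\to\infty$ (Lemma~\ref{lem:GthIntegral}). This gives a bound of the form $e^{\lambda}\cdot(c\,C_\lambda)^m\cdot 32^m$, and choosing $\lambda$ large (e.g.\ $\lambda=m$) makes the base $c\,C_\lambda\cdot 32$ arbitrarily small, yielding the super-exponential decay. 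Your proposal is missing both ingredients: the singular $u$-kernels and the $e^{-\lambda v}$ Laplace trick that actually drives the decay.
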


\smallskip
The proof of Lemma~\ref{lem:INmbd} is given later,
see Subsection~\ref{sec:INmbd}.
We first prove the next result on $\cI^{(m)}_{t}(\phi, \psi)$,
which will reveal a structure that
will be used in the proof of Lemma~\ref{lem:INmbd}.

\begin{lemma}\label{lem:Imconv}
For $\phi\in C_c(\R^2)$, $\psi \in C_b(\R^2)$, and $\cI^{(m)}_t(\phi, \psi)$
defined as in \eqref{eq:ImQfree2}, we have:
\be\label{eq:Imconv}
\forall\, a \in (0,\infty) \,,
 \qquad \sum_{m=2}^\infty a^{m} \,  |\cI^{(m)}_{t}(\phi, \psi)| <\infty.
\ee
\end{lemma}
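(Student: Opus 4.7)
Since the integrand in \eqref{eq:ImQfree2} is pointwise nonnegative once we replace $\phi,\psi$ by $|\phi|,|\psi|$, we may assume $\phi,\psi\ge 0$. The proof proceeds in two stages: first I reduce the spatial part of $\cI^{(m)}_t(\phi,\psi)$ to a purely temporal integral, then I estimate that integral using integrability properties of $G_\theta$ together with the ordered-simplex structure of the time variables.

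\emph{Stage~I (spatial reduction).} I bound $\Phi_{a_2}(x_2)\le|\phi|_\infty$, $\Psi_{t-b_{m-1}}(y_{m-1})\le|\psi|_\infty$ and $\Psi_{t-b_m}^2(y_m)\le|\psi|_\infty^2$, retaining the factor $\Phi_{a_1}^2(x_1)$ to keep the $x_1$-integral finite. Viewed as a graph with vertex set $\{x_i,y_i\}$ and Gaussian kernels as edges, the spatial integrand has cycles produced by the ``long-range'' factors $g_{(a_i-b_{i-2})/2}(x_i-y_{i-2})$ for $i\ge 3$. I break each cycle using the pointwise bound $g_u(z)\le g_u(0)=(2\pi u)^{-1}$, which gives $g_{(a_i-b_{i-2})/2}(x_i-y_{i-2})\le 1/\bigl(\pi(a_i-b_{i-2})\bigr)$, and pull these purely temporal factors out of the spatial integral. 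What remains is a Gaussian chain $x_1\!-\!y_1\!-\!x_2\!-\!y_2\!-\!\cdots\!-\!x_m\!-\!y_m$; integrating sequentially from $y_m$ backwards, each Gaussian integrates to $1$ (being a probability density), and the last step is $\int\Phi_{a_1}^2(x_1)\,dx_1\le\|\phi\|_2^2$ by Young's inequality (since $\phi\in C_c(\R^2)\subset L^2$). Consequently
\[|\cI^{(m)}_t(\phi,\psi)|\le\|\phi\|_2^2\,|\phi|_\infty\,|\psi|_\infty^3\,T_m,\qquad T_m:=\int\limits_{0<a_1<b_1<\cdots<a_m<b_m<t}\prod_{i=1}^m G_\theta(b_i-a_i)\prod_{i=3}^m\frac{d\vec a\,d\vec b}{\pi(a_i-b_{i-2})}.\]

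\emph{Stage~II (time integration).} Changing to gap variables $u_i=b_i-a_i$ and $v_j=a_{j+1}-b_j$ (with $v_0=a_1$ and $v_m=t-b_m$), one has $a_i-b_{i-2}=v_{i-2}+u_{i-1}+v_{i-1}$, and $(u,v)$ ranges over a $2m$-simplex of volume $\sim t^{2m}/(2m)!$. A first, \emph{preliminary} estimate uses the crude decoupling $\frac{1}{v+u+v}\le\frac{1}{v+v}$: the $u$-integrations each give $\int_0^t G_\theta\le C_t<\infty$ (Proposition~\ref{UNtime}/\ref{lem:Gthetaas}), while the $v$-integrations reduce to iterated application of the Hilbert-like operator $(Tf)(v)=\int_0^t f(u)/(u+v)\,du$ on $L^2(0,t)$, whose norm is bounded via Schur's test with weight $v^{-1/2}$ by $\pi$. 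Together these give $T_m\le C\,t^2\,C_t^m$.

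\emph{Main obstacle.} The exponential bound $C_t^m$ alone yields convergence of $\sum_m a^m|\cI^{(m)}_t|$ only for $a<1/C_t$, whereas Lemma~\ref{lem:Imconv} asks for convergence for \emph{all} $a>0$, i.e., for a super-exponential decay of $|\cI^{(m)}_t|$. The missing factorial-type suppression must come from combining the ordered-simplex volume $t^{2m}/(2m)!$ with the \emph{logarithmic} tail from Proposition~\ref{lem:Gthetaas}, $\int_0^s G_\theta\le c_\theta/\log(e^2/s)$: on the typical slice of the simplex, $u_i,v_j\sim t/m$, so that $\int_0^{t/m}G_\theta\le c/\log m$ and $\prod G_\theta(u_i)$ scales like $(\log m)^{-m}$ on that slice; balanced against $(a_i-b_{i-2})^{-1}\sim m/t$ and the simplex volume, Stirling gives decay beyond any geometric rate. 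Making this rigorous is the heart of the proof and requires either a careful stratification of the simplex by the order statistics of $(u_i,v_j)$ (with Proposition~\ref{lem:Gthetaas} applied in each stratum) or, preferably, an iterated Schur-type inequality for the coupled operator with kernel $G_\theta(u)/(v+u+v')$, which simultaneously captures the integrability of $G_\theta$ and the coupling structure of the $(v_{i-2}+u_{i-1}+v_{i-1})^{-1}$ factors.
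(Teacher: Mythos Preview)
Your Stage~I is essentially correct, though the paper's spatial reduction differs slightly: rather than bounding $g_{(a_i-b_{i-2})/2}$ by its sup and keeping the Gaussian chain, the paper integrates the product $g_{(a_i-b_{i-2})/2}(x_i-y_{i-2})\,g_{(a_i-b_{i-1})/2}(x_i-y_{i-1})$ over $x_i$ and bounds the resulting single Gaussian by $\bigl(2\pi\sqrt{(a_i-b_{i-1})(a_i-b_{i-2})}\bigr)^{-1}$. Either route leads to a purely temporal integral with coupled denominators.

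The genuine gap is Stage~II, and you flag it yourself: your ``typical-slice'' heuristic is not a proof, and the Schur-test bound yields only geometric decay $\pi^{m}$. The mechanism you are missing is an \emph{exponential tilting}. After normalizing $t=1$, one inserts the trivial factor $e^{\lambda}\prod_{i=1}^{m} e^{-\lambda(b_i-a_i)}\ge 1$ into the temporal integrand, for a free parameter $\lambda>0$. Enlarging the simplex to a product domain then decouples the $G_\theta$-factors from the gap variables: the bound becomes $e^{\lambda}\bigl(\int_0^1 e^{-\lambda v}G_\theta(v)\,dv\bigr)^{m}$ times a remaining integral over the gaps $u_i=a_i-b_{i-1}$. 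The first factor is controlled by the elementary estimate
\[
\int_0^1 e^{-\lambda v}\,\hat G_\theta(v)\,dv \ \le\ \frac{\sfc_\theta}{2+\log\lambda}\,,
\]
which is a direct consequence of $G_\theta(v)\asymp v^{-1}(\log\tfrac{1}{v})^{-2}$ from Proposition~\ref{lem:Gthetaas}. The remaining gap-integral $\int_{(0,1)^{m-1}}\prod_{i\ge 3}\frac{1}{\sqrt{u_i(u_i+u_{i-1})}}\,d\vec u$ is handled by an inductive logarithmic estimate (the paper's recursion $\phi^{(k)}(u)=\int_0^1\frac{\phi^{(k-1)}(s)}{\sqrt{s(s+u)}}\,ds$, Lemma~\ref{lem:phikbd}), giving a uniform bound $C\cdot 32^{m}$. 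Choosing $\lambda=m$ produces
\[
J^{(m)}\ \le\ e^{m}\Big(\frac{32\,\sfc_\theta}{2+\log m}\Big)^{m},
\]
which decays faster than any geometric sequence. The tilting step is precisely what converts the logarithmic integrability of $G_\theta$ into the factor $(\log m)^{-m}$; neither a simplex-volume Stirling count nor an $L^2$ operator norm by itself will generate it, because without the weight $e^{-\lambda v}$ the full integral $\int_0^1 G_\theta(v)\,dv$ is just a fixed constant with no $m$-dependence.
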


\subsection{Proof of Lemma  \ref{lem:Imconv}}

In light of Remark \ref{rm:ZNscaling}, we may
assume $t=1$. Recall that
\begin{equation} \label{eq:Imphipsi}
\begin{split}
	\cI^{(m)}(\phi, \psi) & := \cI^{(m)}_1(\phi, \psi)  :=
	\idotsint\limits_{\substack{0 < a_1 < b_1 < a_2 < b_2 < \ldots < a_m < b_m < 1 \\
	x_1, y_1, x_2, y_2, \ldots, x_m, y_m \in \R^2}}
	\Phi_{a_1}^2(x_1) \, \Phi_{a_2}(x_2) \, \cdot\\
	& \qquad\ \cdot\
	G_{\theta}(b_1 - a_1, y_1 - x_1) \, g_{\frac{a_2 - b_1}{2}}(x_2 - y_1) \,
	G_{\theta}(b_2 - a_2, y_2 - x_2)  \,\cdot \\
	& \qquad\ \cdot \ \prod_{i=3}^{m}
	g_{\frac{a_i - b_{i-2}}{2}}(x_i - y_{i-2}) \, g_{\frac{a_i - b_{i-1}}{2}}(x_i - y_{i-1})
	\, G_{\theta}(b_i - a_i, y_i - x_i)  \,\cdot \\
    & \qquad \ \cdot\ \Psi_{1-b_{m-1}}(y_{m-1}) \Psi^2_{1-b_m}(y_m) \ \dd \vec a\,
    \dd \vec b\, \dd \vec x\, \dd \vec y,
\end{split}	
\end{equation}
where $G_\theta(t, x):=G_\theta(t) g_{t/4}(x)$, with $g_{t/4}(x)$ being the heat
kernel, see \eqref{eq:gu},
and $G_\theta$ defined in \eqref{eq:Gtheta}.
We also recall that
$\Phi_a(x):=(\phi * g_{a/2})(x), \Psi_{1-b}(y)=(\Psi* g_{(1-b)/2} )(y)$.

\smallskip

Note that we obtain an upper bound if we replace $\phi$ by $|\phi|$,
so we may assume that $\phi \ge 0$.
Similarly, we may replace $\psi$ by the constant $|\psi|_\infty$,
and we take $|\psi|_\infty \le 1$ for simplicity.
We thus bound $\cI^{(m)}(\phi, \psi) \leq \cI^{(m)}(\phi, 1)$,
with $\phi \ge 0$, and we focus on $\cI^{(m)}(\phi, 1)$.

\smallskip

We first show that, by integrating out the space variables, we can bound
\begin{equation}\label{eq:Imphipsibd2}
\begin{split}
	\cI^{(m)}(\phi, 1)  & \leq C_\phi \, J^{(m)}, \qquad \text{where} \quad
	C_\phi := |\phi|_\infty^2
	\, \int_{\R^2} \phi(z) \, \dd z \,, \qquad \text{and} \\
    J^{(m)} & := \!\!\!\!\!\!\!\!\!\!\!\!\!\!\!
	\idotsint\limits_{0 < a_1 < b_1 < \ldots < a_m < b_m < 1} \!\!\!\!\!\!\!\!\!\!\!\!\!\!\!
	G_{\theta}(b_1 - a_1) G_{\theta}(b_2 - a_2) \prod_{i=3}^{m}
	\frac{G_{\theta}(b_i - a_i)}{\sqrt{(a_i-b_{i-1})(a_i-b_{i-2})}}\ \dd \vec a\,
	\dd \vec b \,.
\end{split}	
\end{equation}
Note that in \eqref{eq:Imphipsi} we have $\Psi \equiv 1$
(by $\psi \equiv 1$)
and $y_m$ appears only in $G_{\theta}(b_m-a_m, y_m-x_m)$.
Then we can integrate out $y_m\in\R^2$ to obtain
$$
	\int_{\R^2} G_{\theta}(b_m-a_m, y_m-x_m)
	\, \dd y_m = G_{\theta}(b_m-a_m).
$$
We are then left with two factors containing $x_m$, and the
corresponding integral is
\begin{align*}
	& \int_{\R^2} g_{\frac{a_m-b_{m-1}}{2}}(x_m-y_{m-1})
	\,g_{\frac{a_m-b_{m-2}}{2}}(x_m-y_{m-2}) \dd x_m
	=  g_{\frac{2a_m-b_{m-1}-b_{m-2}}{2}}(y_{m-1}-y_{m-2}) \\
	& \quad
	\leq g_{\frac{2a_m-b_{m-1}-b_{m-2}}{2}}(0) = \frac{1}{\pi(2a_m-b_{m-1}-b_{m-2})}
	\leq \frac{1}{2\pi}
	\, \frac{1}{\sqrt{(a_m-b_{m-1})(a_m-b_{m-2})}} \,,
\end{align*}
having used $\alpha \beta \le \frac{1}{2}(\alpha^2+ \beta^2)$ in
the last inequality.

We now iterate. Integrating out each
$y_i$, for $i \ge 2$, replaces $G_{\theta}(b_i-a_i, y_i-x_i)$
by $G_{\theta}(b_i-a_i)$, while integrating out each $x_i$, for $i\geq 3$,
replaces $g_{\frac{a_i - b_{i-2}}{2}}(x_i - y_{i-2}) \,
g_{\frac{a_i - b_{i-1}}{2}}(x_i - y_{i-1})$ by
$(2\pi \sqrt{(a_i-b_{i-1})(a_i-b_{i-2})})^{-1}$.
This leads to
\begin{equation}\label{eq:Imphipsibd1}
\begin{split}
	\cI^{(m)}(\phi, 1)  & \leq \frac{1}{(2\pi)^{m-2}} \,
	\idotsint\limits_{\substack{0 < a_1 < b_1 < a_2 < b_2 < \ldots < a_m < b_m < 1 \\
	x_1, y_1, x_2 \in \R^2}}
	\dd \vec a\,\, \dd \vec b\,\, \dd x_1 \, \dd y_1\, \dd x_2 \\
	& \qquad\qquad\ \
	\Phi_{a_1}^2(x_1) \, \Phi_{a_2}(x_2) \,
	g_{\frac{b_1-a_1}{4}}(y_1 - x_1)
	\, g_{\frac{a_2 - b_1}{2}}(x_2 - y_1) \, \cdot
	\\
	& \qquad\qquad\ \cdot\ G_{\theta}(b_1 - a_1) \, G_{\theta}(b_2 - a_2) \, \prod_{i=3}^{m}
	\frac{G_{\theta}(b_i - a_i)}{\sqrt{(a_i-b_{i-1})(a_i-b_{i-2})}}\
	\,.
\end{split}	
\end{equation}
We finally bound $\Phi_{a_2}(x_2) \le |\phi|_\infty$,
see \eqref{eq:Phiax},
then perform the integrals over $x_2$ and $y_1$, which both give $1$,
and note that $\int_{\R^2} \Phi_{a_1}(x_1)^2 \, \dd x_1
\le |\phi|_\infty \int_{\R^2}
\phi(z) \, \dd z$, which yields \eqref{eq:Imphipsibd2}.

\smallskip

We can now bound the quantity in Lemma~\ref{lem:Imconv}
using \eqref{eq:Imphipsibd2}, to get
\begin{equation}\label{ImJm}
	\sum_{m=2}^\infty a^m |\cI^{(m)}_t(\phi, \psi)| \leq
	(2\pi)^2\, C_\phi \sum_{m=2}^\infty \Big(\frac{a}{2\pi}\Big)^m J^{(m)} \,.
\end{equation}
It remains to show that $J^{(m)}$ decay super-exponentially fast.
For any $\lambda>0$, we have
$$
J^{(m)} \leq e^{\lambda} \!\!\!\!\!\!\!\!\!\!\!\!\!\!\!
	\idotsint\limits_{0 < a_1 < b_1 < \ldots < a_m < b_m < 1} \!\!\!\!\!\!\!\!\!\!\!\!\!\!\!
	e^{-\lambda b_1} G_{\theta}(b_1 - a_1) e^{-\lambda (b_2-b_1)} G_{\theta}(b_2 - a_2) \prod_{i=3}^{m}
	\frac{e^{-\lambda(b_i-b_{i-1})}G_{\theta}(b_i - a_i)}{\sqrt{(a_i-b_{i-1})(a_i-b_{i-2})}}\ \dd \vec a\, \dd \vec b.
$$
Denote $u_i:=a_i-b_{i-1}$ and $v_i:=b_i-a_i$ for $1\leq i\leq m$, where $b_0:=0$.
Then observe that $a_i-b_{i-2}=u_{i-1}+v_{i-1}+u_i\geq u_{i-1}+u_i$.
Since $b_i - b_{i-1} \ge v_i$, we can bound $J^{(m)}$ by
\begin{equation}\label{eq:Jmbd}
\begin{aligned}
J^{(m)} & \leq e^{\lambda}
	\idotsint\limits_{u_i,v_i\in (0,1) \atop \sum_{i=1}^m (u_i+v_i)<1}
    \Bigg\{ \prod_{i=1}^m e^{-\lambda v_i} G_{\theta}(v_i) \Bigg\}
	\,   \Bigg\{
	\prod_{i=3}^m \frac{1}{\sqrt{(u_{i-1}+u_i) u_i}}
	\Bigg\} \, \dd \vec u\, \dd \vec v \\
	& \leq e^{\lambda}
	\, \bigg( \int_0^1 e^{-\lambda v} \, G_{\theta}(v) \, \dd v\bigg)^m
	\
	\idotsint\limits_{u_2, \ldots, u_m\in (0,1)} \,
	\prod_{i=3}^m
	\frac{1}{\sqrt{u_i(u_i+u_{i-1})}}\ \dd \vec u \\
	& \le e^\lambda \,
	\bigg(\int_0^1 e^{-\lambda v}  \, \hat G_{\theta}(v) \, \dd v\bigg)^m  \,
	\int_0^1 \phi^{(m-2)}(u_2) \, \dd u_2,
\end{aligned}
\end{equation}
where in the last inequality we have bounded
$G_{\theta}(\cdot) \le \hat G_{\theta}(\cdot)$, see \eqref{eq:Gthetaunif},
and we define
\begin{equation} \label{eq:phik}
	\phi^{(0)}(u) := 1 \,, \qquad \text{and} \qquad
	\phi^{(k)}(u) := \int_0^1 \frac{ 1 }{\sqrt{s(s + u)}}
	\, \phi^{(k-1)}(s) \,  \dd s \,,
	\qquad \forall k \in \N \,.
\end{equation}
We will show the following results.

\begin{lemma}\label{lem:GthIntegral}
There is a constant $\sfc_\theta < \infty$ such that
for every $\lambda \ge 1$
\begin{equation} \label{eq:Clambda}
	\int_0^1 e^{-\lambda v} \, \hat G_{\theta}(v)
	\, \dd v \leq C_\lambda:= \frac{\sfc_\theta}{2 + \log \lambda} \,.
\end{equation}
\end{lemma}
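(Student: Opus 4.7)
The plan is to use the explicit form from Proposition~\ref{lem:Gthetaas}, namely
\begin{equation*}
\hat G_\theta(v) \,=\, \frac{c_\theta}{v\,(2+\log(1/v))^2} \,,
\end{equation*}
and to split the integral at the natural scale $v=1/\lambda$, which is where the exponential factor $e^{-\lambda v}$ starts to matter.

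On the inner range $v\in(0,1/\lambda]$ the exponential is harmless and I would simply bound $e^{-\lambda v}\le 1$, then compute the remaining integral exactly. The change of variable $w=2+\log(1/v)$ satisfies $\dd w=-\dd v/v$, sending $v=1/\lambda$ to $w=2+\log\lambda$ and $v=0^+$ to $w=\infty$, so
\begin{equation*}
\int_0^{1/\lambda} \hat G_\theta(v)\,\dd v \,=\, c_\theta\int_{2+\log\lambda}^{\infty}\frac{\dd w}{w^2} \,=\, \frac{c_\theta}{2+\log\lambda}.
\end{equation*}
This is exactly the main term of the claimed bound.

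For the outer range $v\in[1/\lambda,1]$ I would perform the substitution $u=\lambda v$, which rewrites the contribution as $c_\theta\int_1^{\lambda} e^{-u}/[u\,(2+\log\lambda-\log u)^2]\,\dd u$, and use the rapid decay of $e^{-u}$ to show this is \emph{lower order} than $1/\log\lambda$. A secondary split at $u=\sqrt\lambda$ handles both subregions cleanly: on $[1,\sqrt\lambda]$ we have $\log u\le\tfrac12\log\lambda$, so the denominator is at least $(2+\tfrac12\log\lambda)^2$ and the contribution is bounded by a constant times $(\log\lambda)^{-2}\int_1^\infty e^{-u}/u\,\dd u=O((\log\lambda)^{-2})$; on $[\sqrt\lambda,\lambda]$ we simply bound $e^{-u}\le e^{-\sqrt\lambda}$ and pick up at most a $\tfrac12\log\lambda$ from $\int \dd u/u$, giving an $O(e^{-\sqrt\lambda}\log\lambda)$ term. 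Both pieces are $o(1/(2+\log\lambda))$, so the outer range contributes strictly less than the inner one.

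Combining the two pieces and inflating the constant to absorb both the subleading outer contribution and the $O(1)$ behavior in any bounded interval $\lambda\in[1,\lambda_0]$, one obtains the required bound $\sfc_\theta/(2+\log\lambda)$. There is essentially no obstacle beyond the bookkeeping of the splits: the inner range gives the sharp leading order automatically, and the exponential $e^{-u}$ trivially beats the logarithmic weight on the outer range. If one prefers a more conceptual proof, the same estimate can be derived by integration by parts using the identity $\hat G_\theta(v)=c_\theta\,\tfrac{\dd}{\dd v}\bigl[1/(2+\log(1/v))\bigr]$, which reduces the problem to controlling $\lambda\int_0^1 e^{-\lambda v}/(2+\log(1/v))\,\dd v$ by the same kind of range splitting, but the direct approach above is the most transparent.
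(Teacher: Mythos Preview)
Your proof is correct and follows the same overall strategy as the paper: split the integral at $v=1/\lambda$ and compute the inner piece exactly via the substitution $w=2+\log(1/v)$, obtaining $c_\theta/(2+\log\lambda)$. The only difference is in the treatment of the outer range $[1/\lambda,1]$: you substitute $u=\lambda v$ and do a secondary split at $\sqrt{\lambda}$ to show this piece is $o(1/\log\lambda)$, whereas the paper exploits the monotonicity of $\hat G_\theta$ to bound
\[
\int_{1/\lambda}^1 e^{-\lambda v}\,\hat G_\theta(v)\,\dd v
\;\le\; \hat G_\theta(1/\lambda)\int_{1/\lambda}^\infty e^{-\lambda v}\,\dd v
\;=\; \hat G_\theta(1/\lambda)\,\tfrac{e^{-1}}{\lambda}
\;\le\; e^{-1}\int_0^{1/\lambda}\hat G_\theta(v)\,\dd v,
\]
which immediately gives the total bound $(1+e^{-1})\,c_\theta/(2+\log\lambda)$ with no secondary split and no need to adjust constants for bounded~$\lambda$. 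Your argument gives slightly more information (the outer piece is genuinely lower order), but the paper's route is shorter.
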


\begin{lemma}\label{lem:phikbd}
For all $k\in\N$, the function $\phi^{(k)}(\cdot)$ is decreasing on $(0,1)$
and satisfies
\begin{equation} \label{eq:estphiklast}
	\phi^{(k)}(v) \le 32^k  \sum_{i=0}^k \frac{1}{i!}
	\bigg( \frac{1}{2} \log \frac{e^2}{v}\bigg)^{i} \le
	\displaystyle 32^k  \frac{e}{\sqrt{v}} \,,
	\qquad \forall v \in (0,1) \,.
\end{equation}
\end{lemma}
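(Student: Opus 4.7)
The plan is to treat the three assertions (monotonicity, the truncated-exponential bound, and the $e/\sqrt{v}$ bound) as a chain, proving the monotonicity first, then the main estimate by induction, and finally deriving the coarse bound as a corollary.

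\smallskip

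\emph{Step 1: Monotonicity.} I would argue by induction on $k$ using that, for every fixed $s>0$, the kernel $u \mapsto 1/\sqrt{s(s+u)}$ is strictly decreasing on $(0,1)$. Since $\phi^{(k-1)}\ge 0$ by the inductive hypothesis (trivial for $k=0$), the function $\phi^{(k)}(u) = \int_0^1 [1/\sqrt{s(s+u)}]\,\phi^{(k-1)}(s)\,ds$ is a nonnegative linear combination of decreasing functions of $u$, hence decreasing.

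\smallskip

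\emph{Step 2: Reduction of the second inequality to the first.} Since $\alpha(v):= \tfrac{1}{2}\log(e^{2}/v)\ge 1$ for $v\in (0,1)$, one has
\begin{equation*}
\sum_{i=0}^{k} \frac{\alpha(v)^{i}}{i!} \;\le\; \sum_{i=0}^{\infty}\frac{\alpha(v)^{i}}{i!} \;=\; e^{\alpha(v)} \;=\; \frac{e}{\sqrt{v}}\,,
\end{equation*}
so the second bound follows from the first.

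\smallskip

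\emph{Step 3: The first inequality by induction on $k$.} The base $k=0$ is the identity $\phi^{(0)}\equiv 1 = F_{0}(\alpha(v))$ with $F_{k}(x):=\sum_{i=0}^{k}x^{i}/i!$. For the inductive step, assuming $\phi^{(k)}(s)\le 32^{k}F_{k}(\alpha(s))$, apply the key substitution $s = u\sinh^{2}\rho$, $\rho\in (0,\rho_{u})$, with $\rho_{u}:=\sinh^{-1}(1/\sqrt{u})$; the remarkable identity $ds/\sqrt{s(s+u)} = 2\,d\rho$ yields
\begin{equation*}
\phi^{(k+1)}(u) \;\le\; 2\cdot 32^{k}\int_{0}^{\rho_{u}} F_{k}\bigl(\alpha(u) - \log\sinh\rho\bigr)\,d\rho.
\end{equation*}
I would then invoke the exact expansion $F_{k}(x+y) = \sum_{j=0}^{k}(x^{j}/j!)\,F_{k-j}(y)$ (immediate from the binomial theorem, applied monomial by monomial) with $x=\alpha(u)$ and $y=-\log\sinh\rho$, reducing the task to bounding
\begin{equation*}
\sum_{j=0}^{k}\frac{\alpha(u)^{j}}{j!}\,B_{k-j}(u) \;\le\; 32\,F_{k+1}(\alpha(u)), \qquad B_{m}(u):= 2\int_{0}^{\rho_{u}}F_{m}(-\log\sinh\rho)\,d\rho.
\end{equation*}
To estimate $B_{m}(u)$ I would split at $\rho_{\star}:=\log(1+\sqrt{2})$, where $\sinh\rho_{\star}=1$. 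On $(0,\rho_{\star}]$ the change of variables $r=-\log\sinh\rho$ satisfies $|d\rho/dr|\le e^{-r}$, which turns this piece into $\int_{0}^{\infty}F_{m}(r)e^{-r}\,dr = \sum_{i=0}^{m}\int_{0}^{\infty} r^{i}e^{-r}/i!\,dr = m+1$, using the elementary Gamma integral $\int_{0}^{\infty}r^{i}e^{-r}dr=i!$. On $[\rho_{\star},\rho_{u}]$ I would use $\sinh\rho\ge e^{\rho}/4$ to bound $|\log\sinh\rho|\le \rho + \log 4$, and then estimate $F_{m}$ in absolute value by $F_{m}(\rho+\log 4)$, which integrates over $[\rho_{\star},\rho_{u}]$ to a polynomial-in-$\alpha$ expression dominated by $F_{m+1}(\alpha)$. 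Feeding these two pieces back into the convolution sum and using $\sum_{j,l\ge 0,\,j+l\le k+1}\alpha^{j+l}/(j!\,l!)\le F_{k+1}(2\alpha)\le 2^{k+1}F_{k+1}(\alpha)$ to collapse the double sum yields the inductive bound $32\,F_{k+1}(\alpha(u))$.

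\smallskip

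The main obstacle is the last step: the various crude bounds along the way must compound into a constant no larger than $32$. The two features that make this possible are the linearising substitution $s=u\sinh^{2}\rho$, which removes the kernel singularity at the cost of a mild boundary in $\rho$, and the convolution identity for truncated exponentials, which separates the $\alpha(u)$- and $\rho$-dependences and allows each piece to be controlled in isolation by a Gamma-type computation. Once these pieces are in place, the rest is bookkeeping.
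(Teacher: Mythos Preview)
The gap is in Step~3. Your induction hypothesis $\phi^{(k)}(s)\le 32^{k}F_{k}(\alpha(s))$ is too coarse to close the loop with the constant $32$ (or with any fixed constant). After your sinh substitution and the convolution identity you need
\[
\sum_{j=0}^{k}\frac{\alpha^{j}}{j!}\,B_{k-j}(u)\;\le\;32\,F_{k+1}(\alpha),
\]
but this fails at both ends of the range of $\alpha$. Take $u=1$, so $\alpha=1$ and $\rho_{u}=\rho_{\star}$; then piece~2 vanishes and your own estimate gives $B_{m}\le 2(m{+}1)$, hence the left side is $2\sum_{j=0}^{k}(k{-}j{+}1)/j!\sim 2ke$ as $k\to\infty$, while $32\,F_{k+1}(1)\to 32e$ --- the inequality fails for $k\ge 16$. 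At the other end (small $u$, large $\alpha$), piece~2 feeds into $\sum_{j=0}^{k}\frac{\alpha^{j}}{j!}F_{k-j+1}(\alpha)=F_{k+1}(2\alpha)-\frac{\alpha^{k+1}}{(k+1)!}$, and you explicitly invoke $F_{k+1}(2\alpha)\le 2^{k+1}F_{k+1}(\alpha)$; that factor $2^{k+1}$ cannot be absorbed into $32$. So the ``bookkeeping'' at the end is not a detail but the crux, and it does not go through.

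The paper circumvents this by \emph{not} inducting on the aggregate bound $32^{k}F_{k}(\alpha)$. It carries the finer ansatz $\phi^{(k)}(v)\le\sum_{i=0}^{k}\frac{c_{k,i}}{2^{i}i!}(\log\tfrac{e^{2}}{v})^{i}$, proves a monomial-by-monomial integral estimate (Lemma~\ref{th:intlog}) that yields the linear recursion $c_{k,i}=2\sum_{j\ge(i-1)^{+}}c_{k-1,j}$, and only then bounds each $c_{k,i}\le 32^{k}$ by a separate path-counting argument (Lemma~\ref{lem:Sk}). The recursion moves mass between different powers of $\log\tfrac{e^{2}}{v}$ in a controlled way; collapsing everything to $F_{k}$ destroys exactly the information needed to see this. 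Your sinh substitution and the convolution identity for $F_{k}$ are elegant, but to make an inductive scheme work here you would have to track a vector of coefficients rather than their sum.
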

With Lemmas \ref{lem:GthIntegral} and \ref{lem:phikbd}, it follows
from \eqref{eq:Jmbd} that
\begin{equation}\label{eq:Jmexpbd}
	J^{(m)}
	\le e^{\lambda} \, C_\lambda^m
	\, 32^{m-2} \int_0^1 \frac{e}{\sqrt{v}} \, \dd v
	\le e^{\lambda} \, C_\lambda^m
	\, 32^{m-2} \, 2 e
	\le e^{\lambda} \, (32 \, C_\lambda) ^m \,.
\end{equation}
If we choose $\lambda=m$, then by \eqref{ImJm} and the definition \eqref{eq:Clambda}
of $C_\lambda$ we get
\begin{equation} \label{eq:Jmex2}
	\sum_{m=2}^\infty a^m \, |\cI^{(m)}_t(\phi, \psi)|
	\leq C_\phi \sum_{m=2}^\infty a^m J^{(m)} \leq
	C_\phi \sum_{m=2}^\infty
	\bigg( \frac{32 \, a \, e \, \sfc_\theta}{2+\log m}
	\bigg)^m < \infty \,,
\end{equation}
which concludes the proof of Lemma \ref{lem:Imconv}.
\qed

\medskip

It remains to prove Lemmas \ref{lem:GthIntegral} and \ref{lem:phikbd}.

\medskip

\begin{proof}[Proof of Lemma \ref{lem:GthIntegral}]
Recall that $\hat G_{\theta}(\cdot)$ is defined in \eqref{eq:Gthetaunif}
and it is decreasing. Then
\begin{equation*}
	\int_{\frac{1}{\lambda}}^1 e^{-\lambda v} \, \hat G_{\theta}(v) \, \dd v \le
	\hat G_{\theta}(\tfrac{1}{\lambda}) \int_{\frac{1}{\lambda}}^\infty e^{-\lambda v} \, \dd v
	= \hat G_{\theta}(\tfrac{1}{\lambda}) \, \frac{e^{-1}}{\lambda}
	\le e^{-1} \, \int_0^{\frac{1}{\lambda}} \hat G_{\theta}(v) \, \dd v \,,
\end{equation*}
hence
\begin{equation} \label{eq:Ghatint}
\begin{split}
	\int_0^1 e^{-\lambda v} \, \hat G_{\theta}(v) \, \dd v
	\le (1 + e^{-1}) \int_0^{\frac{1}{\lambda}} \hat G_{\theta}(v)
	\, \dd v
	= \frac{(1+e^{-1}) \, c_{\theta}}{2 + \log \lambda}
	\le \frac{2 \, c_{\theta}}{2 + \log \lambda} \,.
\end{split}
\end{equation}
We have proved that \eqref{eq:Clambda} holds,
provided we chose $\sfc_\theta := 2 \, c_{\theta}$.
\end{proof}

\medskip

\begin{proof}[Proof of Lemma~\ref{lem:phikbd}]
The second inequality in \eqref{eq:estphiklast} follows from
$\sum_{i=0}^k \frac{x^i}{i!}\leq e^x$.

\smallskip

Let us prove the first inequality in \eqref{eq:estphiklast}.
Recall the definition \eqref{eq:phik} of $\phi^{(k)}$. Then
\begin{equation} \label{eq:estphi1}
\begin{split}
	& \phi^{(1)}(v)
	= \int_0^{1} \frac{\dd s}{\sqrt{s(s + v)}} =
	\int_0^{\frac{1}{v}} \frac{\dd z}{\sqrt{z(z + 1)}}
	\le \int_0^{1} \frac{\dd z}{\sqrt{z}}
	+ \int_{1}^{\frac{1}{v}} \frac{\dd z}{z}
	= 2 + \log\frac{1}{v}\,.
\end{split}
\end{equation}
To iterate this argument and bound $\phi^{(k)}$, we claim that
\begin{equation} \label{eq:estphik}
	\phi^{(k)}(v) \le
	\sum_{i=0}^k \frac{c_{k,i}}{2^i \, i!} \,
	\bigg(\log \frac{e^2}{v}\bigg)^{i}  \qquad
	\forall v \in (0,1) \,,
\end{equation}
for suitable choices of the coefficients $c_{k,i}$. For $k=1$, we see from \eqref{eq:estphi1} that
\begin{equation} \label{eq:c1}
	c_{1,0} = 0 \,, \qquad c_{1,1} = 2 \,.
\end{equation}
Inductively, we assume that \eqref{eq:estphik} holds for $k-1$ and we will deduce it for $k$.
Note that plugging \eqref{eq:estphik} for $k-1$ into \eqref{eq:phik} gives
\begin{equation}\label{eq:phikrec1}
	\phi^{(k)}(v) \le
	\sum_{j=0}^{k-1}
	\frac{c_{k-1,j}}{2^j \, j!} \int_0^{1}
	\frac{\big(\log \frac{e^2}{s}\big)^{j}}{\sqrt{s(s + v)}} \, \dd s \,.
\end{equation}
To identify $c_{k, i}$ for $0\leq i\leq k$, we need the following Lemma, proved later.

\begin{lemma}\label{th:intlog}
For all $k \in \N_0$, we have
\begin{equation} \label{eq:intlog}
	\int_0^{1}
	\frac{\big(\log \frac{e^2}{s}\big)^k}{\sqrt{s(s + v)}} \, \dd s
	\le 2^{k+1}  k! \sum_{i=0}^{k+1}
	\frac{\big(\log \tfrac{e^2}{v}\big)^{i}}{2^i \, i!}
	 \qquad \forall v \in (0,1)\,.
\end{equation}
\end{lemma}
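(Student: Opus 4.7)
The plan is to split the integral at $s=v$ (valid since $v \in (0,1)$) and estimate each piece separately, using the simple bounds $\sqrt{s(s+v)} \ge \sqrt{sv}$ on $(0,v)$ and $\sqrt{s(s+v)} \ge s$ on $(v,1)$. These bounds exploit which of $s,v$ dominates in the factor $s+v$, and they are sharp up to constants, so one expects to obtain the correct asymptotics.

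First I would treat the easier piece $\int_v^1 (\log(e^2/s))^k / s \, \dd s$. The substitution $u = \log(e^2/s)$ turns it into $\int_2^{\log(e^2/v)} u^k \, \dd u \le (\log(e^2/v))^{k+1}/(k+1)$, which accounts precisely for the top term $i=k+1$ in the claimed sum, since $\frac{1}{k+1} = 2^{k+1} k! \cdot \frac{1}{2^{k+1}(k+1)!}$.

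Next, for the harder piece $\int_0^v (\log(e^2/s))^k / \sqrt{sv}\,\dd s$, I would rescale $s = v t$ to get $\int_0^1 (\log(e^2/v) - \log t)^k / \sqrt{t}\,\dd t$ (note $-\log t \ge 0$ on $(0,1)$, so the integrand is nonnegative and we may expand by the binomial theorem). After the binomial expansion, the key computation is the elementary moment
\begin{equation*}
\int_0^1 \frac{(-\log t)^j}{\sqrt{t}} \, \dd t = \int_0^\infty u^j e^{-u/2}\,\dd u = 2^{j+1} j!,
\end{equation*}
via the change of variables $t = e^{-u}$. Combining this with $\binom{k}{j} \cdot 2^{j+1} j! = 2^{k+1} k! \cdot 2^{-(k-j)}/(k-j)!$ and re-indexing $i = k-j$ yields precisely $2^{k+1} k! \sum_{i=0}^{k} (\log(e^2/v))^i / (2^i i!)$.

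Finally I would add the two bounds: the contribution from $(0,v)$ supplies the terms $i=0,\ldots,k$ of the claimed sum, while the contribution from $(v,1)$ supplies exactly the missing term $i=k+1$, giving the asserted inequality. The only mildly delicate point is bookkeeping the constants so the two pieces match the stated bound exactly (rather than with an unspecified multiplicative factor); this is why the normalization $\log(e^2/v)$ instead of $\log(1/v)$ is used — it ensures positivity of each term and makes the endpoint $s=1$ contribute the clean subtraction $2^{k+1}$ that can be absorbed trivially. No truly hard step is involved; the calculation is essentially the careful execution of these elementary estimates.
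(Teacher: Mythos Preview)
Your proposal is correct and follows essentially the same approach as the paper: both split the integral at $s=v$ (the paper does this after rescaling $s=vz$, so the split is at $z=1$), and both use the bounds $\sqrt{s(s+v)}\ge\sqrt{sv}$ on $(0,v)$ and $\sqrt{s(s+v)}\ge s$ on $(v,1)$. The only differences are in the elementary evaluation of each piece --- you use a binomial expansion and the moment $\int_0^1(-\log t)^j t^{-1/2}\,\dd t=2^{j+1}j!$ for the small-$s$ part, whereas the paper routes this through an incomplete Gamma / Poisson-tail identity; your direct computation is arguably cleaner, but the strategy is identical.
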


If we plug \eqref{eq:intlog} with $k=j$ into \eqref{eq:phikrec1}
we get that, for all $v \in (0,1)$,
\begin{equation*}
\begin{split}
	\phi^{(k)}(v)
	& \le
	\sum_{j=0}^{k-1}
	\frac{c_{k-1,j}}{2^j \, j!} \, \Bigg\{ 2^{j+1} \, j! \sum_{i=0}^{j+1}
	\frac{\big(\log \tfrac{e^2}{v}\big)^{i}}{2^i \, i!} \Bigg\}
	 =
	\sum_{i=0}^{k} \frac{1}{2^i \, i!}
	\Bigg(2 \sum_{j=(i-1)^+}^{k-1} c_{k-1,j} \Bigg)
	\big(\log \tfrac{e^2}{v}\big)^{i} \,.
\end{split}
\end{equation*}
This shows that \eqref{eq:estphik} indeed holds, with
\begin{equation}\label{eq:rec}
\begin{split}
	c_{k,i} & = 2 \sum_{j=(i-1)^+}^{k-1} c_{k-1,j} \,.
\end{split}
\end{equation}
We have the following combinatorial bound on the coefficients $c_{k, i}$,
which we prove later by comparing with the number of paths for a suitable random walk.

\begin{lemma}\label{lem:Sk}
For every $k\in\N$ and $i \in \{0,\ldots,k\}$ we have
$c_{k, i} \le 32^k$.
\end{lemma}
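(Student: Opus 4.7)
The plan is to unfold the recursion \eqref{eq:rec} explicitly and interpret the resulting sum as counting lattice paths of a random walk, following the hint. Iterating the recursion $k-1$ times in order to reduce the second index down to $1$, and using the base case $c_{1,0}=0,\, c_{1,1}=2$ to pick out the single terminal value $j_{k-1}=1$, one obtains
\[
c_{k,i} \;=\; 2^{k-1}\!\!\!\sum_{j_1,\ldots,j_{k-1}} c_{1, j_{k-1}}
\;=\; 2^{k}\,\bigl|\mathcal P_{k,i}\bigr|,
\]
where $\mathcal P_{k,i}$ is the set of sequences $(j_0, j_1, \ldots, j_{k-1})$ with $j_0 := i$, $j_{k-1} = 1$, and $(j_{m-1}-1)^+ \le j_m \le k-m$ for every $1 \le m \le k-1$.

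I would then view each element of $\mathcal P_{k,i}$ as a lattice path whose increments $\epsilon_m := j_m - j_{m-1}$ are at least $-1$, and perform the change of variables $\tilde\epsilon_m := \epsilon_m + 1 \ge 0$. Summing the shifted increments gives
\[
\tilde\epsilon_1 + \tilde\epsilon_2 + \cdots + \tilde\epsilon_{k-1} \;=\; (k-1) + (j_{k-1}-j_0) \;=\; k - i,
\]
so $|\mathcal P_{k,i}|$ is bounded by the number of nonnegative integer solutions of this equation, which by stars-and-bars equals $\binom{2k-i-2}{k-2} \le 2^{2k-2} = 4^{k-1}$. The remaining constraint $j_m \ge 0$ can only shrink the set, while the upper bound $j_m \le k-m$ translates to $\sum_{\ell=1}^m \tilde\epsilon_\ell \le k-i$ and is automatic (being implied by $\tilde\epsilon_\ell \ge 0$ and the total sum being $k-i$). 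Combining the two factors,
\[
c_{k,i} \;\le\; 2^k \cdot 4^{k-1} \;\le\; 8^k \;\le\; 32^k,
\]
which is the desired bound, with substantial room to spare.

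I do not foresee any serious obstacle. The recursion is linear, so unfolding it is mechanical, and once the lattice-path picture is in place the combinatorial bound reduces to stars-and-bars. The only delicate point is verifying that every intermediate index $j_m$ produced by the iteration remains inside the valid range of $c_{k-m,\cdot}$, which is covered by the two observations above; edge cases such as $k=1$ or $i\in\{0,k\}$ are immediate. The generous slack between the proved bound $8^k$ and the stated $32^k$ simply reflects a convenient constant choice in \eqref{eq:estphiklast}.
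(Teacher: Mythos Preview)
Your argument is correct, and it is genuinely different from (and sharper than) the paper's proof. Both proofs begin by unfolding the recursion to write $c_{k,i}=2^k$ times the cardinality of a set of lattice paths. The paper then encodes each such path as a nearest-neighbour walk on an alphabet of four symbols (by breaking large jumps into unit steps plus a marked $0^*$), which produces paths of length up to $2k$ and hence the cruder bound $|\cS_k(i)|\le 16^k$, giving $c_{k,i}\le 32^k$. You instead observe that the one-sided increment constraint $\epsilon_m\ge -1$ becomes, after the shift $\tilde\epsilon_m=\epsilon_m+1$, a nonnegativity constraint with a fixed total sum $k-i$; stars-and-bars then gives the exact count $\binom{2k-i-2}{k-2}\le 4^{k-1}$, so $c_{k,i}\le 2^k\cdot 4^{k-1}\le 8^k$. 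The crucial point that the upper bounds $j_m\le k-m$ are automatically satisfied (because partial sums of nonnegative terms cannot exceed the total) is what makes your count exact up to the dropped nonnegativity constraint, and this is the insight the paper's encoding does not exploit. Your route is more elementary, avoids the auxiliary path construction, and yields the stronger constant $8^k$; the paper's constant $32^k$ was merely convenient for the downstream estimates and your improvement would propagate harmlessly there.
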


\noindent
Plugging this bound into \eqref{eq:estphik} we obtain,
for all $k\in\N$ and $v \in (0,1)$,
\begin{equation} \label{eq:estphiklast-}
	\phi^{(k)}(v) \le 32^k \, \sum_{i=0}^k \frac{1}{i!} \,
	\bigg(\frac{1}{2} \, \log \frac{e^2}{v}\bigg)^{i} \,,
\end{equation}
which is the first inequality in \eqref{eq:estphiklast}.
This concludes the proof of Lemma \ref{lem:phikbd}.
\end{proof}

\medskip

It remains to prove Lemmas~\ref{th:intlog} and~\ref{lem:Sk}.

\medskip

\begin{proof}[Proof of Lemma~\ref{th:intlog}]
By a change of variable $s=vz$,
\begin{equation}\label{eq:AB}
	\int_0^{1}
	\frac{\big(\log \frac{e^2}{s}\big)^k}{\sqrt{s(s + v)}} \, \dd s =
	\int_0^{\frac{1}{v}}
	\frac{\big(\log \frac{e^2}{v z}\big)^k}{\sqrt{z (z + 1)}} \, \dd z \le
	\underbrace{\int_0^{1}
	\frac{\big(\log \frac{e^2}{v z}\big)^k}{\sqrt{z}} \, \dd z}_{A} \, + \,
	\underbrace{\int_1^{\frac{1}{v}}
	\frac{\big(\log \frac{e^2}{v z}\big)^k}{z} \, \dd z}_{B}  \,.
\end{equation}
Let us look at $B$: the change of variable $z = v^{\alpha-1}$,
with $\alpha \in (0,1)$, gives
\begin{equation*}
	B = \int_0^1 \frac{\big(\log \frac{e^2}{v^{\alpha}}\big)^k}
	{v^{\alpha - 1}} \,
	v^{\alpha-1} \, \log\tfrac{1}{v} \, \dd \alpha
	= \log\tfrac{1}{v} \int_0^1 \big(2 +
	\alpha \log\tfrac{1}{v}\big)^k \, \dd \alpha \leq \tfrac{1}{k+1} \big(
	\log\tfrac{e^2}{v}\big)^{k+1}.
\end{equation*}
We now look at $A$: the change of variable $z = x \, e^2 /  v$, with
$x \in (0, \frac{v}{e^2})$, followed by $x = e^{-2y}$, with $y \in (\frac{1}{2}
\log \frac{e^2}{v}, \infty)$, yields
\begin{equation*}
\begin{split}
	A &= \frac{e}{\sqrt{v}} \int_0^{\frac{v}{e^2}}
	\frac{\big(\log \frac{1}{x}\big)^k}{\sqrt{x}} \, \dd x
	= \frac{e}{\sqrt{v}} \int_{\frac{1}{2}\log \frac{e^2}{v}}^\infty
	\frac{(2y)^k}{e^{-y}} \, 2 \, e^{-2y} \, \dd y
	= \frac{e \, 2^{k+1}}{\sqrt{v}} \int_{\frac{1}{2}\log \frac{e^2}{v}}^\infty
	y^k \, e^{-y} \, \dd y \,.
\end{split}
\end{equation*}
Let $(\bsN_t)_{t\geq 0}$ be a Poisson process with intensity one,
and let $(X_i)_{i\geq 1}$ denote its jump sizes,
which are i.i.d.\ exponential variables with parameter one.
For all $t \ge 0$ we can write
\begin{equation} \label{eq:gammaint}
\begin{split}
	\int_t^\infty y^k \, e^{-y} \, \dd y
	&=\Gamma(k+1)\, \P\bigg( \sum_{i=1}^k X_i\geq t\bigg)
	=\Gamma(k+1)\ \P\big( {\rm \bsN}_t\leq k\big)
	= e^{-t} \sum_{i=0}^k \frac{k!}{i!} \, t^{i} \,.
\end{split}
\end{equation}
Choosing
$t = \frac{1}{2}\log \frac{e^2}{ v}$, it follows that
\begin{equation}\label{eq:Aid}
\begin{split}
	A = \frac{e \, 2^{k+1}}{\sqrt{v}}
	e^{- \frac{1}{2}\log \frac{e^2}{v}}
	\sum_{i=0}^k \frac{k!}{i!} \, \big(\tfrac{1}{2}\log \tfrac{e^2}{v}\big)^{i}
	= \sum_{i=0}^k \frac{k!}{i!} \, 2^{k+1-i} \,
	\big(\log \tfrac{e^2}{v}\big)^{i} \,.
\end{split}
\end{equation}
We have thus shown that
\begin{equation*}
	A + B = \sum_{i=0}^{k+1} \frac{k!}{i!} \, 2^{k+1-i} \,
	\big(\log \tfrac{e^2}{v}\big)^{i}\,,
\end{equation*}
which coincides with the RHS of \eqref{eq:intlog}.
\end{proof}

\medskip

\begin{proof}[Proof of Lemma~\ref{lem:Sk}]
We iterate the recursion relation \eqref{eq:rec}, to get
\begin{equation} \label{eq:ckiter}
\begin{split}
	c_{k,i}
	& = 2 \sum_{j_{k-1}=(i-1)^+}^{k-1} c_{k-1,j_{k-1}}
	= 4 \sum_{j_{k-1}=(i-1)^+}^{k-1} \
	\sum_{j_{k-2}=(j_{k-1}-1)^+}^{k-2} c_{k-2,j_{k-2}} = \cdots \\
	& = 2^{k-1} \sum_{j_{k-1}=(i-1)^+}^{k-1} \
	\sum_{j_{k-2}=(j_{k-1}-1)^+}^{k-2} \ldots
	\sum_{j_{2} = (j_{3}-1)^+}^{2}
	\ \sum_{j_{1} = (j_{2}-1)^+}^{1} c_{1, j_{1}} \,.
\end{split}
\end{equation}
Since $c_{1, 1} = 2$ and $c_{1, 0} = 0$, see \eqref{eq:c1}, we can restrict to $j_1 = 1$. Also observe that
\begin{equation*}
	j_i \ge (j_{i+1} - 1)^+ \qquad \text{if and only if}
	\qquad j_i \ge 0 \ \ \text{and} \ \ j_{i+1} \le j_i + 1 \,,
\end{equation*}
hence we can reverse the order of the sums in \eqref{eq:ckiter} and write
\begin{equation} \label{eq:estck}
	c_{k,i} = 2^{k} \, \big| \cS_{k}(i) \big| \,,
\end{equation}
where $|\cS_{k}(i)|$ denotes the cardinality of the set
\begin{equation}\label{eq:set0}
	\cS_{k}(i) := \big\{(j_1, \ldots, j_k) \in \N_0^k: \
	j_1 = 1, \ j_k = i, \ j_{n+1} \le j_n + 1 \ \forall\,
	n=1, \ldots, k-1 \big\} \,.
\end{equation}
In words, $\cS_{k}(i)$ is the set of non-negative integer-valued paths $(j_1, \ldots, j_k)$
that start from $j_1 = 1$, arrive at $j_k = i$, and can make upward jumps
of size at most $1$, while the downward jumps can be of arbitrary size
(with the constraint that the path is non-negative).

To complete the proof, it remains to show that
\begin{equation*}
	| \cS_{k}(i) | \le 16^k  \,.
\end{equation*}
We define a correspondence which associates to any path $\bs{j} = (j_1, \ldots, j_k) \in \cS_{k}(i)$ a
\emph{nearest neighbor path} $\bs{\ell} = (\ell_1, \ldots, \ell_n)$,
with length $n = n(\bs{j}) \in \{k, \ldots, 2k\}$,
with increments in $\{-1,0, 0^*,+1\}$, where by $0^*$ we mean an increment of size $0$
with an extra label ``$*$'' (that will be useful to get an injective map).
The correspondence is simple: whenever the path $\bs{j}$ has a downward jump
(which can be of arbitrary size), we transform it into a sequence of downward jumps of size $1$, followed by a jump
of size $0^*$.

Note that if $m = m(\bs{j})$ denotes the number of downward jumps in the path $\bs{j}$, then the new path
$\bs{\ell} = (\ell_1, \ldots, \ell_n)$ has length
\begin{equation*}
	n = n(\bs{j}) = k + (\sigma_1 + \ldots + \sigma_m) \,,
\end{equation*}
where $\sigma_i$ is the size of the $i$-th downward jump of $\bs{j}$.
The total size of downward jumps is
$$
(\sigma_1 + \ldots + \sigma_m) = \Delta^-(\bs{j}) := \sum_{i=1}^{k-1} (j_{i+1}-j_i)^- \,.
$$
Defining
$\Delta^+(\bs{j}) := \sum_{i=1}^{k-1} (j_{i+1}-j_i)^+$, we have
\begin{equation*}
	\Delta^+(\bs{j}) - \Delta^-(\bs{j}) = j_k - j_1 = i-1 \,.
\end{equation*}
However $\Delta^+(\bs{j}) \le k-1$, because the upward jumps are of size at most $1$, hence
\begin{equation*}
	\Delta^-(\bs{j}) \le (k-1) - (i-1) \le k \,,
\end{equation*}
which shows that $n = n(\bs{j}) \le 2k$, as we claimed.

Note that the correspondence $\bs{j} \mapsto \bs{\ell}$ is injective:
the original path $\bs{j}$ can be reconstructed from $\bs{\ell}$,
thanks to the labeled increments $0^*$,
which distinguishes consecutive downward jumps from a single downward jump with
the same total length.
Since the path $\bs{\ell} = (\ell_1, \ldots, \ell_n)$ has $n-1$ increments,
each of which takes four possible values, we get the desired estimate:
\begin{equation*}
	\big| \cS_{k}(i) \big| \le \sum_{n=k}^{2k} 4^{n-1}
	\le \sum_{n=1}^{2k} 4^{n-1}
	= \frac{16^{k}-1}{3} \le 16^k \,.\qedhere
\end{equation*}
\end{proof}

\subsection{Proof of Lemma \ref{lem:INmbd}}
\label{sec:INmbd}
We follow the same strategy as in the proof of Lemma~\ref{lem:Imconv}.

We first prove the exponential bound \eqref{eq:INmbound}.
We recall that $I^{(N, m)}_{Nt}(\phi, \psi):=I^{(N, m)}_{0, Nt}(\phi, \psi)$,
see \eqref{eq:prel2Q}. We may take $t=1$, $\phi\geq 0$, and $\psi\equiv 1$,
so that the last terms in \eqref{eq:prel2Q} are
$q^N_{b_{m-1}, t}(y_{m-1}, \psi) \equiv 1$, $q^N_{b_{m}, t}(y_{m}, \psi) \equiv 1$.
We can thus rewrite \eqref{eq:prel2Q} as follows:
\begin{equation} \label{eq:prel2Qb}
\begin{split}	
    I^{(N, m)}_N(\phi, 1) & :=	\frac{\sigma_N^{2m}}{N^{3}}
	\sumtwo{0 < a_1 \le b_1 < a_2 \le b_2 < \ldots < a_m \le b_m < N}
	{x_1, y_1, x_2, y_2, \ldots, x_m, y_m \in \Z^2} \!\!\!\!\!\!\!\!\!\!
	q^N_{0,a_1}(\phi,x_1)^2 \, q^N_{0,a_2}(\phi,x_2) \,\cdot
	\\
	& \qquad \cdot U_N(b_1-a_1, y_1 - x_1) \, q_{b_1, a_2}(y_1,x_2) \,
	U_N(b_2-a_2, y_2 - x_2) \,\cdot
	\\
	& \qquad  \cdot \prod_{i=3}^{m}
	\Big\{ q_{b_{i-2}, a_i}(y_{i-2},x_i) \, q_{b_{i-1}, a_i}(y_{i-1},x_i) \,
	U_N(b_i-a_i, y_i - x_i) \Big\} \,.
\end{split}
\end{equation}

Similar to \eqref{eq:Imphipsibd2}, we first prove the following
bound:
\begin{align}
	& I^{(N,m)}_N(\phi, 1)  \leq C_\phi \, J^{(N,m)} \,, \qquad
	\text{where} \label{eq:INmphipsibd2}\\
	& J^{(N,m)} := c^m \, \frac{\sigma_N^{2m}}{N^2} \!\!\!\!
	\sum\limits_{\substack{0 < a_1 \le b_1 < \ldots \ \ \\
	\ \ldots 	< a_m \le b_m < N}} \!\!\!\!
	U_N(b_1 - a_1) \, U_N(b_2 - a_2) \prod_{i=3}^{m}
	\frac{U_N(b_i - a_i)}{\sqrt{(a_i-b_{i-1})(a_i-b_{i-2})}}
	\,, \nonumber
\end{align}	
for suitable constants $C_\phi, c < \infty$.
We first note that $y_m$ appears in \eqref{eq:prel2Qb}
only in the term $U_N(b_m-a_m, y_m-x_m)$ and hence we can sum it out as
\begin{equation}\label{eq:summU}
	\sum_{y_m\in\Z^2} U_N(b_m-a_m, y_m-x_m) =: U_N(b_m-a_m).
\end{equation}
We next sum over $x_m$: since $q_{s,t}(x,y) \le
\sup_{z} q_{t-s}(z) \le \frac{c}{t-s}$, see \eqref{eq:EZptp} and
\eqref{eq:qas}, we have
\begin{equation}\label{eq:qconvbd}
\begin{aligned}
	 \sum_{x_m\in\Z^2} q_{b_{m-1},a_m}(y_{m-1}, x_m) \,
	& q_{b_{m-2},a_m}(y_{m-2}, x_m) \le
	\sup_{x,y \in \Z^2} q_{b_{m-2},a_m}(y, x)
	\\
	&
	 \leq \frac{c}{(a_m-b_{m-2})}
	\leq \frac{c}{\sqrt{(a_m-b_{m-1})(a_m-b_{m - 2})}} \,.
\end{aligned}
\end{equation}
We can now iterate,
integrating out $y_i$ for $i \ge 2$ and $x_i$ for $i \ge 3$, to obtain
\begin{align}\label{eq:INmphibd1}
 I^{(N, m)}_N(\phi, 1) & \leq\frac{c^{m-2}\sigma_N^{2m}}{N^{3}}   \!\!\!\!\!
	\sum_{0<a_1\leq b_1<a_2\cdots <a_m\leq b_m < N} \,\, \sum_{x_1,x_2,y_1\in\Z^2}
	q^N_{0,a_1}(\phi,x_1)^2 \, q^N_{0,a_2}(\phi,x_2) \,
	\\
	& \  \cdot U_N(b_1-a_1, y_1-x_1) \, q_{b_1, a_2}(y_1,x_2) \,
	U_N(b_2-a_2)\, \prod_{i=3}^{m}
        \frac{	U_N(b_i-a_i)}{\sqrt{(a_i-b_{i-1} )(a_i-b_{i-2}) }}  \,.
        \notag
\end{align}
After bounding $q^N_{0,a_2}(\phi, x_2) \le |\phi|_\infty$,
see \eqref{eq:qNavg},
the sum over $x_2$ gives $1$, because $q_{b_1, a_2}(y_1,\cdot)$
is a probability kernel. Then
the sum over $y_1$ gives $U_N(b_1- a_1)$.
Finally, the sum over $x_1$ gives
\begin{equation} \label{eq:qualaa}
	\sum_{x_1 \in \Z^2} q^N_{0,a_1}(\phi,x_1)^2
	\le |\phi|_\infty \sum_{x_1 \in \Z^2} q^N_{0,a_1}(\phi,x_1)
	= |\phi|_\infty
	\sum_{z\in\Z^2} \phi\big(\tfrac{z}{\sqrt N}\big) \leq c_\phi \, N
\end{equation}
for a suitable $c_\phi < \infty$, because $\phi$ has compact support.
This completes the proof of \eqref{eq:INmphipsibd2}.

\smallskip

Next we bound $J^{(N,m)}$ in \eqref{eq:INmphipsibd2},
similarly to the continuum analogue \eqref{eq:Jmbd}.
Namely, we denote $u_i=a_i-b_{i-1}$ and $v_i:=b_i-a_i$ for $1\leq i\leq m$, with $b_0:=0$,
we insert the factor $e^{\lambda} \prod_{i=1}^m e^{-\lambda (\frac{v_i}{N})}>1$,
and then we use $a_i-b_{i-2} \ge u_{i-1}+u_i$
to obtain the bound
\begin{equation}\label{eq:INmphibd11}
 J^{(N,m)} \leq e^\lambda \, c^m \,
 \Bigg(\sigma_N^2\sum_{v=0}^N e^{-\lambda \frac{v}{N}} U_N(v)\Bigg)^m
\,\Bigg\{ \frac{1}{N^2}   \sum_{1\leq u_1, \ldots, u_m\leq N} \, \prod_{i=3}^m
 \frac{1}{\sqrt{u_i(u_{i-1}+u_i)}} \Bigg\} \,.
\end{equation}

Note that $\sigma_N^2 \le \frac{c_1}{\log N}$, see
\eqref{eq:sigmaN}, and $U_N(u) \le c_2 \, (\ind_{\{u=0\}} + \frac{\log N}{N}
\, \hat G_{\theta}(\frac{u}{N}) )$,
see \eqref{est:UNunif} and \eqref{eq:Gthetaunif}.
Since $\hat G_\theta(\cdot)$ is decreasing, we can bound the Riemann sum
by the integral and get
\begin{equation}
\label{eq:UNsumbd}
\begin{aligned}
\sigma_N^2\sum_{v=0}^N e^{-\lambda \frac{v}{N}} U_N(v) & \leq
\frac{c_1 c_2}{\log N} \Bigg(1  +
\frac{\log N}{N} \sum_{v=1}^N
e^{-\lambda \frac{v}{N}} \, \hat G_\theta\big(\tfrac{v}{N}\big)
 \Bigg) \\
& \le \frac{c_1 c_2}{\log N} \bigg( 1 + \log N \,
\int_0^1 e^{-\lambda v} \, \hat G_\theta(v) \, \dd v \bigg)
\le c_1 c_2 \bigg( \frac{1}{\log N} + C_\lambda \bigg) \,,
\end{aligned}
\end{equation}
where in the last inequality we have applied \eqref{eq:Clambda}.

The multiple sum over the $u_i$'s
in \eqref{eq:INmphibd11} is bounded
by the iterated integral in \eqref{eq:Jmbd}, by monotonicity
(note that if we replace $u_i$ by $N u_i$,
with $u_i \in \frac{1}{N}\Z \cap (0,1)$,
then we get the correct prefactor $1/N^{m}$,
thanks to the term $1/N^2$ in \eqref{eq:INmphibd11}). Then
\begin{equation} \label{eq:clearby}
\begin{split}
	J^{(N,m)}
	& \le e^\lambda \, c^{m} \,
	\big( \tfrac{1}{\log N} + C_\lambda \big)^m \,
	\int_0^1 \phi^{(m-2)}(u) \, \dd u \le e^\lambda \, (32 \, c)^{m} \,
	\big( \tfrac{1}{\log N} + C_\lambda \big)^m \, \,,
\end{split}
\end{equation}
because the integral is at most $32^m$, by \eqref{eq:estphiklast} (see
also \eqref{eq:Jmexpbd}).
Since $C_\lambda=\frac{\sfc_\theta}{2+\log \lambda}$,
see \eqref{eq:Clambda}, if we choose $\lambda$ and $N$ large enough,
then it is clear by \eqref{eq:clearby}
that $J^{(N,m)}$ decays faster than any exponential in $m$.
This proves \eqref{eq:INmbound}.

\medskip

We next prove \eqref{eq:INmconv},
for simplicity with $t=1$. This is easily guessed because
$I^{(N, m)}_{1}(\phi, \psi)$ (see \eqref{eq:prel2Q})
is close to a Riemann sum for $\pi^m\cI^{(m)}_{1}(\phi, \psi)$
(see \eqref{eq:ImQfree2}),
by the asymptotic relations
\begin{gather}
	\sigma_N^2 \sim \frac{\pi}{\log N} \,, \qquad
	q^N_{0,a}(\phi,x) \sim \Phi_{\frac{a}{N}}\big( \tfrac{x}{\sqrt{N}}\big) \,, \qquad
	q^N_{b, 1}(y, \psi) \sim \Psi_{1-\frac{b}{N}}\big( \tfrac{y}{\sqrt{N}}\big) \,,
	\label{eq:asqq} \\
	q_{b,a}(y,x) \sim \frac{1}{N} \, g_{\frac{a-b}{N}}\big(\tfrac{x-y}{\sqrt{N}}\big) \,,
	\qquad U_N(b-a,y-x) \sim \frac{\log N}{N^2} \, G_{\theta}\big(\tfrac{b-a}{N},
	\tfrac{y-x}{\sqrt{N}}\big) \,,
	\label{eq:asqU}
\end{gather}
see \eqref{eq:olap}, \eqref{eq:sigmaN}, \eqref{eq:convPhi}
and \eqref{eq:qas}, \eqref{eq:Unas}.\footnote{For
simplicity, in relations \eqref{eq:asqU}
we have omitted the ``periodicity correction''
$2 \, \ind_{\{(n,x) \in \Z^3_{\mathrm{even}}\}}$, see \eqref{eq:qas} and \eqref{eq:Unas},
because this disappears upon summation.}
We stress that plugging \eqref{eq:asqq}-\eqref{eq:asqU} into \eqref{eq:prel2Q}
we obtain the correct prefactor $1/N^{2m}$,
thanks to the extra term $1/N^3$ in \eqref{eq:prel2Q}.

To justify the replacements \eqref{eq:asqq}-\eqref{eq:asqU}, we proceed by approximations.
Henceforth $m\geq 2$ is fixed.
We define $\cI^{(m), (\eps)}_1(\phi, \psi)$ by restricting the integral
in \eqref{eq:ImQfree2} to the set
\begin{equation} \label{eq:restrii0}
	\big\{ a_{i}-b_{i-1} \ge \epsilon \ \ \forall 1 \le i \le m+1 \,, \quad \
	b_i-a_i\geq \epsilon \ \
	\forall 1\leq i\leq m \big\} \,,
\end{equation}
where $b_0 := 0$ and $a_{m+1} := 1$.
Note that $\cI^{(m)}_1(\phi, \psi)-\cI^{(m),(\eps)}_1(\phi, \psi)$
is small, if we choose $\epsilon > 0$ small,
simply because the integrated integral $\cI^{(m)}_1(\phi, \psi)$ is finite.

We similarly define $I^{(N,m),(\eps)}_N(\phi, \psi)$ by restricting the sum
in \eqref{eq:prel2Q} to the set
\begin{equation} \label{eq:restrii}
	\big\{ a_{i}-b_{i-1} \ge \epsilon N \ \ \forall 1 \le i \le m+1 \,, \quad \
	b_i-a_i\geq \epsilon N \ \
	\forall 1\leq i\leq m \big\} \,,
\end{equation}
where $b_0 := 0$ and $a_{m+1} := N$.
The difference $I^{(N,m)}_N(\phi, \psi)-I^{(N,m),(\eps)}_N(\phi, \psi)$
is bounded by the sum in \eqref{eq:INmphipsibd2}
restricted to the complementary set of \eqref{eq:restrii}.
By the uniform bound \eqref{est:UNunif},
this sum is bounded by the integral in
\eqref{eq:Imphipsibd2} restricted to the complementary set of \eqref{eq:restrii0}.
Then $I^{(N,m)}_N(\phi, \psi)-I^{(N,m),(\eps)}_N(\phi, \psi)$
is small, uniformly in large $N$, for $\eps>0$ small.

As a consequence, to prove \eqref{eq:INmconv}
it suffices to show that
$$
	\lim_{N\to\infty} I^{(N,m),(\eps)}_N(\phi, \psi)
	= \pi^m \, \cI^{(m),(\eps)}_1(\phi, \psi) \qquad \text{for each }\eps>0.
$$

\smallskip

We next make a second approximation.
For large $M > 0$, we define $\cI^{(m),(\eps, M)}_1(\phi, \psi)$
by further restricting the integral
in \eqref{eq:ImQfree2} to the bounded set
\begin{equation} \label{eq:xyrestrict}
\begin{split}
	\big\{ |x_1| \le M, \quad
	& |y_i-x_i| \le M \sqrt{b_i-a_i} \ \ \forall 1 \le i \le m \,, \\
	& |x_{i}-y_{i-1}| \le M\sqrt{a_{i}-b_{i-1}} \ \ \forall\, 2 \le i \le m
	\big\} \,.
\end{split}
\end{equation}
We similarly define $I^{(N,m),(\eps,M)}_N(\phi, \psi)$,
by further restricting the sum in \eqref{eq:prel2Q}
to the set
\begin{equation} \label{eq:xyrestrict2}
\begin{split}
	\big\{ |x_1| \le M \sqrt{N}, \quad
	& |y_i-x_i| \le M \sqrt{b_i-a_i} \ \ \forall 1 \le i \le m \,, \\
	& |x_{i}-y_{i-1}| \le M\sqrt{a_{i}-b_{i-1}} \ \ \forall\, 2 \le i \le m
	\big\} \,.
\end{split}
\end{equation}
Clearly,
$\lim_{M\to\infty} \cI^{(m),(\eps, M)}_1(\phi, \psi) = \cI^{(m),(\eps)}_1(\phi, \psi)$.
We claim that, analogously,
\begin{equation}\label{eq:ImMdiffbd}
	\lim_{M\to\infty} \limsup_{N\to\infty}
	\big| I^{(N,m),(\eps, M)}_N(\phi, \psi) - I^{(N,m),(\eps)}_N(\phi, \psi) \big| =0 \,.
\end{equation}
Then we can complete the proof
of Lemma~\ref{lem:INmbd}: the asymptotic relations \eqref{eq:asqq}
and \eqref{eq:asqU} hold uniformly on the restricted sets
\eqref{eq:restrii} and \eqref{eq:xyrestrict2}, so
by dominated convergence
$$
	\text{for every } \epsilon > 0 \, , \ M < \infty: \qquad
	\lim_{N\to\infty} I^{(N,m),(\eps, M)}_N(\phi, \psi) = \pi^m \,
	\cI^{(m),(\eps, M)}_1(\phi, \psi) \,.
$$

It remains to prove \eqref{eq:ImMdiffbd}.
We can upper bound the difference in \eqref{eq:ImMdiffbd}
as
in  \eqref{eq:INmphipsibd2}-\eqref{eq:INmphibd1}:
we sum out the spatial variables recursively, starting from $y_m$,
then $x_m$, then $y_{m-1}$, etc.
\begin{itemize}
\item When we sum out $y_m$, if  $|y_m-x_m|>M \sqrt{b_m-a_m}$, then by
\eqref{eq:summU} and \eqref{eq:Udiffusive} we pick up at most a
fraction $\delta(M) \le C / M^2$ of
the upper bound in \eqref{eq:INmphibd1}.
The same applies when we sum out $y_i$ for $2 \le i \le m-1$,
if $|y_i-x_i|>M\sqrt{b_i-a_i}$.
\item
When we sum out $x_m$, if $|x_m-y_{m-1}|>M\sqrt{a_m-b_{m-1}}$, then we
restrict the sum in \eqref{eq:qconvbd} accordingly,
and we pick up again at most a fraction
$\delta(M) \le 1 /M^2$ of the upper bound in \eqref{eq:INmphibd1},
simply because $\sum_{|x| > M \sqrt{n}} q_n(x) = \P(|S_n| > M \sqrt{n})
\le 1 / M^2$.
The same applies when we sum out $x_i$ for $3 \le i \le m-1$.
\item The same argument applies to the sums over $x_2$ and $y_1$,
see the lines following \eqref{eq:INmphibd1}.
\item For the last sum over $x_1$, if $|x_1| > M \sqrt{N}$,
by \eqref{eq:qNavg} and the fact that
$\phi$ has compact support,
we pick up at most a fraction $\delta(M) = O(1/M^2)$ of the sum \eqref{eq:qualaa}.
\end{itemize}
Since for fixed $m$, there are only finitely many cases that violate \eqref{eq:xyrestrict2},
while $\delta(M) \to 0$ as $M \to \infty$, then \eqref{eq:ImMdiffbd} follows readily.
\qed

\smallskip
\section{Further bounds without triple intersections}\label{sec:furtherbds}

We recall that the
centered third moment $\bbE \big[ \big( Z^{N, \beta_N}_{s,t}(\phi, \psi)
- \bbE[Z^{N, \beta_N}_{s,t}(\phi, \psi)] \big)^3 \big]$
of the partition function averaged over both endpoints
admits the expansion \eqref{eq:MNQ}.
We then denoted by $M^{N, \rm NT}_{s, t}(\phi, \psi)$ the contribution
to \eqref{eq:MNQ}
coming from \emph{no triple intersecitons}, see \eqref{eq:trinotri}.

\smallskip

We now consider the partition functions
$Z^{N, \beta_N}_{s, t}(w, \psi)$, $Z^{N, \beta_N}_{s, t}(\phi, z)$
averaged over \emph{one} endpoint,
see \eqref{eq:avgxpsi}, \eqref{eq:avgphiy},
and also the point-to-point
partition function $Z^{\beta_N}_{s, t}(w, z)$, see \eqref{eq:Qab}
(we sometimes write $Z^{N, \beta_N}_{s, t}(w, z)$, even though it
carries no explicit dependence on $N$).

The centered third moment $\bbE \big[ \big( Z^{N, \beta_N}_{s,t}(*, \dagger)
- \bbE[Z^{N, \beta_N}_{s,t}(*, \dagger)] \big)^3 \big]$
for $* \in \{\phi, w\}$, $\dagger \in \{\psi, z\}$
can be written as in \eqref{eq:MNQ},
starting from the polynomial chaos expansions \eqref{eq:polypointQ}-\eqref{avgmulti}.
In analogy with \eqref{eq:trinotri},
we decompose
\begin{equation} \label{eq:quantities}
	\bbE \big[ \big( Z^{N, \beta_N}_{s,t}(*, \dagger)
	- \bbE[Z^{N, \beta_N}_{s,t}(*, \dagger)] \big)^3 \big]
	= M^{N, \rm NT}_{s, t}(*, \dagger) + M^{N, \rm T}_{s, t}(*, \dagger) \,,
\end{equation}
where $M^{N, \rm T}_{s, t}(*, \dagger)$
and $M^{N, \rm NT}_{s, t}(*, \dagger)$ are the contributions
with and without triple intersections.
In this section we prove the following bounds,
which will be used to prove~Proposition \ref{prop:notripleQ}.

\begin{lemma}[Bounds without triple intersections]\label{lem:notriplebds}
Let $\phi\in C_c(\R^2)$, $\psi \in C_b(\R^2)$
and  $w,z \in \Z^2$. For any $\eps>0$, as $N\to\infty$, we have
\begin{align}
	M^{N, \rm NT}_{0, N}(w, \psi) & =O(N^\eps) \,, \label{eq:ntpb1}\\
	\rule{0pt}{1.2em}\sum_{1\leq a\leq N} \sum_{z\in \Z^2}
	M^{N, \rm NT}_{0, a}(w, z) & =O(1) \,,  \label{eq:ntpb3} \\
	\sum_{1\leq a\leq N} \sum_{z\in \Z^2}
	M^{N, \rm NT}_{0, a}(\phi, z) & = O(N^{\frac{5}{2}+\eps}) \,.
	\label{eq:ntpb2}
\end{align}
\end{lemma}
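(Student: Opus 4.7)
The argument proceeds in parallel with the proofs of Lemmas~\ref{lem:INmbd} and~\ref{lem:Imconv}. The starting point is the stretch-decomposed representation
\[
M^{N, \rm NT}_{s, t}(*, \dagger) \,=\, \sum_{m=2}^\infty 3 \cdot 2^{m-1} \, I^{(N, m)}_{s, t}(*, \dagger),
\]
analogous to \eqref{eq:prel2Q}, but with the initial and terminal factors reflecting the three boundary settings $(*, \dagger) \in \{(w,\psi),\,(w,z),\,(\phi,z)\}$. I would then integrate out the spatial variables in the order $y_m, x_m, y_{m-1}, x_{m-1}, \ldots, y_2, x_3$ exactly as in the proof of Lemma~\ref{lem:INmbd}, using $\sum_y U_N(v, y-x) = U_N(v)$ together with $\sum_x q(y, x) q(y', x) \le c/\sqrt{(a-b)(a-b')}$.

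The four boundary factors would then be treated case by case. For initial point $w$: $\sum_{x_1} q_{a_1}(x_1 - w)^2 = u_{a_1}^2 \le c/a_1$, combined with the convolution identity $\sum_{x_2} q_{a_2}(x_2-w)\,q_{a_2-b_1}(x_2-y_1) = q_{2a_2 - b_1}(y_1 - w)$ to estimate the first two space-sums. For initial $\phi$: the bound $\sum_{x_1} q_{0, a_1}^N(\phi, x_1)^2 \le c_\phi N$ from \eqref{eq:qualaa}. For terminal $\psi$: each terminal factor is bounded pointwise by $|\psi|_\infty$. For terminal point $z$ summed over $(a, z)$: I would first control $\sum_z q_{a-b_{m-1}}(z-y_{m-1})\,q_{a-b_m}(z-y_m)^2 \le c^2/(a-b_m)^2$, using $\sup_z q_{a-b_{m-1}} \le c/(a-b_{m-1})$ and $\sum_z q_{a-b_m}(z-y_m)^2 = u_{a-b_m}^2 \le c/(a-b_m)$; summation over $a \in (b_m, N]$ then yields the crucial $O(1)$ bound.

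After the spatial integrations, I would control the resulting series in $m$ by invoking the $\lambda$-weighting trick from the proof of Lemma~\ref{lem:INmbd}: insert $e^\lambda \prod_i e^{-\lambda v_i/N} \ge 1$ so that each factor $\sigma_N^2 \sum_v e^{-\lambda v/N} U_N(v)$ is of size $O(1/\log N + C_\lambda)$ by \eqref{eq:UNsumbd}--\eqref{eq:Clambda}, while the remaining combinatorial sum over the gaps $u_i$ is bounded via Lemma~\ref{lem:phikbd}. Taking $\lambda$ large produces geometric decay of $I^{(N, m)}$ in $m$, up to polylogarithmic-in-$N$ factors stemming from the boundaries. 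Specializing: \eqref{eq:ntpb1} carries only polylog contributions on both sides and hence gives $O(\text{polylog}\,N)\subset O(N^\eps)$; \eqref{eq:ntpb3} combines the $O(1)$ terminal summation with polylog initial factors that cancel against $\sigma_N^{2m}$ and the geometric $m$-decay to give a bounded total; \eqref{eq:ntpb2} inherits the $c_\phi N$ factor from \eqref{eq:qualaa} plus further $N$-factors from the $(a,z)$-summation, for a bound of $O(N^{5/2+\eps})$ (which is comfortably loose but sufficient).

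The main obstacle is ensuring summability in $m$: naive estimates give $|I^{(N,m)}| \le C^m (\log N)^m$, which diverges when summed. This is circumvented via the $\lambda$-weighted exponential trick of Lemma~\ref{lem:GthIntegral} and the combinatorial estimate of Lemma~\ref{lem:phikbd}, adapted to accommodate the different boundary factors. In particular, extracting the genuine $O(1)$ in \eqref{eq:ntpb3}---rather than merely $O(\text{polylog}\,N)$---requires careful tracking of how the boundary-induced polylog factors precisely balance against the $\sigma_N^{2m}$ prefactor and the geometric $m$-decay.
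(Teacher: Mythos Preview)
Your skeleton is exactly the paper's: stretch decomposition, recursive spatial summation, the $\lambda$-weighting via \eqref{eq:UNsumbd}--\eqref{eq:Clambda}, and the $\phi^{(k)}$ iteration. But two devices specific to this lemma are absent, and without them \eqref{eq:ntpb3} does not come out as $O(1)$.

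First, the paper never bounds the boundary $q^{2}$ factors separately. It uses the exact renewal identity
\[
\sigma_N^{2}\sum_{0<a_1\le b_1}\ \sum_{x_1\in\Z^2} q_{a_1}(x_1)^{2}\,U_N(b_1-a_1,\,y_1-x_1)\;=\;U_N(b_1,\,y_1)
\]
and its right-end analogue to \emph{absorb} each $q^{2}$ into the adjacent $U_N$, shedding one power of $\sigma_N^{2}$ in the process. Your route---summing $\sum_{x_1}q_{a_1}(x_1-w)^{2}=u_{a_1}^{2}$ first---decouples $x_1$ from $U_N(\,\cdot\,,y_1-x_1)$ and leaves an unpaired factor $\sum_{a_1}u_{a_1}^{2}\sim R_N\sim\log N$; the convolution identity for $x_2$ does not repair this imbalance between powers of $\sigma_N^{2}$ and of $U_N$. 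Second, once the right end is absorbed, a leftover kernel $q_{b_{m-1},a}(y_{m-1},z)\le c/\sqrt{b_m-b_{m-1}}$ injects a $1/\sqrt{u_m}$ into the gap integral. This forces a new iteration $\widehat\phi^{(k)}$ with seed $\widehat\phi^{(0)}(u)=1/\sqrt{u}$ (in place of $\phi^{(0)}\equiv 1$), a dedicated estimate $\widehat\phi^{(k)}(u)\le 32^{k}\sum_{i\le k}\frac{(\log(e^{2}Nu))^{i}}{2^{i}i!\sqrt{u}}$, and finally a split of the $u_1$-integral at $u_1=N^{-1/2}$, so that the prefactor $(\log N)^{2}/\sqrt{N}$ cancels against $\hat G_\theta$ and $\widehat\phi^{(m-1)}$ in each regime. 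Your last paragraph (``requires careful tracking'') is precisely where all of this lives; as written, your bounds yield at best $O(\mathrm{polylog}\,N)$ for \eqref{eq:ntpb3}. A smaller point: \eqref{eq:ntpb1} is not polylog either---the paper's bound is $(\log N)\,(e^{2}N)^{32c\,C_{\lambda,N}}$, a genuine small power of $N$ that arises from $\sum_{i\ge 0}\frac{1}{i!}\big(32c\,C_{\lambda,N}\log(e^{2}N)\big)^{i}$ after interchanging the sums over $m$ and $i$.
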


We prove relations \eqref{eq:ntpb1}--\eqref{eq:ntpb2} separately below.
For the quantity $M^{N, \rm NT}_{s, t}(*, \dagger)$, when both arguments
$*, \dagger$ are functions, we derived the representation \eqref{eq:prel2Q}.
Analogous representations hold when one of the arguments $*, \dagger$ is a point.
For instance, in the point-to-point case:
\begin{equation} \label{eq:prel3Q}
\begin{split}
	M^{N, \rm NT}_{s, t}(w, z) & = \sum_{m=2}^{\infty} 3 \cdot 2^{m-1} \,
	I^{(N, m)}_{s, t}(w, z), \qquad \text{where} \\
    I^{(N, m)}_{s, t}(w, z) & := \sigma_N^{2m}
	\sumtwo{s < a_1 \le b_1 < a_2 \le b_2 < \ldots < a_m \le b_m < t}
	{x_1, y_1, x_2, y_2, \ldots, x_m, y_m \in \Z^2} \!\!\!\!\!\!\!\!\!
	q_{s,a_1}(w,x_1)^2 \, q_{s,a_2}(w,x_2) \,
	\cdot\\
	& \qquad \cdot U_N(b_1-a_1, y_1 - x_1) \, q_{b_1, a_2}(y_1,x_2) \,	
	U_N(b_2-a_2, y_2 - x_2) \,\cdot
	\\
	& \qquad \cdot \prod_{i=3}^{m}
	\Big\{ q_{b_{i-2}, a_i}(y_{i-2},x_i) \, q_{b_{i-1}, a_i}(y_{i-1},x_i) \,
	U_N(b_i-a_i, y_i - x_i) \Big\} \,\cdot \\
	& \qquad
	\cdot q_{b_{m-1},t}(y_{m-1}, z) \,
	q_{b_{m},t}(y_{m}, z)^2 \,.
\end{split}
\end{equation}
Note that in contrast to \eqref{eq:prel2Q} there is no factor $N^{-3}$,
because the definition of $Z^{\beta_N}_{s,t}(w, z)$,
unlike $Z^{N,\beta_N}_{s,t}(\phi, \psi)$, contains no such factor,
cf.\ \eqref{eq:Qab} and \eqref{avgform22}.

The identity \eqref{eq:prel3Q} holds also for $M^{N, \rm NT}_{s, t}(\phi, z)$
(replace $q_{s,a_i}(w,x_i)$ by $q^N_{s,a_i}(\phi,x_i)$, $i=1,2$)
and for $M^{N, \rm NT}_{s, t}(w, \psi)$
(replace $q_{b_{i},t}(y_{i}, z)$ by $q^N_{b_{i},t}(y_{i}, \psi)$,
$i=m-1,m$).

\medskip
\noindent
\textbf{Proof of \eqref{eq:ntpb1}.}
To estimate $I^{(N, m)}_{0, N}(w, \psi)$,
we replace $\psi$ by the constant $|\psi|_\infty$,
and we take $|\psi|_\infty \le 1$. We then focus on
$I^{(N, m)}_{0, N}(w, 1)$, and we can set $w=0$,
by translation invariance.
By the analogue of \eqref{eq:prel3Q} (note that $q^N_{b_i,t}(y_i,\psi) \equiv 1$
for $\psi \equiv 1$), we get
\be\label{eq:ntpb4}
\begin{aligned}
	I^{(N, m)}_{0, N}(w, \psi) & \le \sigma_N^{2m}
	\sumtwo{0 < a_1 \le b_1 < a_2 \le b_2 < \ldots < a_m \le b_m < N}
	{x_1, y_1, x_2, y_2, \ldots, x_m, y_m \in \Z^2} \!\!\!\!\!\!\!
	q_{a_1}(x_1)^2 U_N(b_1-a_1, y_1 - x_1) \,\cdot\\
	& \qquad \cdot \prod_{i=2}^{m}
	\Big\{ q_{b_{i-2}, a_i}(y_{i-2},x_i) \, q_{b_{i-1}, a_i}(y_{i-1},x_i) \,
	U_N(b_i-a_i, y_i - x_i) \Big\},
\end{aligned}
\ee
where we stress that
the product starts from $i=2$ and
we set $b_0:=0$ and $y_0:= 0$.
By the definition of $U_N$ in \eqref{eq:UNVar}-\eqref{def:UN}, we have
the following identity, for fixed $b_1 \in \N$, $y_1 \in \Z^2$:
\begin{equation}\label{eq:renU}
	\sigma_N^2 \sum_{0<a_1\leq b_1, \ x_1\in \Z^2}
	q_{a_1}(x_1)^2 \,\,U_N(b_1-a_1,y_1-x_1) = U_N(b_1,y_1).
\end{equation}
Therefore we can rewrite \eqref{eq:ntpb4} as
\be\label{eq:ntpb44}
\begin{aligned}
	I^{(N, m)}_{0, N}(w, \psi) & \le \sigma_N^{2(m-1)}
	\sumtwo{0 < b_1 < a_2 \le b_2 < \ldots < a_m \le b_m < N}
	{y_1, x_2, y_2, \ldots, x_m, y_m \in \Z^2} \!\!\!\!\!\!\!
	U_N(b_1, y_1) \,\cdot\\
	& \qquad \cdot \prod_{i=2}^{m}
	\Big\{ q_{b_{i-2}, a_i}(y_{i-2},x_i) \, q_{b_{i-1}, a_i}(y_{i-1},x_i) \,
	U_N(b_i-a_i, y_i - x_i) \Big\}.
\end{aligned}
\ee

We now sum out the spatial variables $y_m$, $x_m$, \ldots,
$y_2$, $x_2$, $y_1$,
arguing as in \eqref{eq:summU}-\eqref{eq:qconvbd},
to get the following upper bound, analogous to \eqref{eq:INmphipsibd2},
for a suitable $c < \infty$:
\begin{equation} \label{eq:louba}
\begin{split}
	& I^{(N, m)}_{0, N}(w, \psi) \le c^{m} \,
	\sigma_N^{2(m-1)} \!\!\!
	\sum\limits_{\substack{0 < b_1 < a_2 \le b_2 < \ldots \\
	\ldots < a_m \le b_m < N}} \!\!\!\!
	U_N(b_1) \prod_{i=2}^{m}
	\frac{U_N(b_i - a_i)}{\sqrt{(a_i-b_{i-1})(a_i-b_{i-2})}} \,.
\end{split}
\end{equation}

Then
we set $u_i := a_i-b_{i-1}$, $v_i:=b_i-a_i$ for $2\leq i\leq m$,
and we rename $u_1 := b_1$.
This allows to bound $a_i - b_{i-2} \ge u_i + u_{i-1}$ for all $i \ge 2$
(including $i=2$, since $a_i - b_{i-2} = a_2 \ge u_2 + b_1$).
Then, for $\lambda > 0$,
we insert the factor $e^{\lambda} \prod_{i=2}^m e^{-\lambda (\frac{v_i}{N})}>1$
and we estimate, as in \eqref{eq:INmphibd11},
\begin{equation}\label{eq:louba3bis}
\begin{split}
	I^{(N, m)}_{0, N}(w, \psi) & \le
	e^\lambda \, c^m \,
	 \Bigg(\sigma_N^2\sum_{v=0}^N e^{-\lambda \frac{v}{N}} U_N(v)\Bigg)^{m-1} \, \cdot \\
	& \qquad\ \cdot \Bigg\{
	\sum_{u_1 = 1}^N U_N(u_1) \!\!\!\!\!
	\sum_{0 < u_2, \ldots, u_m < N} \,
	\prod_{i=2}^{m}
	\frac{1}{\sqrt{u_i(u_i + u_{i-1})}} \Bigg\} \,.
\end{split}
\end{equation}
The first parenthesis is
$\le c \big( \tfrac{1}{\log N} + C_\lambda \big)$, see \eqref{eq:UNsumbd}.
Then we replace $u_i$ by $N u_i$, with $u_i \in \frac{1}{N}\Z$,
and bound Riemann sums by integrals, by
monotonicity. This yields (for a possibly larger $c$)
\begin{equation}\label{eq:louba4bis}
\begin{split}
	I^{(N, m)}_{0, N}(w, \psi) & \le e^\lambda \, c^m \,
	\big( \tfrac{1}{\log N} + C_\lambda \big)^{m-1} \, \cdot\\
	& \qquad \cdot
	\Bigg\{ \sumtwo{u_1 \in \frac{1}{N}\Z}{\frac{1}{N} \le u_1 \le 1} U_N(N u_1) \!\!\!\!\!\!\!
	\int\limits_{0 < u_2, \ldots, u_m < 1} \,
	\prod_{i=2}^{m}
	\frac{1}{\sqrt{u_i(u_i + u_{i-1})}} \, \dd \vec{u}\Bigg\} \,.
\end{split}
\end{equation}
The integral equals $\phi^{(m-1)}(u_1)$, see \eqref{eq:phik}.
We bound $U_N(N u_1) \le c_2 \, \frac{\log N}{N}
\, \hat G_{\theta}(u_1) $
by \eqref{est:UNunif} and \eqref{eq:Gthetaunif}, since $u_1 > 0$.
Recalling that $\phi^{(m-1)}(\cdot)$ is decreasing, we get
\begin{equation}\label{eq:lastspl}
\begin{split}
	I^{(N, m)}_{0, N}(w, \psi) & \le e^\lambda \, c^m \,
	\big( \tfrac{1}{\log N} + C_\lambda \big)^{m-1} \,
	\Bigg\{ \frac{\log N}{N}
	\sum_{\substack{u_1 \in \frac{1}{N} \Z \\ 0 < u_1 < 1}} \ \hat G_{\theta}(u_1)
	\Bigg\} \, \phi^{(m-1)}(\tfrac{1}{N}) \\
	& \le e^\lambda \, c^m \,
	\big( \tfrac{1}{\log N} + C_\lambda \big)^{m-1} \,
	(\log N) \, C_\lambda \,
	\phi^{(m-1)}(\tfrac{1}{N}) \,,
\end{split}
\end{equation}
where for the last inequality, recalling that $\hat G_{\theta}(\cdot)$ is decreasing,
we bounded the Riemann sum in brackets by the integral
$\int_0^1 \hat G_{\theta}(u_1) \, \dd u_1 \le C_\lambda$,
see \eqref{eq:Clambda}.

Putting together \eqref{eq:prel3Q} and \eqref{eq:lastspl},
we can finally estimate
\begin{equation*}
\begin{split}
	M^{N, \rm NT}_{0, N}(w, \psi)
	& \le 3 \sum_{m\ge 2} 2^{m} \, I^{(N, m)}_{0, N}(w, \psi)
	\le 3 e^\lambda \, (\log N) \sum_{m\ge 2}
	\big[ 2c \, \big( \tfrac{1}{\log N} + C_\lambda \big)\big]^{m} \, \phi^{(m-1)}(\tfrac{1}{N}) \,,
\end{split}
\end{equation*}
and using the first inequality in \eqref{eq:estphiklast} we obtain
\begin{equation*}
\begin{split}
	M^{N, \rm NT}_{0, N}(w, \psi)
	& \le 3 e^\lambda \, (\log N) \sum_{m\ge 2}
	\big[ 64c \, \big( \tfrac{1}{\log N} + C_\lambda \big)\big]^{m}  \sum_{i=0}^{m} \frac{1}{i!}
	\big( \tfrac{1}{2} \log (e^2 N) \big)^{i} \\
	& \le 3 e^\lambda \, (\log N) \sum_{i \ge 0}
	\frac{1}{i!}
	\big( \tfrac{1}{2} \log (e^2 N) \big)^{i}
	\sum_{m\ge i} \big[ 64c \, \big( \tfrac{1}{\log N} + C_\lambda \big)\big]^{m}  \\
	& = \tfrac{3 e^\lambda}{1-[ 64c \, ( \frac{1}{\log N} + C_\lambda )]} \, (\log N) \,
	\sum_{i \ge 0}
	\frac{1}{i!} \Big(32 c \, \big( \tfrac{1}{\log N} + C_\lambda \big)
	\, \log (e^2 N) \Big)^{i} \\
	& = \tfrac{3 e^\lambda}{1-[ 64c \, ( \frac{1}{\log N} + C_\lambda )]} \, (\log N) \,
	\big( e^2 N \big)^{32 c \, ( \frac{1}{\log N} + C_\lambda )} \,.
\end{split}
\end{equation*}
Since $\lim_{\lambda \to \infty} C_\lambda = 0$,
see \eqref{eq:Clambda}, given $\epsilon > 0$ we can fix $\lambda$ large so that
$32 c \, C_\lambda  < \frac{\epsilon}{2}$. Then for large $N$ the exponent of $(e^2 N)$
in the last term is $< \epsilon$, which proves \eqref{eq:ntpb1}.\qed

\medskip
\noindent
\textbf{Proof of \eqref{eq:ntpb3}.}
From the first line of \eqref{eq:prel3Q} we can write
\begin{equation}\label{eq:ftfl}
	\sum_{1 \le a \le N} \sum_{z \in \Z^2} M^{N, \rm NT}_{0, a}(w, z)
	= \sum_{m\ge 2} 3 \cdot 2^{m-1} \,
	\sum_{1 \le a \le N} \sum_{z \in \Z^2} I^{(N, m)}_{0, a}(w, z)
\end{equation}

To estimate $\sum_{1 \le a \le N} \sum_{z \in \Z^2} I^{(N, m)}_{0, a}(w, z)$,
we use the representation \eqref{eq:prel3Q} with $s=0$ and $t=a$.
We may also set $w = 0$ (by translation invariance).
We first perform the sum over $a_1$ and $b_m$, using
\eqref{eq:renU} and the symmetric relation
\begin{align}
	\label{eq:symrel}
	\sigma_N^2 \sum_{a_m\leq b_m<a,\ y_m\in \Z^2}  U_N(b_m-a_m,y_m-x_m) \,
	q_{b_m,a}(y_m,z)^2 &= U_N(a-a_m,z-x_m) \,.
\end{align}
We then obtain
\begin{align}
	\sumtwo{1\leq a \leq N}{z\in\Z^2}
	I^{(N, m)}_{0, a}(w, z) =\ & \sigma_N^{2(m-2)} \!\!\!\!\!\!\!\!
	\sumtwo{0 < b_1 < a_2 \le b_2 < \ldots < a_m  < a\leq N}
	{y_1, x_2, \ldots, y_{m-1}, x_m, z \in \Z^2} \!\!\!\!\!\!\!\!\!
     U_N(b_1, y_1)  \, \cdot\nonumber \\
	& \cdot \prod_{i=2}^{m-1}
	\Big\{ q_{b_{i-2}, a_i}(y_{i-2},x_i) \, q_{b_{i-1}, a_i}(y_{i-1},x_i) \,
	U_N(b_i-a_i, y_i - x_i) \Big\}   \label{eq:prel4Qb}\\
	& \hspace{-70pt} \cdot \Big\{q_{b_{m-2}, a_m}(y_{m-2},x_m) \,
	q_{b_{m-1}, a_m}(y_{m-1},x_m) \,
	U_N(a-a_m, z - x_m) \Big\}  \, q_{b_{m-1},a}(y_{m-1},z). \nonumber
\end{align}

If we rename $y_m:=z$ and $b_m:=a$, then we see that \eqref{eq:prel4Qb}
differs from \eqref{eq:ntpb44} only for the factor $\sigma_N^{2(m-2)}$
(instead of $\sigma_N^{2(m-1)}$) and for the presence of the last kernel
$q_{b_{m-1},a}(y_{m-1},z)
= q_{b_{m-1},b_m}(y_{m-1},y_m)$. The latter can be estimated using \eqref{eq:qas}:
\begin{equation}
	\label{eq:lastkernel}
	q_{b_{m-1},a}(y_{m-1},z )\leq \frac{c}{b_m-b_{m-1}}
	\leq \frac{c}{\sqrt{b_m-b_{m-1}}}
\end{equation}
for some suitable constant $c$.
As in \eqref{eq:louba}, we
first sum out the spatial variables, getting
\begin{equation*}
\begin{split}
	& \sumtwo{1\leq a \leq N}{z\in\Z^2}
	I^{(N, m)}_{0, a}(w, z)  \le c^{m} \,
	\sigma_N^{2(m-2)} \!\!\!\!\!\!\!\!\!\!
	\sum\limits_{\substack{0 < b_1 < a_2 \le b_2 < \ldots \\
	\ldots < a_m \le b_m < N}} \!\!\!\!\!\!\!\!
	U_N(b_1) \prod_{i=2}^{m}
	\frac{U_N(b_i - a_i)}{\sqrt{(a_i-b_{i-1})(a_i-b_{i-2})}}
	\, \frac{1}{\sqrt{b_m-b_{m-1}}} \,.
\end{split}
\end{equation*}
Then we set
$u_1 := b_1$ and
$u_i := a_i-b_{i-1}$, $v_i:=b_i-a_i$ for $2\leq i\leq m$,
which allows to bound $a_i - b_{i-2} \ge u_i + u_{i-1}$ for $i \ge 2$,
as well as $b_m - b_{m-1} \ge u_m$. Then, for $\lambda > 0$,
we insert the factor $e^{\lambda} \prod_{i=2}^m e^{-\lambda (\frac{v_i}{N})}>1$
and, by \eqref{eq:UNsumbd},
we obtain the following analogue of \eqref{eq:louba3bis}:
\begin{equation}\label{eq:louba4bisnew}
\begin{split}
	\sumtwo{1\leq a \leq N}{z\in\Z^2}
	I^{(N, m)}_{0, a}(w, z) & \le
	 \, c^m \, e^\lambda \, (\log N)
	\big( \tfrac{1}{\log N} + C_\lambda \big)^{m-1} \, \cdot \\
	& \quad \cdot
	\Bigg\{
	\sum_{u_1 = 1}^N U_N(u_1) \!\!\!\!\!
	\sum_{0 < u_2, \ldots, u_m < N} \,
	\prod_{i=2}^{m}
	\frac{1}{\sqrt{u_i(u_i + u_{i-1})}} \, \frac{1}{\sqrt{u_m}} \Bigg\}  \,,
\end{split}
\end{equation}
where the extra $\log N$
comes from having $\sigma_N^{2(m-2)}$
instead of $\sigma_N^{2(m-1)}$ (by \eqref{eq:sigmaN} and \eqref{eq:olap}).

We now switch to macroscopic variables,
replacing $u_i$ by $N u_i$, with $u_i \in \frac{1}{N}\Z \cap (0,1)$,
and bound $U_N(N u_1) \le c_1 \, \frac{\log N}{N} \, \hat G_{\theta}(u_1)$
since $u_1 > 0$,
by \eqref{est:UNunif} and \eqref{eq:Gthetaunif}.
We then replace the Riemann sum in brackets by the corresponding integrals,
similar to \eqref{eq:louba4bis}, with an important difference
(for later purposes):
since $u_i \in \frac{1}{N} \Z$ and $u_i > 0$,
we can restrict the integration on $u_i \ge \frac{1}{N}$
(possibly enlarging the value of $c$). This leads to
\begin{equation}\label{eq:louba5bisnew}
\begin{split}
	\sumtwo{1\leq a \leq N}{z\in\Z^2}
	& I^{(N, m)}_{0, a}(w, z) \le
	(\log N) \, e^\lambda \, c^m \,
	\big( \tfrac{1}{\log N} + C_\lambda \big)^{m-1} \, \cdot \\
	& \quad \cdot \frac{\log N}{\sqrt{N}} \,
	\Bigg\{
	\int\limits_{\frac{1}{N} \le u_1, u_2, \ldots, u_m < 1} \!\!\!\!\!\!\!\!
	\hat G_{\theta}(u_1) \,
	\Bigg( \prod_{i=2}^{m}
	\frac{1}{\sqrt{u_i(u_i + u_{i-1})}} \Bigg) \,
	\frac{1}{\sqrt{u_m}} \, \dd \vec{u} \Bigg\}  \,,
\end{split}
\end{equation}
where the factor $\frac{\log N}{\sqrt{N}}$ comes from the estimate on $U_N(N u_1)$
and from the last kernel $1/\sqrt{u_m}$.

If we define $\widehat\phi^{(k)}(\cdot)$ as the following modification
of \eqref{eq:phik}:
\begin{equation} \label{eq:Fkhat}
\begin{aligned}
	\widehat \phi^{(0)}(u) & := \frac{1}{\sqrt u} \,, \qquad \text{and for $k \ge 1$:}\quad \
	\widehat\phi^{(k)}(u) := \int_{\frac{1}{N}}^1
	\frac{1}{\sqrt{s(s + u)}} \, \widehat \phi^{(k-1)}(s) \,  \dd s \,,
\end{aligned}
\end{equation}
then, recalling \eqref{eq:Clambda},
we can rewrite \eqref{eq:louba5bisnew} as follows:
\begin{equation} \label{eq:dein}
\begin{split}
	\sumtwo{1\leq a \leq N}{z\in\Z^2}
	I^{(N, m)}_{0, a}(w, z) & \le e^\lambda \, c^m \, \frac{(\log N)^2}{\sqrt{N}}  \,
	(C_{\lambda,N})^{m-1} \, \int_{\frac{1}{N}}^1 \hat G_{\theta}(u) \,
	\widehat\phi^{(m-1)}(u) \, \dd u \, , \\
	&  \quad \text{where we set } \
	C_{\lambda,N} := \tfrac{1}{\log N} + C_\lambda
	= \tfrac{1}{\log N} + \tfrac{\sfc_\theta}{2 + \log \lambda} \,.
\end{split}
\end{equation}

Similar to Lemma \ref{lem:phikbd}, we have the
following bound on $\widehat \phi^{(k)}$, that we prove later.

\begin{lemma}\label{lem:phikbdhat}
For all $k\in\N$,
the function $\widehat\phi^{(k)}(v)$ is decreasing on $(0,1)$, and satisfies
\begin{equation} \label{eq:estphiklasthat}
	\widehat \phi^{(k)}(u) \le 32^k  \sum_{i=0}^k \frac{1}{2^i\, i!}
	\frac{\big(\log (e^2Nu)\big)^{i}}{\sqrt u}
	\le 32^k \, e \, \sqrt{N} \,,
	\qquad \forall u \in \big(\tfrac{1}{N}, 1\big).
\end{equation}
\end{lemma}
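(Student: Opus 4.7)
The proof mirrors that of Lemma \ref{lem:phikbd}, with the integration lower bound $1/N$ and the extra factor $1/\sqrt u$ (from the new base case $\widehat\phi^{(0)}(u) = 1/\sqrt u$) accounted for throughout.

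The first step is the analogue of Lemma \ref{th:intlog}: for every $u \in (1/N, 1)$ and $i \ge 0$,
\[
\int_{1/N}^{1} \frac{(\log(e^2 Ns))^{i}}{s\sqrt{s+u}}\,\dd s \,\le\, \frac{2^{i+1}\,i!}{\sqrt u} \sum_{j=0}^{i+1} \frac{(\log(e^2 Nu))^{j}}{2^j \, j!}.
\]
I would prove this by splitting the integral at $s = u$ and writing $L := \log(e^2 Nu)$. On $[1/N, u]$, the bound $\sqrt{s+u} \ge \sqrt u$ together with the substitution $x = \log(e^2 Ns)$ (so $\dd s / s = \dd x$) gives $\frac{1}{\sqrt u}\int_{2}^{L} x^i \, \dd x \le L^{i+1}/((i+1)\sqrt u)$, which contributes the $j = i+1$ term. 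On $[u, 1]$, the bound $\sqrt{s+u} \ge \sqrt s$ followed by the substitutions $s = e^y$, $w = 2 + \log N + y$, and $w = 2r$ reduces the integral to $2^{i+1}\,e\sqrt N \int_{L/2}^{\ell/2} r^i e^{-r}\,\dd r$, with $\ell := 2+\log N$. The Poisson-process identity \eqref{eq:gammaint} evaluates this as $2^{i+1}\,e\sqrt N \cdot e^{-L/2}\sum_{j=0}^{i} i!\,(L/2)^j/j!$, and since $e\sqrt N \cdot e^{-L/2} = 1/\sqrt u$, combining the two pieces yields the claimed estimate.

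Next, I would establish by induction on $k \ge 1$ the refined bound
\[
\widehat\phi^{(k)}(u) \,\le\, \frac{1}{\sqrt u} \sum_{i=0}^{k} \frac{c_{k, i}}{2^i\,i!}\, L^i,
\]
where the coefficients $c_{k, i}$ satisfy the recursion \eqref{eq:rec} with initial data $c_{1, 0} = 0$ and $c_{1, 1} = 2$. The base case $k = 1$ follows from the same splitting argument applied directly to $\widehat\phi^{(1)}(u) = \int_{1/N}^1 \dd s / (s\sqrt{s+u})$ (without any $(\log(e^2 Ns))^i$ factor), which produces $\widehat\phi^{(1)}(u) \le L/\sqrt u$. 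The inductive step substitutes the bound for $\widehat\phi^{(k-1)}$ into \eqref{eq:Fkhat}, applies the key integral estimate term by term, and reorganizes by powers of $L$; this reproduces the recursion \eqref{eq:rec} verbatim. Since both the recursion and the initial data match those arising in the proof of Lemma \ref{lem:phikbd}, the coefficients $c_{k,i}$ are identical, and Lemma \ref{lem:Sk} delivers $c_{k,i} \le 32^k$, giving the first inequality in \eqref{eq:estphiklasthat}.

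The second inequality is then immediate from $\sum_{i=0}^{k} L^i/(2^i\,i!) \le \exp(L/2) = e\sqrt{Nu} \le e\sqrt N$, using $u \le 1$. Monotonicity of $\widehat\phi^{(k)}$ on $(0,1)$ follows by induction from \eqref{eq:Fkhat}, since $u \mapsto 1/\sqrt{s+u}$ is decreasing. The main technical obstacle is the integral estimate in the first step; once it is in place, the combinatorial machinery of Lemmas \ref{th:intlog} and \ref{lem:Sk} transfers mechanically to the $1/\sqrt u$-weighted setting.
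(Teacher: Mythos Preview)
Your proof is correct and follows essentially the same approach as the paper's. The only cosmetic difference is bookkeeping: the paper extracts an explicit $2^k$ factor and works with rescaled coefficients $\hat c_{k,i} = 2^{-k} c_{k,i}$ satisfying the recursion \eqref{eq:rec} without the factor of $2$ (hence $\hat c_{k,i}\le 16^k$), whereas you keep the original $c_{k,i}$ from \eqref{eq:c1}--\eqref{eq:rec} throughout; both routes land on the same $32^k$ bound.
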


We need to estimate the integral in \eqref{eq:dein}, when we plug
in the bound \eqref{eq:estphiklasthat}.
We first consider the contribution from $u < \frac{1}{\sqrt{N}}$.
In this case $\hat G_{\theta}(u) \le \frac{4c_{\theta}}{(\log N)^2} \frac{1}{u}$,
see \eqref{eq:Gthetaunif}, hence
\begin{equation}
\begin{aligned}
	& \int_{\frac{1}{N}}^{\frac{1}{\sqrt N}}
	\hat G_\theta(u) \frac{\big(\log (e^2Nu)\big)^{i}}{\sqrt u} \dd u
	\leq \frac{4c_{\theta}}{(\log N)^2}
	\int_{\frac{1}{N}}^{\frac{1}{\sqrt N}}
	\frac{\big(\log (e^2Nu)\big)^{i}}{u^{\frac{3}{2}}} \dd u \\
	& \qquad \leq \frac{4e c_{\theta} \, \sqrt{N}}{(\log N)^2}
	\int_1^\infty
	\frac{\big(\log w\big)^{i}}{w^{\frac{3}{2}}} \dd w
	= \frac{8e c_{\theta} \, \sqrt{N}}{(\log N)^2} \int_0^\infty
	2^i  s^i e^{-s} \dd s
	= C \, \frac{\sqrt{N}}{(\log N)^2}
	\, 2^i \, i! \,,
\end{aligned}
\end{equation}
where we first made the change of variables $e^2Nu=w$, and then $w=e^{2s}$,
and denote $C = 8e c_{\theta}$ for short.
Then it follows by \eqref{eq:estphiklasthat} that
\begin{equation*}
	\int_{\frac{1}{N}}^{\frac{1}{\sqrt N}}
	\hat G_\theta(u) \, \widehat\phi^{(m-1)}(u) \, \dd u
	\le C \, 32^{m} \, \underset{\mathsf{A}_{m,N}}{\underbrace{ m \, \frac{\sqrt{N}}{(\log N)^2} }}
	\,.
\end{equation*}
We then consider the contribution from $u\geq \frac{1}{\sqrt N}$.
Since $\hat G_{\theta}(u) \le \frac{c_{\theta}}{u}$, we have
\begin{equation*}
	\int_{\frac{1}{\sqrt N}}^{1}
	\hat G_\theta(u) \frac{\big(\log (e^2Nu)\big)^{i}}{\sqrt u} \dd u
	\le c_{\theta} \, \big( \log (e^2 N) \big)^i \int_{\frac{1}{\sqrt N}}^{1}
	\frac{1}{u^{3/2}} \, \dd u =
	2 c_{\theta} \, N^{\frac{1}{4}} \, \big( \log (e^2 N) \big)^i \,,
\end{equation*}
hence by \eqref{eq:estphiklasthat}
\begin{equation*}
	\int_{\frac{1}{\sqrt N}}^1
	\hat G_\theta(u) \, \widehat\phi^{(m-1)}(u) \, \dd u
	\le C \, 32^m \, \underset{\mathsf{B}_{m,N}}{\underbrace{ N^{\frac{1}{4}} \, \sum_{i=0}^{m-2}
	\frac{1}{2^i \, i!} \, \big( \log (e^2 N) \big)^i }} \,.
\end{equation*}

By \eqref{eq:ftfl} and \eqref{eq:dein},
we finally see that
\begin{equation*}
\begin{split}
	\sum_{1 \le a \le N} \sum_{z \in \Z^2}
	& M^{N, \rm NT}_{0, a}(w, z)
	\le 3 \sum_{m\ge 2} 2^{m-1}
	\, e^\lambda \, c^m  \, \frac{(\log N)^2}{\sqrt{N}}  \,
	(C_{\lambda,N})^{m-1}  \, C \, 32^m
	\big\{  \mathsf{A}_{m,N} + \mathsf{B}_{m,N} \big\} \\
	& \qquad \le C' \, e^\lambda
	\Bigg\{ \sum_{m\ge 2} (64 \, c \, C_{\lambda,N})^{m-1} \, m \\
	& \qquad\qquad\qquad\qquad + \sum_{m\ge 2} (64 \, c \, C_{\lambda,N})^{m-1}
	\bigg( \frac{(\log N)^2}{N^{\frac{1}{4}}} \sum_{i=0}^{m-2}
	\frac{1}{2^i \, i!} \, \big( \log (e^2 N) \big)^i \bigg) \Bigg\} \,,
\end{split}
\end{equation*}
with $C' := 3 \cdot 32 c $. If we fix $\lambda$ large enough, then for large $N$ we have
$64 \, c \, C_{N,\lambda} < 1$ (recall \eqref{eq:dein}),
then the first sum in the RHS is finite, in agreement with our goal \eqref{eq:ntpb3}.
Concerning the second sum, we can estimate it by
\begin{equation*}
\begin{split}
	& \le \frac{(\log N)^2}{N^{\frac{1}{4}}}  \,
	\sum_{i\ge 0} \frac{1}{2^i \, i!} \, \big( \log (e^2 N) \big)^i
	\sum_{m \ge i+2} (64 \, c \, C_{\lambda,N})^{m-1} \\
	& \le \frac{(\log N)^2}{N^{\frac{1}{4}}}  \,
	\sum_{i\ge 0} \frac{1}{2^i \, i!} \, \big(
	\log (e^2 N) \big)^i \, \frac{(64 \, c \, C_{\lambda,N})^i}{1-64 \, c \, C_{\lambda,N}}
	= \frac{(\log N)^2}{N^{\frac{1}{4}}}  \,
	\frac{(e^2 N)^{32 \, c \, C_{\lambda,N}}}{1-64 \, c \, C_{\lambda,N}} \,.
\end{split}
\end{equation*}
If we fix $\lambda$ large enough, then for large $N$ we have
that the exponent is $32 \, c \, C_{\lambda,N} < \frac{1}{4}$, hence
the last term is $o(1)$ as $N\to\infty$.
This completes the proof of \eqref{eq:ntpb3}.\qed

\smallskip

In order to prove Lemma~\ref{lem:phikbdhat}, we need the
following analogue of Lemma~\ref{th:intlog}.

\begin{lemma}\label{lem:intloghat}
For all $i\in\N_0$ and $v\in \big(\frac{1}{N}, 1\big)$,
\begin{equation}\label{eq:logitbd}
\int_{\tfrac{1}{N}}^1 \frac{\big(\log(e^2Ns))\big)^i}{s\sqrt{s+v}} \,\dd s \leq \frac{2^{i+1}}{\sqrt{v}}
i! \sum_{j=0}^{i+1} \frac{\big(\log(e^2Nv)\big)^j}{2^j j!}.
\end{equation}
\end{lemma}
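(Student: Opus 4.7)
My plan is to follow the same strategy as in the proof of Lemma~\ref{th:intlog}: substitute to reduce to a dimensionless integral, split the range at $z=1$ using elementary bounds on $\sqrt{z+1}$, compute the ``small $z$'' part exactly, and bound the ``large $z$'' part by extending to infinity and using $\int_0^\infty y^k e^{-y}\,\dd y = k!$.

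\textbf{First step.} I would substitute $s = vz$, so $\dd s/s = \dd z/z$ and $\sqrt{s+v} = \sqrt{v}\sqrt{z+1}$. Since $v \in (\tfrac{1}{N},1)$, the bounds become $z \in (\tfrac{1}{Nv}, \tfrac{1}{v})$ with $\tfrac{1}{Nv} < 1 < \tfrac{1}{v}$, giving
\begin{equation*}
\int_{\tfrac{1}{N}}^1 \frac{(\log(e^2Ns))^i}{s\sqrt{s+v}}\,\dd s \;=\; \frac{1}{\sqrt v}\int_{\frac{1}{Nv}}^{\frac{1}{v}} \frac{(\log(e^2Nvz))^i}{z\sqrt{z+1}}\,\dd z.
\end{equation*}
I then split the $z$-integral at $1$ and use $\sqrt{z+1}\geq 1$ for $z\in[\tfrac{1}{Nv},1]$ and $\sqrt{z+1}\geq \sqrt{z}$ for $z\in[1,\tfrac{1}{v}]$, yielding an upper bound $\tfrac{1}{\sqrt v}(A+B)$ with
\begin{equation*}
A \,:=\, \int_{\frac{1}{Nv}}^{1} \frac{(\log(e^2Nvz))^i}{z}\,\dd z, \qquad B \,:=\, \int_{1}^{\frac{1}{v}} \frac{(\log(e^2Nvz))^i}{z^{3/2}}\,\dd z.
\end{equation*}

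\textbf{Second step.} For $A$, I change variable $u=\log(e^2Nvz)$, so $\dd u = \dd z/z$ and $u$ ranges over $(2, \log(e^2Nv))$, giving $A = \tfrac{1}{i+1}\bigl[(\log(e^2Nv))^{i+1} - 2^{i+1}\bigr] \leq \tfrac{1}{i+1}(\log(e^2Nv))^{i+1}$. This contribution corresponds precisely to the $j=i+1$ term on the right-hand side of \eqref{eq:logitbd}, since $\tfrac{2^{i+1-(i+1)}\,i!}{(i+1)!} = \tfrac{1}{i+1}$.

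\textbf{Third step.} For $B$, I substitute $z=e^{2y}$, so $\dd z/z^{3/2} = 2e^{-y}\,\dd y$ and $\log(e^2Nvz) = \log(e^2Nv) + 2y$. Writing $t := \log(e^2Nv)$ and enlarging the range of integration to $(0,\infty)$:
\begin{equation*}
B \,\leq\, 2\int_0^\infty (t+2y)^i \,e^{-y}\,\dd y \,=\, 2\sum_{k=0}^{i} \binom{i}{k} t^{i-k}\,2^k\, k! \,=\, \sum_{j=0}^{i} \frac{2^{i+1-j}\,i!}{j!}\,t^{j},
\end{equation*}
by the binomial theorem and $\int_0^\infty y^k e^{-y}\dd y = k!$, with $j=i-k$. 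These are exactly the terms $j=0,\ldots,i$ in the target sum. Adding the contributions from $A$ and $B$ and dividing by $\sqrt{v}$ yields
\begin{equation*}
\int_{\tfrac{1}{N}}^1 \frac{(\log(e^2Ns))^i}{s\sqrt{s+v}}\,\dd s \,\leq\, \frac{1}{\sqrt v}\sum_{j=0}^{i+1} \frac{2^{i+1-j}\,i!}{j!}(\log(e^2Nv))^j \,=\, \frac{2^{i+1}\,i!}{\sqrt v}\sum_{j=0}^{i+1} \frac{(\log(e^2Nv))^j}{2^j\,j!},
\end{equation*}
which is the claimed bound. There is no real obstacle here; the only subtlety is matching the combinatorial prefactors from $A$ and $B$ with the prescribed form of the right-hand side, which the substitution $j=i-k$ takes care of automatically.
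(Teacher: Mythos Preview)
Your proof is correct and follows essentially the same approach as the paper: split the integral at $s=v$ (equivalently $z=1$ after your substitution $s=vz$), bound the small-$s$ part exactly to get the $j=i+1$ term, and handle the large-$s$ part via an exponential substitution. The only cosmetic differences are that the paper splits in the original variable $s$ before substituting, and evaluates the large-$s$ piece using the incomplete gamma identity \eqref{eq:gammaint} rather than the binomial expansion; both routes yield the same coefficients.
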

\begin{proof} We can bound
$$
\int_{\tfrac{1}{N}}^1 \frac{\big(\log(e^2Ns))\big)^i}{s\sqrt{s+v}} \,\dd s \leq
\underbrace{\frac{1}{\sqrt v}\int_{\tfrac{1}{N}}^v \frac{\big(\log(e^2Ns))\big)^i}{s} \,\dd s}_{B}
+ \underbrace{\int_v^1 \frac{\big(\log(e^2Ns))\big)^i}{s\sqrt{s}} \,\dd s}_{A}.
$$
For $B$, we make the change of variable $u= \log (e^2Ns)$ to obtain
\begin{equation}
B= \frac{1}{\sqrt v} \int_2^{\log e^2Nv} u^i \dd u \leq \frac{(\log e^2Nv)^{i+1}}{\sqrt{v}(i+1)}.
\end{equation}
For $A$, we make the change of variable $y=\frac{1}{2}\log e^2Ns$ and apply \eqref{eq:gammaint} to obtain
\begin{equation}
A \leq  2^{i+1}e\sqrt{N} \int_{\frac{1}{2}\log e^2Nv}^\infty y^i e^{-y} \dd y \leq 2^{i+1}e\sqrt{N} e^{-\frac{1}{2}\log e^2Nv} \sum_{j=0}^i \frac{i!}{j!}\Big(\frac{1}{2}\log e^2Nv\Big)^j.
\end{equation}
Combined with the bound for $B$, this gives precisely \eqref{eq:logitbd}.
\end{proof}

\begin{proof}[Proof of Lemma~\ref{lem:phikbdhat}]
We follow the proof of Lemma \ref{lem:phikbd}.
We first show that for all $k\in\N$
\begin{equation} \label{eq:estphikhat}
	\widehat\phi^{(k)}(v) \le \frac{2^k}{\sqrt v} \, \sum_{i=0}^k \frac{\hat c_{k,i}}{2^i \, i!} \,
	\big(\log (e^2Nv)\big)^{i}  \qquad
	\forall v \in \Big(\frac{1}{N}, 1\Big) \,,
\end{equation}
for suitable coefficients $\hat c_{k,i}$.
For $k=1$, note that by \eqref{eq:Fkhat}
$$
	\widehat \phi^{(1)}(v)
	= \int_{\frac{1}{N}}^1 \frac{1}{\sqrt{s(s + v)}} \frac{1}{\sqrt s}  \dd s
	\le \int_{\frac{1}{N}}^v \frac{1}{\sqrt v}\frac{1}{s} \, \dd s
	+ \int_v^1 \frac{1}{s\sqrt{s}} \, \dd s
	\leq \frac{\log (e^2Nv)}{\sqrt v}.
$$
Therefore \eqref{eq:estphikhat} holds for $k=1$ with $\hat c_{1, 0}=0$ and $\hat c_{1, 1}=1$.

Assume that we have established \eqref{eq:estphikhat} up to $k-1$, then
\begin{equation}\label{eq:phihatrec}
	\widehat\phi^{(k)}(v)
	= \int_{\frac{1}{N}}^1 \frac{1}{\sqrt{s(s + v)}} \, \widehat \phi^{(k-1)}(s) \,  \dd s
	\leq 2^{k-1} \sum_{i=0}^{k-1} \frac{\hat c_{k-1,i}}{2^i \, i!} \,
	\int_{\frac{1}{N}}^1 \frac{\big(\log (e^2Ns)\big)^{i}}{s\sqrt{s + v}} \,  \dd s.
\end{equation}
Applying Lemma~\ref{lem:intloghat}, we obtain
$$
	\widehat\phi^{(k)}(v)
	\leq 2^{k-1} \sum_{i=0}^{k-1} \frac{\hat c_{k-1,i}}{2^i \, i!} \,
	\frac{2^{i+1}}{\sqrt{v}} i! \sum_{j=0}^{i+1} \frac{\big(\log(e^2Nv)\big)^j}{2^j j!}
	= \frac{2^k}{\sqrt v} \sum_{j=0}^k \Big(\sum_{i=(j-1)^+}^{k-1}\hat c_{k-1, i}\Big)
	\frac{(\log e^2Nv)^j}{2^j j!} \,.
$$
This shows that \eqref{eq:estphikhat} holds, provided the coefficients $\hat c_{k,i}$ satisfy
the recursion
\begin{equation}
	\hat c_{k, j} = \sum_{i=(j-1)^+}^{k-1} \hat c_{k-1, i},
\end{equation}
which differs from the recursion \eqref{eq:rec} for $c_{k,i}$
by a missing factor of 2. Note that $\hat c_{1,1}$ here is also only half of
$c_{1,1}$ in \eqref{eq:c1}.
Therefore we have the identity $\hat c_{k,i} = 2^{-k} c_{k,i}$, and Lemma~\ref{lem:Sk}
gives the bound $\hat c_{k, i}\leq 16^k$.
Substituting this bound into \eqref{eq:estphikhat} then proves Lemma~\ref{lem:phikbdhat}.
\end{proof}

\medskip
\noindent
\textbf{Proof of \eqref{eq:ntpb2}.}
We start from the analogue of \eqref{eq:prel3Q},
with $q_{s,a_i}(w,x_1), q_{s,a_i}(w,x_1)$ replaced by $q^N_{s,a_i}(\phi,x_1),
q^N_{s,a_i}(\phi,x_2)$. Applying relation \eqref{eq:symrel}, we can write
\begin{align}
	\sumtwo{1\leq a \leq N}{z\in\Z^2}I^{(N, m)}_{0, a}(\phi, z) &
	=\ \sigma_N^{2(m-1)} \!\!\!\!\!\!\!\!
	\sumtwo{0 < a_1\leq b_1 < a_2 <\ldots < a_m  < a\leq N}
	{x_1, y_1, x_2, y_2, \ldots, x_m, z \in \Z^2} \!\!\!\!\!\!\!\!\!
     q^N_{0, a_1}(\phi, x_1)^2q^N_{0, a_2}(\phi, x_2) \, \cdot \nonumber \\
    & \ \cdot U_N(b_1-a_1, y_1-x_1) q_{b_1, a_2}(y_1, x_2) U_N(b_2-a_2, y_2-x_2) \, \cdot \label{eq:prel5Q}\\
	 \cdot \prod_{i=3}^{m} &
	\Big\{ q_{b_{i-2}, a_i}(y_{i-2},x_i) \, q_{b_{i-1}, a_i}(y_{i-1},x_i) \,
	U_N(b_i-a_i, y_i - x_i) \Big\}  q_{b_{m-1},a}(y_{m-1},z) . \nonumber
\end{align}

We rename $y_m:=z$, $b_m:=a$ and bound
$q_{b_{m-1},a}(y_{m-1},z) \le (c\sqrt{b_m - b_{m-1}})^{-1}$, as in
\eqref{eq:lastkernel}.
Next we sum over the space variables
$y_m, x_m, \ldots$ until $y_3, x_3, y_2$, as in \eqref{eq:summU}-\eqref{eq:qconvbd},
which has the effect of replacing $U_N(b_i-a_i, y_i - x_i)$ by $U_N(b_i-a_i)$ and
$q_{b_{i-2}, a_i}(y_{i-2},x_i) \, q_{b_{i-1}, a_i}(y_{i-1},x_i)$ by
$c \, (\sqrt{(a_i-b_{i-1})(a_i - b_{i-2})})^{-1}$. Then
we bound $q^N_{0,a_2}(\phi, x_2) \le |\phi|_\infty$,
see \eqref{eq:qNavg},
after which the sum over $x_2$ gives $1$,
the sum over $y_1$ gives $U_N(b_1- a_1)$,
and the sum over $x_1$ is bounded by $c \, N$,
as in \eqref{eq:qualaa}. This leads to estimate the RHS of \eqref{eq:prel5Q} by
\begin{equation*}
\begin{split}
	& c^{m} \, N \,
	\sigma_N^{2(m-1)} \!\!\!\!\!\!\!\!\!\!
	\sum\limits_{\substack{0< a_1 \le b_1 < \ldots \\
	\ldots < a_m \le b_m < N}} \!\!\!\!\!\!\!\!
	U_N(b_1-a_1) \, U_N(b_2-a_2) \prod_{i=3}^{m}
	\frac{U_N(b_i - a_i)}{\sqrt{(a_i-b_{i-1})(a_i-b_{i-2})}}
	\, \frac{1}{\sqrt{b_m-b_{m-1}}} \,.
\end{split}
\end{equation*}

We now set
$u_i := a_i-b_{i-1}$ and $v_i:=b_i-a_i$ for $1\leq i\leq m$,
with $b_0 := 0$,
and bound $a_i - b_{i-2} \ge u_i + u_{i-1}$,
while $b_m - b_{m-1} \ge u_m$. Then
we insert the factor $e^{\lambda} \prod_{i=1}^m e^{-\lambda (\frac{v_i}{N})}>1$,
for $\lambda > 0$, and by \eqref{eq:UNsumbd} we bound
the last display by
\begin{equation}\label{eq:louba5bisnew2}
\begin{split}
	c^m \, e^\lambda \, N \, (\log N) \,
	\big( \tfrac{1}{\log N} + C_\lambda \big)^{m} \,
	\Bigg\{
	\sum_{0 < u_1, \ldots, u_m < N} \,
	\prod_{i=3}^{m}
	\frac{1}{\sqrt{u_i(u_i + u_{i-1})}} \, \frac{1}{\sqrt{u_m}} \Bigg\}  \,,
\end{split}
\end{equation}
which is an analogue of \eqref{eq:louba4bisnew}. The exponent
of $(\tfrac{1}{\log N} + C_\lambda)$ equals $m$,
because we have $m$ factors $U_N(b_i - a_i)$,
and the extra $\log N$ comes from having $m-1$ powers
of $\sigma_N^2$.

We now switch to macroscopic variables,
replacing $u_i$ by $N u_i$, with $u_i \in \frac{1}{N}\Z \cap (0,1)$,
and replace the Riemann sum in brackets by the corresponding integrals,
where as in \eqref{eq:louba5bisnew}
we restrict the integration on $u_i \ge \frac{1}{N}$
(possibly enlarging the value of $c$). This leads to
\begin{equation}\label{eq:louba5bisnew3}
\begin{split}
	\sumtwo{1\leq a \leq N}{z\in\Z^2}
	& I^{(N, m)}_{0, a}(\phi, z) \le
	c^m \, e^\lambda \, N \, (\log N) \,
	\big( \tfrac{1}{\log N} + C_\lambda \big)^{m} \, \cdot \\
	& \quad \cdot N^{\frac{3}{2}} \,
	\Bigg\{
	\int\limits_{\frac{1}{N} \le u_1, u_2, \ldots, u_m < 1}
	\Bigg( \prod_{i=3}^{m}
	\frac{1}{\sqrt{u_i(u_i + u_{i-1})}} \Bigg) \,
	\frac{1}{\sqrt{u_m}} \, \dd \vec{u} \Bigg\}  \,,
\end{split}
\end{equation}
where the factor $N^{\frac{3}{2}}$ arises by matching
the normalization factor $N^{-m}$ of the Riemann sum
and the term $N^{-(m-2)-\frac{1}{2}}$ generated by the square roots,
when we set $u_i \rightsquigarrow N u_i$.

Note that the variable $u_1$ does not
appear in the function to be integrated in \eqref{eq:louba5bisnew3},
so the integral over $u_1$ is at most $1$.
Recalling the definition \eqref{eq:Fkhat} of $\widehat \phi^{(k)}$, we have
\begin{equation*}
	\sumtwo{1\leq a \leq N}{z\in\Z^2}
	I^{(N, m)}_{0, a}(\phi, z) \le
	c^m \, e^\lambda \, N^{\frac{5}{2}} \, (\log N) \,
	\big( \tfrac{1}{\log N} + C_\lambda \big)^{m}
	\, \int_{\frac{1}{N}}^1 \phi^{(m-2)}(u_2) \, \dd u_2 \,.
\end{equation*}
By Lemma \ref{lem:phikbdhat}, we have
$$
	\int_{\frac{1}{N}}^{1} \widehat \phi^{(m-2)}(u) \dd u
	\leq 32^{m-2}
	\sum_{i=0}^{m-2} \frac{1}{2^i \, i!}
	\int_{0}^1 \frac{(\log (e^2Nu))^{i}}{\sqrt u}  \, \dd u \leq 32^{m-2}
	\sum_{i=0}^{m-2} \frac{(\log (e^2N) )^{i}}{2^i i!} \, 2 \,.
$$
Therefore, if we set $C_{\lambda,N} :=
C_{\lambda,N} := \frac{1}{\log N} + C_\lambda$ as in \eqref{eq:dein},
recalling \eqref{eq:prel3Q} we get
\begin{equation*}
\begin{split}
	\sumtwo{1\leq a\leq N}{z\in \Z^2} M^{N, \rm NT}_{0, a}(\phi, z)
	& \le 3 \sum_{m\ge 2} 2^{m}
	\, \sumtwo{1\leq a\leq N}{z\in \Z^2} I^{N, m}_{0, a}(\phi, z) \\
	& \le 3 \, e^\lambda \, N^{\frac{5}{2}} \, (\log N) \,
	\sum_{m\ge 2} (64c \, C_{\lambda,N})^m
	\sum_{i=0}^{m} \frac{(\log (e^2N) )^{i}}{2^i i!} \\
	& \le 3 \, e^\lambda \, N^{\frac{5}{2}} \, (\log N) \,
	\sum_{i\ge 0} \frac{(\log (e^2N) )^{i}}{2^i i!}
	\sum_{m\ge i} (64c \, C_{\lambda,N})^m 	\\
	& \le \frac{3 \, e^\lambda}{1-64c \, C_{\lambda,N}} \, N^{\frac{5}{2}} \, (\log N) \,
	\sum_{i \ge 0} \frac{\big( 32c \, C_{\lambda,N}
	\, \log (e^2N) \big)^{i}}{i!} \\
	& = \frac{3 \, e^\lambda}{1-64c \, C_{\lambda,N}} \, N^{\frac{5}{2}} \, (\log N) \,
	(e^2 N)^{32c \, C_{\lambda,N}} \,.
\end{split}
\end{equation*}
Given $\epsilon > 0$ we can fix $\lambda$ large so that
$32 c \, C_\lambda  < \frac{\epsilon}{2}$. Then
we have $C_{\lambda,N} = \frac{1}{\log N} + C_\lambda
< \frac{2}{3}\epsilon$ for large $N$. This concludes the proof of \eqref{eq:ntpb2}.
\qed

\smallskip
\section{Bounds on triple intersections}
\label{sec:triple}

In this section, we prove Proposition~\ref{prop:notripleQ}.
First we derive a representation for $M^{N, \rm T}_{s, t}(\phi, \psi)$,
which denotes
the sum in \eqref{eq:MNQ} restricted to $\bsA \cap \bsB\cap \bsC \ne \emptyset$
(recall \eqref{eq:trinotri}).

We denote by $\bsD=(D_1, \ldots, D_{|\bsD|}):=\bsA \cap \bsB\cap \bsC$,
with $D_i=(d_i, w_i)$, the locations of the triple intersections.
If we fix two consecutive triple intersections, say
$D_{i-1} = (a,w)$ and $D_i = (b,z)$, the contribution to \eqref{eq:MNQ} is given by
\begin{equation*}
	\bbE\big[\big(Z^{N, \beta_N}_{a, b}(w, z)\big)^3\big] -
	M^{N, \rm T}_{a, b}(w, z) \,,
\end{equation*}
where $M^{N, \rm T}_{a, b}(w, z)$ is defined in \eqref{eq:quantities},
together with $M^{N, \rm T}_{a, b}(\phi, z)$ and $M^{N, \rm T}_{a, b}(w, \psi)$.
Then we obtain from \eqref{eq:MNQ} the following representation
for $M^{N, \rm T}_{s, t}(\phi, \psi)$
(where $\bbE[\xi^3] := \bbE[\xi_{n,z}^3]$):
\be\label{eq:notriple}
\begin{split}
M^{N, \rm T}_{s, t}(\phi, \psi) & := \frac{1}{N^{3}}
	\sumtwo{\bsD \subseteq \{s+1, \ldots, t-1\} \times \Z^2}{|\bsD| \ge 1} \!\!\!\!\!\!
	\bbE[\xi^3]^{|\bsD|}\
    \Big(\bbE\big[\big(Z^{N, \beta_N}_{s, d_1}(\phi, w_1)\big)^3\big]
    - M^{N, \rm T}_{s, d_1}(\phi, w_1)\Big)  \, \cdot\\
    & \qquad \qquad \qquad \cdot \prod_{i=2}^{|\bsD|}
    \Big(\bbE\big[\big(Z^{N, \beta_N}_{d_{i-1}, d_i}(w_{i-1}, w_i)\big)^3\big]
    - M^{N, \rm T}_{d_{i-1}, d_i}(w_{i-1}, w_i) \Big) \, \cdot\\
    & \qquad \qquad \qquad \cdot \Big(\bbE\big[\big(
    Z^{N, \beta_N}_{d_{|\bsD|}, t}(w_{|\bsD|}, \psi)\big)^3\big]
    - M^{N, \rm T}_{d_{|\bsD|}, t}(w_{|\bsD|}, \psi)\Big) \,.
\end{split}
\ee

To prove Proposition \ref{prop:notripleQ}
we may assume $t=1$, by Remark \ref{rm:ZNscaling},
and also $\phi\geq 0$, $\psi\geq 0$ (otherwise just replace $\phi$ by $|\phi|$
and $\psi$ by $|\psi|$ to obtain upper bounds).
If we rename $(d_1, w_1) = (a,x)$ and $(d_{|\bs{D}|}, w_{|\bs{D}|}) = (b,y)$
in \eqref{eq:notriple}, we get the upper bound
\be\label{eq:MNNTbound}
\begin{aligned}
|M^{N, \rm T}_{0, N}(\phi, \psi)| \leq
& \ |\bbE[\xi^3]|  \cdot \underset{A_N}{\underbrace{
\frac{1}{N^{3}} \sum_{1\leq a\leq N \atop x\in \Z^2}
\Big(\bbE\big[\big(Z^{N, \beta_N}_{0, a}(\phi, x)\big)^3\big]
    - M^{N, \rm T}_{0, a}(\phi, x)\Big)}}\\
& \cdot \Big(\sum_{n=0}^\infty \rho_N^n\Big)
\cdot \underset{B_N}{\underbrace{\sup_{1\leq b\leq N \atop y\in\Z^2} \Big(\bbE\big[\big(
    Z^{N, \beta_N}_{b, N}(y, \psi)\big)^3\big]
    - M^{N, \rm T}_{b, N}(y, \psi)\Big)}} \,,
\end{aligned}
\ee
where we set
\be\label{eq:rhoN}
\rho_N:= |\bbE[\xi^3]|
\sum_{1\leq a\leq N \atop z\in \Z^2}
\Big(\bbE\big[\big(Z^{N, \beta_N}_{0, a}(0, z)\big)^3\big]
    - M^{N, \rm T}_{0, a}(0, z) \Big) \,.
\ee

Note that $\bbE[\xi^3]$ actually depends on $N$, and vanishes as $N\to\infty$.
Indeed, recalling that $\xi_{n,z} = e^{\beta_N \omega_{n,z} - \lambda(\beta_N)}-1$
and $\lambda(\beta) = \frac{1}{2}\beta^2 +O(\beta^3)$
as $\beta \to 0$, see \eqref{eq:xi} and \eqref{eq:genomega}, we have
\begin{equation}\label{eq:3momxi}
	\bbE[\xi^3] = e^{\lambda(3\beta_N)-3\lambda(\beta_N)}
	- 3 \, e^{\lambda(2\beta_N)-2\lambda(\beta_N)} +2 = O(\beta_N^3)
	= O\big((\log N)^{-\frac{3}{2}}\big) \,,
\end{equation}
where the last equality holds by \eqref{eq:sigmaN} and \eqref{eq:olap}.

Then, to prove Proposition~\ref{prop:notripleQ},
by the bound \eqref{eq:MNNTbound}
it would suffice to show that
\begin{equation*}
	\limsup_{N\to\infty} A_N \cdot B_N < \infty \qquad \text{and} \qquad
	\limsup_{N\to\infty} \rho_N < 1 \,,
\end{equation*}
so that the series
$\sum_{n=0}^\infty \rho_N^n = (1-\rho_N)^{-1}$ is bounded.
We are going to prove the following stronger result, which
implies the bound $|M^{N, \rm T}_{0, N}(\phi, \psi)|
= o(N^{-1/2 + \eta})$, for any fixed $\eta > 0$.

\begin{lemma}\label{lem:portmanteau}
The following relations hold as $N\to\infty$, for any fixed $\eps>0$:
\begin{enumerate}
\renewcommand{\theenumi}{\alph{enumi}}
\item\label{it:a} $A_N = o(N^{\eps-1/2})$;
\item\label{it:b} $B_N = o(N^\epsilon)$;
\item\label{it:c} $\rho_N = O((\log N)^{-1/2})$.
\end{enumerate}
\end{lemma}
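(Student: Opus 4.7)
The common starting point is the algebraic identity
\[
\bbE\big[Z^{N,\beta_N}_{s,t}(*,\dagger)^3\big] - M^{N,\mathrm{T}}_{s,t}(*,\dagger) \;=\; q^3 \,+\, 3\,q\,\bbvar\!\big[Z^{N,\beta_N}_{s,t}(*,\dagger)\big] \,+\, M^{N,\mathrm{NT}}_{s,t}(*,\dagger),
\]
valid for any $(*,\dagger) \in \{\phi,w\}\times\{\psi,z\}$ with $q := \bbE[Z^{N,\beta_N}_{s,t}(*,\dagger)] = q^N_{s,t}(*,\dagger)$; it follows from $\bbE[X^3] = (\bbE X)^3 + 3(\bbE X)\bbvar[X] + \bbE[(X-\bbE X)^3]$ combined with \eqref{eq:quantities}. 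I plan to bound each summand on the right-hand side separately, using (i) uniform estimates on the mean $q$, (ii) a pointwise renewal-theoretic formula for $\bbvar$, and (iii) the bounds on $M^{\mathrm{NT}}$ supplied by Lemma~\ref{lem:notriplebds}. The variance admits the pointwise representation, derived from the polynomial chaos expansion exactly as in the derivation of \eqref{eq:Q2avgbis},
\[
\bbvar\!\big[Z^{N,\beta_N}_{0,a}(\phi,x)\big] \;=\; \sumtwo{0 < n_1 \le a}{x_1 \in \Z^2} \big(q^N_{0,n_1}(\phi,x_1)\big)^2 \, U_N(a-n_1,\,x-x_1),
\]
with the obvious analogues for $Z^{N,\beta_N}_{b,N}(y,\psi)$ (ending in $(q^N_{n_k,N}(x_k,\psi))^2$) and for $Z^{\beta_N}_{0,a}(0,z)$ (starting with $q_{n_1}(x_1)^2$ and ending with $q_{a-n_k}(z-x_k)^2$). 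The key scalar input is the uniform bound $\sum_{m=0}^N U_N(m) = 1 + \bbvar[Z_N^{\beta_N}] = O(\log N)$, from \eqref{eq:quasi-ren2} combined with \eqref{eq:inter2}.

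Given these ingredients, parts (a) and (b) are direct. For (a), $\sum_{a,x}(q^N_{0,a}(\phi,x))^3 = O(N^2)$ by Riemann sum approximation, giving $O(1/N)$ after dividing by $N^3$; the variance contribution is bounded by $(|\phi|_\infty/N^3) \big(\sum_{n_1,x_1}(q^N_{0,n_1}(\phi,x_1))^2\big)\big(\sum_{m\ge 0}U_N(m)\big) = O((\log N)/N)$, after using $\sum_{n_1,x_1}(q^N)^2 \le |\phi|_\infty \sum q^N_{0,n_1}(\phi,x_1) = O(N^2)$; and \eqref{eq:ntpb2} delivers $O(N^{\epsilon-1/2})$ for the $M^{\mathrm{NT}}$ part, giving $A_N = O(N^{\epsilon-1/2})$. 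For (b), the same estimates applied pointwise in $(b,y)$ yield $q^N_{b,N}(y,\psi) \le |\psi|_\infty$, $\bbvar[Z^{N,\beta_N}_{b,N}(y,\psi)] \le |\psi|_\infty^2 \sum_m U_N(m) = O(\log N)$ uniformly, and $M^{N,\mathrm{NT}}_{b,N}(y,\psi) = O(N^\epsilon)$ by \eqref{eq:ntpb1} (the proof of which depends on $N$ only through $\sigma_N^2$, $R_N$, and $\hat G_\theta$, and so transfers verbatim to any sub-interval $[b,N]$ of length $\le N$); in total $B_N = O(N^\epsilon)$.

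The main obstacle is (c). After expanding, $\rho_N/|\bbE[\xi^3]|$ splits into $\sum_{a,z}q_a(z)^3 = O(1)$ (from $\sum_z q_a(z)^3 \le q_a(0)\,q_{2a}(0) = O(1/a^2)$), $\sum_{a,z}M^{N,\mathrm{NT}}_{0,a}(0,z) = O(1)$ by \eqref{eq:ntpb3}, plus the delicate $\sum_{a,z}q_a(z)\bbvar[Z^{\beta_N}_{0,a}(0,z)]$. For the latter, I would use $\sigma_N^2\bbvar = U_N - \sigma_N^2 q^2$ (from \eqref{eq:UNVar}) to reduce to $(1/\sigma_N^2)\sum_{a\le N,z}q_a(z)U_N(a,z) + O(1)$, and then the renewal representation $U_N(a,z) = \sum_{r\ge 1}\lambda_N^r\,\P(\tau^{(N)}_r = a,\,S^{(N)}_r = z)$ of \eqref{def:UNrenewal} to rewrite
\[
\sum_{a\le N,z}q_a(z)\,U_N(a,z) \,=\, \sum_{r\ge 1}\lambda_N^r\,\bbE\big[q_{\tau^{(N)}_r}(S^{(N)}_r)\,\ind_{\tau^{(N)}_r\le N}\big].
\]
Inserting the diagonal bound $q_n(x) \le c/n$ and $\tau^{(N)}_r \ge r$, and splitting the $r$-sum at $R_N \sim \log N/\pi$: the head ($r \le R_N$) contributes at most $c\sum_{r \le R_N}\lambda_N^r/r = O(\log\log N)$, since $\lambda_N^r = O(1)$ on this range and $\sum_{r\le R_N} 1/r = O(\log R_N)$; while the tail is bounded by $R_N^{-1}\sum_r\lambda_N^r\,\P(\tau^{(N)}_r\le N) = O(\log N/R_N) = O(1)$, via \eqref{eq:quasi-ren2}--\eqref{eq:inter2}. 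Thus $\sum q_a U_N = O(\log\log N)$, hence $\sum q_a\bbvar = O((\log N)(\log\log N))$ after the factor $1/\sigma_N^2 = O(\log N)$, and finally $\rho_N = O((\log\log N)/\sqrt{\log N}) = o(1)$. This is marginally weaker than the stated $O((\log N)^{-1/2})$; the sharper bound can be recovered by replacing the crude $q_n(x) \le c/n$ with the LCLT $q_n(x) \sim g_{n/2}(x)$ and integrating against the L\'evy-process limit $(\tau^{(N)}_r,S^{(N)}_r)/(N,\sqrt N) \Rightarrow \bsY_{r/\log N}$, which collapses the $\sum 1/r$ divergence responsible for the $\log\log N$. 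In any event, the bound $\rho_N = o(1)$ already makes the geometric series $\sum_n \rho_N^n \le (1-\rho_N)^{-1}$ bounded in \eqref{eq:MNNTbound}, completing the proof of Proposition~\ref{prop:notripleQ}.
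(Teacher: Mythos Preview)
Your treatment of parts~(a) and~(b) matches the paper's proof essentially line for line: the same three-term expansion, the same Riemann-sum and uniform bounds on the $q$-terms, the same $O(\log N)$ variance bound, and the same appeals to Lemma~\ref{lem:notriplebds}.

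For part~(c), your argument is correct but loses a $\log\log N$ factor compared to the paper, and the paper's route is both simpler and sharper than your proposed fix via the L\'evy limit.  After bounding $q_a(z)\le c/a$ uniformly in $z$, the paper sums out $z$ \emph{first} to get $\sum_z U_N(a,z)=U_N(a)$, and then invokes the uniform renewal bound
\[
U_N(a)\;\le\; C\,\frac{\log N}{N}\,\hat G_\theta\!\Big(\frac{a}{N}\Big)\;\le\;
C'\,\frac{\log N}{a\,\big(\log\tfrac{e^2N}{a}\big)}
\]
(combining \eqref{est:UNunif} with \eqref{eq:Gthetaunif}).  Multiplying by $q_a(z)\le c/a$ and $\sigma_N^{-2}=O(\log N)$ leaves
\(
\displaystyle\sum_{a\le N}\frac{(\log N)^2}{a^2\,\log(e^2N/a)},
\)
which is $O(\log N)$ after splitting at $a=\sqrt{N}$ (for $a\le\sqrt{N}$ the denominator $\log(e^2N/a)\ge\tfrac12\log N$ cancels one $\log N$; the tail is $O((\log N)^2/\sqrt{N})$).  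With $\bbE[\xi^3]=O((\log N)^{-3/2})$ this gives the claimed $\rho_N=O((\log N)^{-1/2})$ directly.  The factor $1/\log(e^2N/a)$ that the paper retains, and that you discard when replacing $\tau_r^{(N)}$ by $r$, is exactly what prevents the $\sum_{r\le R_N}1/r$ divergence.  Your observation that the weaker bound $\rho_N=o(1)$ already suffices for Proposition~\ref{prop:notripleQ} is correct.
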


Before the proof,
we recall that $\bbE\big[\big(Z^{N, \beta_N}_{a, b}(*, \dagger) -
q^N_{a, b}(*, \dagger)\big)^3\big] = M^{N, \rm T}_{a, b}(*, \dagger)
+ M^{N, \rm NT}_{a, b}(*, \dagger)$,
for any $* \in \{w, \phi\}$, $\dagger \in \{z, \psi\}$, hence
\begin{equation} \label{eq:baseq}
\begin{split}
	& \bbE\big[\big(Z^{N, \beta_N}_{a, b}(*, \dagger)\big)^3\big] -
	M^{N, \rm T}_{a, b}(*, \dagger) \\
	& \ \ = q^N_{a, b}(*,\dagger)^3 + 3 \, q^N_{a, b}(*,\dagger) \,
	\bbvar\big(Z^{N, \beta_N}_{a, b}(*,\dagger)\big) + M^{N, \rm NT}_{a, b}(*,\dagger).
\end{split}
\end{equation}
Also note that $M^{N, \rm NT}_{a, b}(*, \dagger)\geq 0$,
see \eqref{eq:MNQ} and \eqref{eq:prel3Q}.

\begin{proof}[Proof of Lemma \ref{lem:portmanteau}]
We first prove point \eqref{it:b}. By definition, see \eqref{eq:qNavg0},
$$
	q^N_{b, N}(y, \psi) = \sum_{z\in \Z^2} q_{N-b}(z-y)
	\psi\big(\tfrac{z}{\sqrt N}\big) \leq |\psi|_\infty.
$$
If we replace $\psi$ by the constant $1$ in the averaged partition function
$Z^{N,\beta_N}_{b, N}(y, \psi)$ we obtain
the point-to-plane partition function $Z^{\beta_N}_{N-b}(y)$,
see \eqref{eq:avgxpsi} and \eqref{eq:Zx}.
Then, by \eqref{eq:inter2},
\begin{equation}\label{eq:varlog}
	\bbvar\big(Z^{N, \beta_N}_{b, N}(y, \psi)\big) \leq
	\bbE\big[Z^{N, \beta_N}_{b, N}(y, \psi)^2 \big]
	\le |\psi|_\infty^2 \, \bbE\big[ Z^{\beta_N}_{N-b}(y)^2 \big] = O(\log N) \,.
\end{equation}
Lastly, by \eqref{eq:ntpb1}, we have
$$
	M^{N, \rm NT}_{b, N}(y, \psi) = O(N^\eps) \,.
$$
It suffices to plug these estimates into \eqref{eq:baseq} with $* = y$, $\dagger = \psi$ and
point \eqref{it:b} follows.

\smallskip

Next we prove point \eqref{it:a}.
First note that
\begin{align*}
\frac{1}{N^3} \sum_{1\leq a\leq N \atop x\in \Z^2} q^N_{0,a}(\phi, x)^3
& = \, \frac{1}{N^3}
\sum_{1\leq a\leq N \atop x\in \Z^2}
\Big(\sum_{y\in\Z^2} \phi\big(\tfrac{y}{\sqrt N}\big)q_a(x-y)\Big)^3 \\
& \le \, \frac{|\phi|_\infty^2}{N^3} \sum_{1\leq a\leq N \atop x\in\Z^2}
\sum_{y\in\Z^2} q_a(x-y)
\, \phi\big(\tfrac{y}{\sqrt N}\big)
= \frac{|\phi|_\infty^2}{N^3} \sum_{1\leq a\leq N} \sum_{y\in\Z^2}
\, \phi\big(\tfrac{y}{\sqrt N}\big) \\
&
= \,\frac{|\phi|_\infty^2 }{N} \sum_{y\in\Z^2} \frac{1}{N} \phi\big(\tfrac{y}{\sqrt N}
\big) = O\Big(\frac{1}{N}\Big),
\end{align*}
where the last sum converges to $\int \phi(x) \dd x$ by Riemann sum approximation.
Next note that we can bound
$\bbvar\big(Z^{N, \beta_N}_{0,a}(\phi, x)\big) \le
|\phi|_\infty^2 \, \bbE[Z^{\beta_N}_a(x)^2] = O(\log N)$,
arguing as in \eqref{eq:varlog}, hence
\begin{align*}
\frac{1}{N^3} \sum_{1\leq a\leq N \atop x\in \Z^2} q^N_{0, a}(\phi, x)
\bbvar\big(Z^{N, \beta_N}_{0,a}(\phi, x)\big)
& \, \leq \frac{1}{N^3} \sum_{1\leq a\leq N \atop x, y\in \Z^2}
\phi\big( \tfrac{y}{\sqrt N} \big) q_a(x-y) \, O(\log N) \\
& \, = \frac{1}{N} \, O(\log N) \sum_{y\in \Z^2} \frac{1}{N}
\phi\big(\tfrac{y}{\sqrt N}\big) = O\Big(\frac{\log N}{N}\Big) \,.
\end{align*}
Lastly, by \eqref{eq:ntpb2}, we have
\begin{align*}
	\frac{1}{N^3} \sum_{1\leq a\leq N \atop x\in \Z^2} M^{N, \rm NT}_{0,a}(\phi, x)
	=O(N^{\eps-\frac{1}{2}}).
\end{align*}
Plugging these estimates into \eqref{eq:baseq} with $* = \phi$ and $\dagger = x$,
point \eqref{it:a} follows.

\smallskip

We finally prove point \eqref{it:c}.
By the local limit theorem \eqref{eq:qas}
we have $q_a(x)\leq \frac{c}{a}$ for some $c < \infty$,
uniformly in $a\in\N$ and $x\in \Z^2$. Therefore, recalling \eqref{eq:3momxi}, we have
$$
	\bbE[\xi^3] \sum_{1\leq a\leq N \atop z\in \Z^2} q^N_{0,a}(0, z)^3 \leq \bbE[\xi^3]
	\sum_{1\leq a\leq N \atop z\in \Z^2} \frac{c^2}{a^2} q_a(z)
	= \bbE[\xi^3] \sum_{1\leq a\leq N} \frac{c^2}{a^2} = O((\log N)^{-3/2}) \, .
$$
Next we bound $\bbvar\big(Z^{\beta_N}_{0,a}(0, z)\big) \le
\sigma_N^{-2} \, U_N(a,z)$, see \eqref{eq:UNVar},
and note that
\begin{equation*}
	\sum_{z\in\Z^2} U_N(a,z) = U_N(a) \le C \, c_{ \theta}
	\, \frac{\log N}{a \, \log (e^2 N/a)} \,,
\end{equation*}
by \eqref{eq:UNnrenewal}, \eqref{eq:UnLLT} and \eqref{eq:Gthetaunif}.
Bounding $q_a(x)\leq \frac{c}{a}$
and $\sigma_N^{-2} = O(\log N)$, see \eqref{eq:sigmaN}
and \eqref{eq:olap}, we obtain
\begin{align*}
	 \bbE[\xi^3] \sum_{1\leq a\leq N \atop z\in \Z^2} q^N_{0, a}(0, z)
	 \bbvar\big(Z^{N, \beta_N}_{0,a}(0, z)\big)
	\leq c' \, \bbE[\xi^3] \sum_{1\leq a\leq N} \frac{1}{a^2} \,
	\frac{(\log N)^{2}}{\log (e^2 N/a)} \,.
\end{align*}
For $a \le \sqrt{N}$ we can bound
$\log (e^2 N/a) \ge \log (e^2 \sqrt{N}) \ge \frac{1}{2} \log N$,
while for $\sqrt{N} < a \le N$ we can simply bound
$\log (e^2 N/a) \ge \log e^2 = 2$. This shows that the last sum
is uniformly bounded, since $\sum_{a \ge 1} \frac{2 \, \log N}{a^2} +
\sum_{a > \sqrt{N}} \frac{(\log N)^{2}}{2 \, a^2}
= O(\log N) + O(\frac{(\log N)^{2}}{\sqrt{N}})$.
We thus obtain
\begin{equation*}
	 \bbE[\xi^3] \sum_{1\leq a\leq N \atop z\in \Z^2} q^N_{0, a}(0, z)
	 \bbvar\big(Z^{N, \beta_N}_{0,a}(0, z)\big)
	 = O\big(  \bbE[\xi^3] \, \log N \big) = O\big( (\log N)^{-1/2} \big) \,.
\end{equation*}
Lastly, by \eqref{eq:ntpb3}, we also have
\begin{align*}
	\bbE[\xi^3] \sum_{1\leq a\leq N \atop z\in \Z^2} M^{N, \rm NT}_{0,a}(0, z)
	= \bbE[\xi^3] \, O(1) = O\big( (\log N)^{-3/2} \big) \,.
\end{align*}
If we plug the previous bounds into \eqref{eq:baseq} with $* = 0$ and $\dagger = z$,
point \eqref{it:c} is proved.
\end{proof}

\smallskip
\section{Proof for the stochastic heat equation}\label{sec:SHE}

In this section we prove Theorems \ref{th:SHEvariance} and \ref{th:SHE3rdmom}
on the variance and third moment of the solution to the stochastic heat equation.

We first give a useful representation
of $u^\epsilon(t,\phi):=\int_{\R^2} \phi(x) u^\epsilon(t,x)\,\dd x$.
By a Feynman-Kac representation
and the definition of the Wick exponential (see \cite{CSZ17b} for details),
it follows that $u^\epsilon(t,\phi)$
 is equal in distribution to the Wiener chaos expansion
\begin{align}\label{SHEexpand}
	u^\epsilon(t,\phi)& \stackrel{d}{=}\int_{\mathbb{R}^2} \phi(x)\,\dd x +\sum_{r\geq 1}
	\beta_\epsilon^r \int_{0<t_1<\cdots<t_r<\epsilon^{-2}t} \int_{(\R^2)^r} \,\, \prod_{i=1}^r
	W(\dd t_i \,\dd x_i) \, \cdot \nonumber\\
	& \qquad\qquad  \cdot \bigg\{\int_{\R^2} \dd x \,\,
	\epsilon^2\phi(\epsilon x) \int_{(\mathbb{R}^2)^r}  \prod_{i=1}^r
	g_{t_i-t_{i-1}}(\hat x_{i-1},\hat x_{i} ) \, j(\hat x_i-x_i) \,\dd \hat x_i \, \bigg\} \,
\end{align}
with the convention that $t_0 := 0$ and $\hat x_0=x$.

Expression \eqref{SHEexpand} is the starting point to prove both
Theorems~\ref{th:SHEvariance} and \ref{th:SHE3rdmom}.
To analyze this expression, we first need to
extend the renewal theory framework, described in Subsections~\ref{sec:outline}
and~\ref{sec:renewal}, to continuum distributions. The key results,
described in the next subsection,
are analogous to those obtained in the discrete setting, see \cite[Remark 1.7]{CSZ18}.

\subsection{Renewal framework}
\label{sec:rf}

Fix a continuous function $r: [0,\infty) \to (0,\infty)$ such that\footnote{The precise constant
$4 \pi$ in \eqref{eq:ras} is the one relevant for us, but any
other positive constant would do.}
\begin{equation} \label{eq:ras}
	r(t) = \frac{1}{4\pi t} \big(1+o(1)\big) \qquad \text{as } t \to \infty \,.
\end{equation}
For $\epsilon > 0$, we consider i.i.d.\ random variables $\big(\cT_{i}^{\,\epsilon}\big)_{i \ge 1}$
 with density
\begin{align}\label{law_cT}
	\P(\cT^{\,\epsilon}_{i} \in \dd t) = \frac{r(t)}{\cR_\epsilon }  \,
	\ind_{[0,\epsilon^{-2}]}(t) \,\dd t \,,
\end{align}
where $\cR_\epsilon := \int_0^{\epsilon^{-2}} r(t) \, \dd t$ is the normalization constant.
Note that $\cT^\epsilon_1 + \ldots + \cT^\epsilon_k$
is a continuum analogue of $\tau^{(N)}_k$ in \eqref{eq:tau},
see \eqref{eq:un}-\eqref{eq:XN}, with the identification $N = \epsilon^{-2}$.

Let us quote some relevant results from \cite{CSZ18} that will be needed in the sequel.
\begin{itemize}
\item By \cite[Proposition 1.3]{CSZ18}, we have the convergence in distribution
\begin{equation} \label{eq:convlaw}
	\Big( \epsilon^{2} \big( \cT^\epsilon_1 + \ldots + \cT^\epsilon_{\lfloor
	s \log \epsilon^{-2} \rfloor} \big) \Big)_{s \ge 0}
	\ \xrightarrow[\ \epsilon \to 0\ ]{d} \ (Y_s)_{s \ge 0} \,,
\end{equation}
where $(Y_s)_{s \ge 0}$ is the Dickman subordinator, whose marginal
density is given by \eqref{eq:fst}.

\item By \cite[Lemma 6.1]{CSZ18},
the following large deviations bound holds, with $c \in (0,1)$:
\begin{equation} \label{eq:largedev}
	\P \big( \cT^\epsilon_1 + \ldots + \cT^\epsilon_{\lfloor
	s \log \epsilon^{-2} \rfloor} \le \epsilon^{-2} \big) \le e^{s - c s \log s} \,,
	\qquad \forall \epsilon \in (0,1), \ \forall s \in [0,\infty) \,.
\end{equation}
\end{itemize}

Let us now take $\lambda_\epsilon$ such that
\begin{equation} \label{eq:lambdaeps}
	\lambda_\epsilon := 1 + \frac{\theta}{\log \epsilon^{-2}} \big(1+o(1)\big) \,,
	\qquad \text{for some } \theta \in \R \,.
\end{equation}
Then it follows by Riemann sum approximation (set
$r = s \, \log \epsilon^{-2}$) that for all $T \in [0,1]$
\begin{equation}\label{eq:Riemann}
\begin{split}
	\frac{1}{\log \epsilon^{-2}} \, \sum_{r \ge 1} \lambda_\epsilon^r \: \P\big(
	\cT^\epsilon_1 + \ldots + \cT^\epsilon_{r} \le \epsilon^{-2} T \big)
	\ \xrightarrow[\ \epsilon \to 0 \ ]{} \
	& \int_0^\infty e^{\theta u} \, \P(Y_u \le T) \, \dd u \,.
\end{split}
\end{equation}

This relation will play a crucial role.
We now list some approximations that we can make
in the left hand side of \eqref{eq:Riemann}, without
affecting the convergence.
\begin{enumerate}
\item\label{it:1} \emph{We can restrict the sum
to $r \le K \log \epsilon^{-2}$, for large $K  > 0$.}
Indeed, it is easily seen by \eqref{eq:largedev} and \eqref{eq:lambdaeps}
that the contribution
of $r > K \log \epsilon^{-2}$ to the sum
in \eqref{eq:Riemann} is small, uniformly in $\epsilon$, for $K$ large.

\item\label{it:2}
\emph{We can restrict the probability
to the event ``there are no consecutive short increments''},
where we say that an increment $\cT_i^\epsilon$ is \emph{short} if and only if $\cT_i^\epsilon
\le (\log \epsilon^{-2})^2$.
Indeed, the probability that an increment is short is, by \eqref{eq:ras}-\eqref{law_cT},
\begin{equation} \label{eq:peps}
	p_\epsilon := \P\big( \cT_i^\epsilon \le (\log \epsilon^{-2})^2 \big)
	= \frac{\int_0^{(\log \epsilon^{-2})^2} r(t) \, \dd t}
	{\int_0^{\epsilon^{-2}} r(t) \, \dd t}
	= O \bigg( \frac{\log (\log \epsilon^{-2})}{\log \epsilon^{-2}}\bigg) \,,
\end{equation}
hence the probability of having two consecutive short increments among
$\cT_1^\epsilon, \ldots, \cT_r^\epsilon$ is
\begin{equation*}
	\P\Bigg( \bigcup_{i=1}^{r-1}
	\big\{ \cT_i^\epsilon \le (\log \epsilon^{-2})^2,
	\cT_{i+1}^\epsilon \le (\log \epsilon^{-2})^2 \big\} \Bigg)
	\le r \, p_\epsilon^2
	\le O \bigg( \frac{r \, \big(\log (\log \epsilon^{-2}) \big)^2}
	{(\log \epsilon^{-2})^2}\bigg) \,,
\end{equation*}
which vanishes as $\epsilon \to 0$, when we restrict to $r \le K \log \epsilon^{-2}$.

\item\label{it:3} \emph{We can further restrict the probability
to the event ``the first increment $\cT_1^\epsilon$ is long, i.e.\ not short''},
simply because $p_\epsilon \to 0$ as $\epsilon \to 0$, see \eqref{eq:peps}.

\end{enumerate}

\subsection{Proof of Theorem~\ref{th:SHEvariance}}

It follows from the expansion \eqref{SHEexpand} that
$\bbE[u^\epsilon(t,\phi)]=\int_{\R^2} \phi(x) \,\dd x$ and that the variance of $u^\epsilon(t,\phi)$ is given by
\begin{align} \label{Var}
	\bbvar \big(u^\epsilon(t,\phi) \big) =
	\epsilon^4\,\int_{\R^2\times \R^2} \,\,\phi( \epsilon \hat x) \, \phi(\epsilon \tilde x)\, \,
	K^{\epsilon}_{t}(\hat x , \tilde x)   \,\, \dd \hat x \, \dd \tilde x
\end{align}
where, using the conventions $\hat x_0=\hat x$, $\tilde x_{0}=\tilde x$
and $\vec{t} = (t_1,\ldots, t_r)$, we define
\begin{equation} \label{eq:argu}
\begin{split}
	K^{\epsilon}_{t}(\hat x , \tilde x)&:=
	 \sum_{r\geq 1} \beta_\epsilon^{2 r} \int_{0<t_1<\cdots<t_r<\epsilon^{-2}t} \, \dd \vec{t}
	 \, \int_{(\mathbb{R}^2)^r} \prod_{i=1}^{r}
	\dd x_i \, \int_{(\R^2)^{2r}}
	\, \prod_{i=1}^{r}\dd \hat x_i \, \dd {\tilde x}_i\, \\
	&\qquad\qquad \cdot \prod_{i=1}^r g_{t_i-t_{i-1}}( \hat x_{i-1}, \hat x_i )  \,
	g_{t_i-t_{i-1}}(\tilde x_{i-1}, \tilde x_i )  \, j(\hat x_i-x_i) \,
	 j(\tilde x_i-x_i)  \\
	 &= \sum_{r\geq 1} \beta_\epsilon^{2 r} \int_{0<t_1<\cdots<t_r<\epsilon^{-2}t} \,\,\,
	 \dd  \vec t  \,\, \int_{(\R^2)^{2r}}
	\, \prod_{i=1}^{r}\dd \hat x_i \, \dd {\tilde x}_i    \\
	&\qquad\qquad \cdot \prod_{i=1}^r g_{t_i-t_{i-1}}( \hat x_{i-1}, \hat x_i )  \,
	g_{t_i-t_{i-1}}(\tilde x_{i-1}, \tilde x_i ) \, J(\hat x_i-\tilde x_i) \,,
\end{split}
\end{equation}
where the second equality holds because
$j(-x) = j(x)$ and we recall that $J = j * j$.

We now exploit the identity
\begin{equation}\label{eq:identity}
	g_t(x) \,g_t(y) = 4 \, g_{2t}(x-y) \, g_{2t}(x+y) \,.
\end{equation}
If we set $\hat x_i-\tilde x_i =: z_i$ and $\hat x_i+\tilde x_i=w_i$
and take into account that the Jacobian
of the transformation $(x,y)\mapsto (x-y,x+y)$ on $(\mathbb{R}^2)^2$ equals $1/4$,
we obtain, with $z_0=\hat x - \tilde x$,
\begin{equation}\label{NewMathcalU}
\begin{split}
	K^{\epsilon}_{t}(\hat x , \tilde x)
	&= \sum_{r\geq 1} \beta_\epsilon^{2r}
	\int_{0<t_1<\cdots<t_r<\epsilon^{-2}t} \, \dd \vec{t} \, \int_{(\R^2)^{2r}} \,
	\dd \vec{z} \, \dd \vec{w} \\
	& \qquad\qquad\qquad\prod_{i=1}^r g_{2(t_i-t_{i-1})}( w_i- w_{i-1} )
	\, g_{2(t_i-t_{i-1})}(z_i-z_{i-1} ) \, J(z_i) \, \\
	&= \sum_{r\geq 1} \beta_\epsilon^{2r} \int_{0<t_1<\cdots<t_r<\epsilon^{-2}t}
	\, \dd\vec{t} \, \int_{(\R^2)^{r}} \, \dd \vec{z} \, \,
	\prod_{i=1}^r \, g_{2(t_i-t_{i-1})}( z_i- z_{i-1} )    \, J(z_i)  \,.
\end{split}
\end{equation}

Note that variables $z_i$ with $i \ge 1$ lie in $\mathrm{supp}(J)$,
which is a compact subset of $\R^2$, while
$z_0 = \hat x - \tilde x$ is of order $\epsilon^{-1}$, in view of \eqref{Var}.
For this reason, it is convenient to isolate the integrals over $t_1$, $z_1$
and change variable $t_1 \to \epsilon^{-2} t_1$.
Observing that $g_{\epsilon^{-2}t}(x) = \epsilon^2 g_t(\epsilon x)$,
and renaming $(t_1, z_1)$ as $(s, z)$, we obtain
\begin{equation} \label{eq:Kdue}
\begin{split}
	K^{\epsilon}_{t}(\hat x , \tilde x)
	&= \int_0^{t} \dd s \, \int_{\R^2} \dd z \,
	g_{2 s}\big(\epsilon ( z - (\hat x - \tilde x))\big) \, J(z) \,
	\bK^\epsilon_{t-s}(z) \,,
\end{split}	
\end{equation}
where we define the new kernel $\bK^\epsilon_T(z)$ as follows:
\begin{equation}\label{eq:bK}
	\bK^\epsilon_T(z) :=
	\sum_{r \ge 0} \beta_\epsilon^{2(r+1)} \, \int_{0 < t_1 < \ldots < t_r < \epsilon^{-2} T} \, \dd\vec{t} \,
	\int_{(\R^2)^r} \dd \vec{z} \,
	\prod_{i=1}^r \, g_{2(t_i-t_{i-1})}( z_i- z_{i-1} )    \, J(z_i)  \,,
\end{equation}
where $z_0 := z$ and we agree that for $r=0$ the integrals equal $1$.

This key expression will be analyzed using renewal theory.
Note that by \eqref{eq:gu}
\begin{equation} \label{eq:asg2}
	g_{2t}(y-x) = \frac{1}{4\pi t}+ O\bigg(\frac{1}{t^2}\bigg) \qquad \text{as } t \to \infty \,,
	\qquad \text{uniformly in } x,y \in \mathrm{supp}(J) \,,
\end{equation}
so the dependence on the space variables $z_i$ in \eqref{eq:bK}
should decouple.
We can  make this precise using the approximations described
in Subsection~\ref{sec:rf}. We proceed in three steps.

\medskip

\noindent
{\bf Step 1: First approximation.}
Note that $\beta_\epsilon$, see \eqref{eq:betaeps2}, may be rewritten as follows:
\be\label{eq:betaeps2b}
	\beta_\eps^2 = \frac{4\pi}{\log \epsilon^{-2}}
	+ \frac{4 \rho+o(1)}{(\log \epsilon^{-2})^2} \,.
\ee

We first obtain a domination of $\bK^\epsilon_T(z)$ by a renewal quantity.
Let us define
\begin{equation} \label{eq:errebar}
	\bar r(t) := \sup_{z' \in \mathrm{supp}(J)} \int_{\R^2} g_{2t}(z-z') \, J(z) \, \dd z \,.
\end{equation}
Note that $\bar r(t) = \frac{1}{4\pi t} + O(\frac{1}{t^2})$ as $t\to\infty$,
thanks to \eqref{eq:asg2}, hence
\begin{equation} \label{eq:barcR}
	\bar \cR_\epsilon = \int_0^{\epsilon^{-2}} \bar r(t) \, \dd t
	= \frac{1}{4\pi} \log \epsilon^{-2} + O(1) \qquad \text{as } \epsilon \to 0 \,.
\end{equation}
If we denote by $(\bar \cT_i^\epsilon)_{i\in\N}$ i.i.d.\ random variables defined as in
\eqref{law_cT}, more precisely
\begin{align}\label{law_cTbar}
	\P(\bar\cT^{\,\epsilon}_{i} \in \dd t) = \frac{\bar r(t)}{\bar\cR_\epsilon }  \,
	\ind_{[0,\epsilon^{-2}]}(t) \,\dd t \,,
\end{align}
we can bound $\bK^\epsilon_T(z)$ from above for $T \le 1$,
uniformly in $z \in \mathrm{supp}(J)$, as follows:
\begin{equation}\label{eq:bKub1}
\begin{split}
	\sup_{z\in\mathrm{supp}(J)} \bK^\epsilon_T(z)
	& \le \beta_\epsilon^2 \,
	\sum_{r \ge 0} \beta_\epsilon^{2r} \, \int_{0 < t_1 < \ldots < t_r < \epsilon^{-2} T} \,
	\prod_{i=1}^r \, \bar r(t_i-t_{i-1})  \, \dd\vec{t} \\
	& = \beta_\epsilon^2 \, \sum_{r \ge 0} (\beta_\epsilon^2 \, \bar\cR_\epsilon)^{r} \,
	\P(\bar\cT_1^\epsilon + \ldots + \bar\cT_r^\epsilon < \epsilon^{-2}T) \\
	& \le \tfrac{c_1}{\log \epsilon^{-2}} \, \sum_{r \ge 0}
	\big( 1 + \tfrac{c_2}{\log \epsilon^{-2}} \big)^{r} \,
	\P(\bar\cT_1^\epsilon + \ldots + \bar\cT_r^\epsilon < \epsilon^{-2}T) \,,
\end{split}
\end{equation}
where the last inequality holds by \eqref{eq:betaeps2b} and \eqref{eq:barcR},
for suitable $c_1, c_2 \in (0,\infty)$.

\smallskip

The last line of \eqref{eq:bKub1}
is comparable to the left hand side of \eqref{eq:Riemann},
so we can apply the approximations \eqref{it:1}-\eqref{it:3} described
in Subsection~\ref{sec:rf}. In terms of $\bK_T^\epsilon(z)$,  see \eqref{eq:bK}, these
approximations correspond to restricting the sum to $r \le K \log \epsilon^{-2}$
for a large constant $K > 0$, by \eqref{it:1}, and to restricting the integral over $\vec{t}$ to the
following set, by \eqref{it:2}-\eqref{it:3}:
\begin{equation}\label{eq:set}
\begin{split}
	\mathfrak{I}_T^\epsilon
	:= \big\{ & 0<t_1<\cdots<t_r<\epsilon^{-2} \, T : \quad
	t_1 >(\log \epsilon^{-1})^2
	\ \ \text{and, for every $1 \le i \le r-1$,} \\
	& \qquad \qquad \qquad \text{either  } \ t_i - t_{i-1} > (\log \epsilon^{-2})^2
	\ \text{ or } \ t_{i+1} - t_{i} > (\log \epsilon^{-2})^2  \,\big\} \,.
\end{split}
\end{equation}

Summarizing, when we send $\epsilon \to 0$ followed by $K \to \infty$, we can write
\begin{equation}\label{eq:approx1}
	\bK^\epsilon_T(z)
	= \tilde\bK^\epsilon_{T,K}(z) + o(1) \qquad
	\text{uniformly for } z \in \mathrm{supp}(J) \,,
\end{equation}
where we define, with $t_0 := 0$ and $z_0 := z$,
\begin{equation}\label{eq:bKtilde}
	\tilde\bK^\epsilon_{T,K}(z) :=
	\sum_{r = 0}^{K \log \epsilon^{-2}}
	\beta_\epsilon^{2(r+1)} \, \int_{\mathfrak{I}_T^\epsilon} \, \dd\vec{t} \,
	\int_{(\R^2)^r} \dd \vec{z} \,
	\prod_{i=1}^r \, g_{2(t_i-t_{i-1})}( z_i- z_{i-1} )    \, J(z_i) \,.
\end{equation}

\medskip

\noindent
{\bf Step 2: Second approximation.}
Given $r \in \N$,
let us denote by $\mathsf{S}_\epsilon$
and $\mathsf{L}_\epsilon$ the subsets of indexes $i \in \{1,\ldots, r\}$
corresponding to short and long increments:
\begin{gather*}
	\mathsf{S}_\epsilon := \{i \in \{1,\ldots, r\}: \ t_i - t_{i-1} \le (\log \epsilon^{-2})^2\} \,, \\
	\mathsf{L}_\epsilon := \{i \in \{1,\ldots, r\}: \ t_i - t_{i-1} > (\log \epsilon^{-2})^2\} \,.
\end{gather*}
We can then decompose the last product in \eqref{eq:bKtilde} as follows:
\begin{equation*}
\begin{split}
	\prod_{i=1}^r g_{2(t_i-t_{i-1})}( z_i- z_{i-1} )    \, J(z_i)
	= \prod_{i \in \mathsf{S}_\epsilon} g_{2(t_i-t_{i-1})}( z_i- z_{i-1} )    \,
	\prod_{i \in \mathsf{L}_\epsilon} g_{2(t_i-t_{i-1})}( z_i- z_{i-1} )    \,
	\prod_{i=1}^r  J(z_i) \,.
\end{split}
\end{equation*}
We now make replacements and integrations, in order to simplify this expression.

\smallskip

For each $i\in \mathsf{L}_\epsilon$  we replace $g_{2(t_{i}-t_{i-1})}(z_{i}-z_{i-1})$ by
$r(t_i - t_{i-1})$, where we set
\begin{equation} \label{eq:erre}
	r(t) := \langle J,g_{2 t} \,J\rangle
	:=\int_{\R^2} \int_{\R^2} J(x) g_{2t}(x-y) J(y)\,\dd x\,\dd y \,.
\end{equation}
The error
from each such replacement is $ \exp \big\{O\big( (\log \epsilon^{-1})^{-2} \big)\big\}$,
since one easily sees that
$g_{2(t_{i}-t_{i-1})}(z_{i}-z_{i-1})=g_{2(t_{i}-t_{i-1})}(x-y) \,  \exp \big\{O\big( (t_i - t_{i-1})^{-1} \big)\big\} $
and we have $t_{i}-t_{i-1}>(\log \epsilon^{-2})^2$
(recall that $x-y$ and $ z_{i+1}-z_i$ are in the
support of $J$, which is compact).
Since we are restricted to $r\leq K \log \epsilon^{-2}$, see \eqref{eq:bKtilde}, we have
$|\mathsf{L}_\epsilon| \leq K \log \epsilon^{-2}$, hence the total error from all these replacements is
$ \exp \big\{O\big((\log \epsilon^{-1})^{-1} \big)\big\} = (1+o(1))$.
We have shown that
\begin{equation*}
\begin{split}
	\prod_{i=1}^r g_{2(t_i-t_{i-1})}( z_i- z_{i-1} )    \, J(z_i)
	= \big(1+o(1)\big)
	\, \prod_{i \in \mathsf{S}_\epsilon} g_{2(t_i-t_{i-1})}( z_i- z_{i-1} )    \,
	\prod_{i \in \mathsf{L}_\epsilon} r(t_i - t_{i-1}) \,
	\prod_{i=1}^r  J(z_i) \,.
\end{split}
\end{equation*}

We now proceed by integrating successively $\dd z_i$ for $i=r,r-1, \ldots,1$ as follows:
\begin{itemize}
\item for  $i\in \mathsf{L}_\epsilon$
the integral over $\dd z_i$ amounts to $\int_{\R^2} J(z_i) \,\dd z_i=1$;

\item for $i\in \mathsf{S}_\epsilon$  we integrate both $\dd z_{i-1}$ and $\dd z_{i}$ which
gives, see \eqref{eq:erre},
\begin{align*}
	\int_{(\R^2)^2} J(z_i) \, g_{2(t_{i}-t_{i-1})} (z_{i}-z_{i-1}) \, J(z_{i}) \,\dd z_{i-1}\,\dd z_{i}
	= \langle J, g_{2(t_{i}-t_{i-1})} J\rangle = r(t_i - t_{i-1}) \,,
\end{align*}

\end{itemize}
This sequence of integrations is consistent, i.e. it does not result to integrating a variable $\dd z_i$ twice,
because on the set $\mathfrak{I}_T^\epsilon$,
see \eqref{eq:bKtilde}
and \eqref{eq:set}, there are not two consecutive indices $i$ in $\mathsf{S}_\epsilon$.
Therefore, uniformly for $r \le K \log \epsilon^{-2}$, we have shown that
\begin{equation} \label{eq:pluggo}
	\int_{(\R^2)^r} \dd \vec{z} \, \prod_{i=1}^r g_{2(t_i-t_{i-1})}( z_i- z_{i-1} )    \, J(z_i)
	= \big(1+o(1)\big) \, \prod_{i=1}^r r(t_i - t_{i-1}) \,.
\end{equation}

Note that $r(t) = \frac{1}{4\pi t} + O(\frac{1}{t^2})$, by \eqref{eq:erre} and \eqref{eq:asg2},
so we can consider i.i.d.\ random variables $\cT_i^\epsilon$
with law \eqref{law_cT}. When we plug \eqref{eq:pluggo} into \eqref{eq:bKtilde},
the approximations \eqref{it:1}-\eqref{it:3}
described in Subsection~\ref{sec:rf} show that
we can remove the restrictions $r \le K \log \epsilon^{-2}$ on the sum
and $\vec{t} \in \mathfrak{I}_T^\epsilon$ on the integral.
Recalling \eqref{eq:approx1}, we have finally shown that as $\epsilon \to 0$
\begin{equation}\label{eq:approx2}
	\bK^\epsilon_T(z) =
	\big(1+o(1) \big) \, \hat\bK^\epsilon_T \,+\, o(1)  \qquad
	\text{uniformly for } z \in \mathrm{supp}(J) \,,
\end{equation}
where, recalling \eqref{eq:betaeps2b}, we define
\begin{equation}\label{eq:bKhat}
\begin{split}
	\hat\bK^\epsilon_T
	&:= 	\beta_\epsilon^2 \,
	\sum_{r \ge 0} \beta_\epsilon^{2r} \, \int_{0 < t_1 < \ldots < t_r < \epsilon^{-2} T} \,
	\prod_{i=1}^r \, r(t_i-t_{i-1})  \, \dd\vec{t} \\
	& = 	\big(4\pi+o(1)\big) \, \frac{1}{\log \epsilon^{-2}}
	\, \sum_{r \ge 0} (\beta_\epsilon^2 \, \cR_\epsilon)^{r} \,
	\P(\cT_1^\epsilon + \ldots + \cT_r^\epsilon < \epsilon^{-2}T) \,.
\end{split}
\end{equation}

\medskip

\noindent
{\bf Step 3: Variance computation.}
We can finally complete the proof of Theorem~\ref{th:SHEvariance},
by proving relation \eqref{eq:SHE2ndmom}.
Assume that we have shown that, for some $\theta \in \R$,
\begin{equation}\label{eq:precas}
	\beta_\epsilon^2 \, \cR_\epsilon
	= 1 + \frac{\theta}{\log \epsilon^{-2}} \big(1+o(1)\big) \,.
\end{equation}
Then, by \eqref{eq:Riemann} and \eqref{eq:bKhat}, we can write
\begin{equation} \label{eq:bKhatlim}
	\lim_{\epsilon \to 0} \, \hat \bK_T^\epsilon
	= 4\pi \, \int_0^\infty e^{\theta u} \, \P(Y_u \le T) \, \dd u \, \,,
\end{equation}
and the convergence is uniform in $T \in [0,1]$ (because
both sides are increasing and the right hand side is continuous in $T$).
Looking back at \eqref{Var}, \eqref{eq:Kdue} and \eqref{eq:approx2},
after the change of variables $\hat x, \tilde x \to
\epsilon^{-1} \hat x, \epsilon^{-1} \tilde x$, we obtain
\begin{equation*}
\begin{split}
	\bbvar \big(u^\epsilon(t,\phi) \big) = \big(1+o(1)\big) \,
	& \int_{\R^2\times \R^2}
	\, \dd \hat x \, \, \dd \tilde x \,\,\phi( \hat x) \, \phi(\tilde x)  \\
	& \qquad \cdot \int_0^{t} \dd s \, \int_{\R^2} \dd z \,
	g_{2 s}\big(\epsilon  z - (\hat x - \tilde x)\big) \, J(z) \,
	\hat\bK^\epsilon_{t-s} \, + \, o(1) \,.
\end{split}
\end{equation*}
Recalling \eqref{eq:bKhatlim},
since $\int_{\R^2} J(z) \, \dd z = 1$, we have shown that
\begin{equation*}
\begin{split}
	\lim_{\epsilon \to 0} \bbvar \big(u^\epsilon(t,\phi) \big)
	= \int_{\R^2\times \R^2}
	\, \dd \hat x \, \, \dd \tilde x \,\,\phi( \hat x) \, \phi(\tilde x) \,
	Q(\hat x - \tilde x) \,,
\end{split}
\end{equation*}
where
\begin{equation*}
\begin{split}
	Q(x) := 4\pi \, \int_0^{t} \dd s \,
	g_{2 s}(x) \,
	\int_0^\infty e^{\theta u} \, \P(Y_u \le t-s) \, \dd u \,.
\end{split}
\end{equation*}
Recalling that $f_s(\cdot)$ denotes the density of $Y_s$,
see \eqref{eq:fst},
and using the definition \eqref{eq:Gtheta} of $G_\theta(\cdot)$,
we can rewrite $Q(x)$ as follows:
\begin{equation*}
\begin{split}
	Q(x) & = 4\pi \, \int_0^{t} \dd s \,
	g_{2 s}(x) \,
	\int_0^\infty e^{\theta u} \, \bigg( \int_0^{t-s} f_u(r) \, \dd r \bigg) \, \dd u \\
	& = 4\pi \, \int_0^{t} \dd s \,
	g_{2 s}(x) \,
	\int_0^{t-s} G_\theta(r) \, \dd r
	= 4\pi \, \int_{0 < s < v < t}
	g_{2 s}(x) \,
	G_\theta(v-s) \, \dd s \, \dd v \\
	&= 2\pi \, \int_{0 < s < v < t}
	g_{s}(x/\sqrt{2}) \,
	G_\theta(v-s) \, \dd s \, \dd v \,.
\end{split}
\end{equation*}
A look at \eqref{eq:Kt} shows that $Q(x) = 2 K_{t, \theta}(x/\sqrt{2})$, hence
relation \eqref{eq:SHE2ndmom} is proved.

\smallskip

It only remains to prove \eqref{eq:precas} and to identify $\theta$.
Note that by \eqref{eq:gu}
\begin{equation*}
	\int_0^{\epsilon^{-2}} g_{2t}(x-y) \,\dd t
	= \frac{1}{4\pi} \int_0^{\epsilon^{-2}}
	\frac{e^{-\frac{|x-y|^2}{4t}}}{t} \, \dd t
	= \frac{1}{4\pi} \int_{\frac{\epsilon^2 |x-y|^2}{4}}^\infty
	\frac{e^{-u}}{u} \, \dd u
\end{equation*}
Using the following representation of the Euler-Mascheroni constant:
\begin{equation*}
	\int_0^\infty \bigg( \frac{1}{t(t+1)} - \frac{e^{-t}}{t} \bigg) \, \dd t
	= \gamma \,,
\end{equation*}
see \cite[Entry 8.367 (9), page 906]{GR}, and since
\begin{equation*}
	\int_a^\infty \frac{1}{t(t+1)} \, \dd t =
	\int_a^\infty \bigg(\frac{1}{t} - \frac{1}{t+1}\bigg) \, \dd t=  \log(1+a^{-1}) \,,
\end{equation*}
we see that as $\epsilon \to 0$,
\begin{equation*}
	\int_0^{\epsilon^{-2}} \, g_{2t} (x-y) \, \dd t
	= \frac{1}{4\pi} \bigg\{ \log\bigg(1+\frac{4}{\epsilon^2 |x-y|^2}\bigg)
	- \gamma + o(1) \bigg\} \,.
\end{equation*}
Recalling the definition \eqref{eq:erre} of $r(t)$, we have
\begin{equation*}
\begin{split}
	\cR_\epsilon
	& := \int_0^{\epsilon^{-2}} r(t) \, \dd t
	= \int_{(\R^2)^2} J(x) J(y) \int_0^{\epsilon^{-2}} g_{2t}(x-y) \,\dd t \,\,\dd x\dd y \\
	& =  \frac{1}{4\pi} \bigg\{ \log \epsilon^{-2} + \log 4 +2 \int_{\R^2}\int_{\R^2}
	J(x) \log \frac{1}{ |x-y|}\,J(y)\,\dd x \, \dd y - \gamma + o(1) \bigg\} \,.
\end{split}
\end{equation*}
Finally, recalling \eqref{eq:betaeps2b}, we obtain
\begin{equation*}
	\beta_\epsilon^2 \, \cR_\epsilon =
	1 +  \frac{\log 4 +2 \int_{\R^2}\int_{\R^2}
	J(x) \log \frac{1}{ |x-y|}\,J(y)\,\dd x \, \dd y - \gamma
	+ \rho/\pi}{\log \epsilon^{-2}} \big(1+o(1)\big) \,.
\end{equation*}
This shows that \eqref{eq:precas} holds,
with the expression in \eqref{formula_theta} for $\theta$.\qed

\begin{figure}[t]
\begin{tikzpicture}[scale=0.5]
\draw  [fill] (-8,2)  circle [radius=0.1];\draw[thick, dashed](-8,2) to [out=-30,in=100] (-6,0.4); \draw[thick](-6, 0.2) circle (0.6 cm);
\draw [fill](-6,0.4) circle [radius=0.1];
 \draw  [fill] (-8,-1)  circle [radius=0.1];  \node at (-8.2,1.5) {\scalebox{0.7}{$(0,z_1)$}};
 \draw [thick, dashed] (-6,0.4) to [out=80,in=100] (-4.7,0.4); \draw  [fill] (-4.7,0.4)  circle [radius=0.1];
  \draw[thick, dashed] (-6, 0.4) to [out=-80,in=-100] (-4.7,0.1); \draw  [fill] (-4.7,0.1) circle [radius=0.1];
  \draw[thick]  (-4.7,0.2) circle [radius=0.6];
  \draw [thick, dashed] (-4.7,0.4) to [out=80,in=100] (-2.9,0.4); \draw  [fill] (-3.0,0.4)  circle [radius=0.1];
  \draw[thick, dashed] (-4.7,0.1) to [out=-80,in=-100] (-2.9,0.1); \draw  [fill] (-3.0,0.1) circle [radius=0.1];
   \draw [thick, dashed] (-3.0,0.4) to [out=60,in=120] (6,0); \draw  [fill] (6,0)  circle [radius=0.1];
  \draw[thick]  (-3.0,0.2) circle [radius=0.6];
   \node at (-8.2,-3.5) {\scalebox{0.7}{$(0,z_3)$}};
  \draw [thick, dashed] (-2.9,0.1) to [out=-80,in=160] (-1,-2.4); \draw  [fill] (-1,-2.4)  circle [radius=0.1];
  \draw[thick, dashed] (-8,-3) to [out=-50,in=-130] (-1,-2.8); \draw  [fill] (-1.0,-2.8) circle [radius=0.1];
  \draw[thick]  (-1,-2.6) circle [radius=0.6];
   \draw [thick, dashed] (-1,-2.4) to [out=70,in=110] (0.7,-2.4); \draw  [fill] (0.7,-2.4)  circle [radius=0.1];
  \draw[thick, dashed] (-1,-2.8) to [out=-70,in=-110] (0.7,-2.8); \draw  [fill] (0.7,-2.8) circle [radius=0.1];
  \draw[thick]  (0.7,-2.6) circle [radius=0.6];
  \node at (-1,-3.8) {\scalebox{0.7}{$(t_i,\tilde x_i)$}};
   \node at (-1,-1.4) {\scalebox{0.7}{$(t_i,\hat x_i)$}};
   \draw [thick, dashed] (0.7,-2.4) to [out=70,in=110] (2.4,-2.4); \draw  [fill] (2.4,-2.4)  circle [radius=0.1];
  \draw[thick, dashed] (0.7,-2.8) to [out=-70,in=-110] (2.4,-2.8); \draw  [fill] (2.4,-2.8) circle [radius=0.1];
  \draw[thick]  (2.4,-2.6) circle [radius=0.6];
   \draw [thick, dashed] (2.4,-2.4) to [out=70,in=110] (4.1,-2.4); \draw  [fill] (4.1,-2.4)  circle [radius=0.1];
  \draw[thick, dashed] (2.4,-2.8) to [out=-70,in=-110] (4.1,-2.8); \draw  [fill] (4.1,-2.8) circle [radius=0.1];
  \draw[thick]  (4.1,-2.6) circle [radius=0.6];
   \draw [thick, dashed] (4.1,-2.4) to [out=70,in=-160] (6,-0.4); \draw  [fill] (6,-0.4)  circle [radius=0.1];
  \draw[thick, dashed] (4.1,-2.8) to [out=-70,in=180] (8,-4.8); \draw  [fill] (4.1,-2.8) circle [radius=0.1];
  \draw[thick]  (6,-0.2) circle [radius=0.6];
  \draw [thick, dashed] (6,-0.4) to [out=-70,in=-160] (7.5,-1.4);
 \draw [thick, dashed] (6,0) to [out=60,in=160] (7.5,1.2);
  \draw[thick]  (6,-0.2) circle [radius=0.6];
  \node at (-8.2,-1.5) {\scalebox{0.7}{$(0,z_2)$}};
 \draw[thick, dashed] (-7.8,-0.9) to [out=200,in=260] (-6,0);  \draw  [fill] (-6,0)  circle [radius=0.1]; \draw  [fill] (-6,0)  circle [radius=0.1];
     \draw  [fill] (-8,-3)  circle [radius=0.1];
\end{tikzpicture}
\caption{Diagramatic representation of the expansion \eqref{mathcal_I}
of the third moment of the solution of SHE.
Due to the space-mollification of the noise, we have non trivial correlations
 between space-time points $(t_i,\hat x_i)$ and $(t_i,\tilde x_i)$
--- which intuitively belong to two copies of the continuum polymer path, i.e.\ Brownian motion ---
only when $\hat x_i-\tilde x_i$ is in the support of $J(\cdot)$.
This is slightly different from the lattice case, cf.\
the corresponding expansion \eqref{eq:prel2Q} for the directed
polymer,
where non trivial correlations occur only if $\hat x_i = \tilde x_i$,
i.e.\ two copies of the polymer exactly meet.
The disks represent the support of $J(\cdot)$
and should be understood as disks in space $\R^2$
(we drew them in space-time for graphical clarity).
An array of consecutive disks represents the quantity
${\mathcal{U}_\epsilon}(s,t; \bx, \by)$ in \eqref{mathcalU}, with $(s,\bx)$ and $(t,\by)$ corresponding to
 space time location of the points inside the
first and the last disk in a
sequence. They are the analogues of the wiggled lines in Figure~\ref{figure}.}
\label{SHEfigure}
\end{figure}
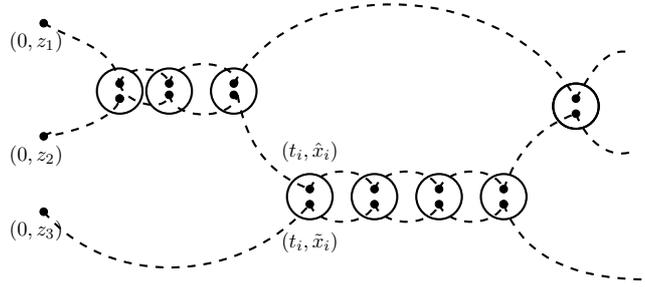

\subsection{Proof of Theorem \ref{th:SHE3rdmom}}

We use the expansion \eqref{SHEexpand} to evaluate
\begin{align}\label{SHEthird_mom}
	\E \Big[ \Big(u^\epsilon(t,\phi)
	- {\textstyle\int_{\mathbb{R}^2} \phi(x)\,\dd x} \Big)^3 \Big] \,.
\end{align}
We are going to expand the third power and compute the expectation, which
amounts to ``pairwise matchings'' of the instances of the noise $W(\dd t_i \,\dd x_i)$
(note that ``triple matchings'' are automatically ruled out, because Gaussians have vanishing third moment).
This will lead to an expression, see \eqref{mathcal_I} below,
which is similar to the one we found for the directed polymer, cf.\ \eqref{eq:prel2Q},
with some additional complications due to the continuous setting.

\smallskip

Before entering the technical details, let us give the heuristic picture,
which is represented in Figure~\ref{SHEfigure}. When taking the third
power of the expansion \eqref{SHEexpand}, we have three sets of coordinates,
that we may label $a, b, c$, that have to match in pairs. Each
matching can  be of three types $ab, bc, ac$, and we say that consecutive
matching of the same type form a \emph{stretch}.
The contribution of each stretch is encoded by a quantity
$\cU_\epsilon(s,t; \bx, \by)$.

The rest of the proof is divided in two steps.
\begin{itemize}
\item In the first step, we define the single-stretch quantity
$\cU_\epsilon(s,t; \bx, \by)$ and we provide some key estimates on it,
based on local renewal theorems obtained in \cite{CSZ18}.

\item In the second step, we express the centered third moment \eqref{SHEthird_mom}
as a sum over the contributions of stretches, see \eqref{mathcal_I}.
We then derive the asymptotic behavior of this expression and show that it
is bounded, completing the proof of Theorem~\ref{th:SHE3rdmom}.
\end{itemize}

\medskip

\noindent
\textbf{Step 1: Single stretch.}
We  introduce a quantity $\cU_\epsilon(s,t; \bx, \by)$
which is an analogue of $U_N(t-s,y-x)$ in the discrete setting,
see \eqref{eq:UNVar}, linked to the point-to-point variance.
Due to the presence of the mollifier,
the space variables are couples $\bx = (\hat x, \tilde x),
\by = (\hat y, \tilde y) \in (\R^2)^2$. Here is the definition:
\begin{equation}\label{mathcalU}
\begin{split}
	{\mathcal{U}_\epsilon}(s,t; \bx, \by) :=
	& \ \beta_\epsilon^2 \,
	g_{t-s} (\hat x, \hat y) \, g_{t-s}(\tilde x, \tilde y) \,
	J(\hat y - \tilde y) \\
	& \ + \ \beta_\epsilon^2 \, \sum_{r\geq 1} \beta_\epsilon^{2 r}
	\int\limits_{s<t_1<\cdots<t_r
	< t} \
	\prod_{i=1}^r \dd t_i \ \int\limits_{(\R^2)^{2r}}
	\, \prod_{i=1}^{r}\dd \hat z_i \, \dd {\tilde z}_i\,
	 \\
	 & \qquad\qquad\qquad\qquad \cdot g_{t_1 - s}(\hat x, \hat z_1) \,
	 g_{t_1 - s}(\tilde x, \tilde z_1) \, J(\hat z_1 - \tilde z_1)  \, \cdot\\
	&\qquad\qquad\qquad\qquad  \cdot \, \prod_{i=2}^{r} g_{t_i-t_{i-1}}( \hat z_{i-1}, \hat z_i )
	\, g_{t_i-t_{i-1}}(\tilde z_{i-1}, \tilde z_i )  \, J(\hat z_i - \tilde z_i)  \, \cdot\\
	 & \qquad\qquad\qquad\qquad \cdot \, g_{ t - t_r}(\hat z_r, \hat y) \,
	 g_{ t - t_r}(\tilde z_r, \tilde y) \,  J(\hat y - \tilde y)\,,
\end{split}
 \end{equation}
where we recall that $J = j * j$
and we agree that the product equals $1$ for $r=1$.

\smallskip

Let us now evaluate $\int_{(\R^2)^2} {\mathcal{U}_\epsilon}(s,t; \bx, \by)
\, \dd \by$. We use the identity \eqref{eq:identity}
and make the change of variables $w_i := \hat z_i + \tilde z_i$,
 $z_i := \hat z_i - \tilde z_i$ for $i=1,\ldots, r$,
as well as $w_{r+1} := \hat y + \tilde y$,
 $z_{r+1} := \hat y - \tilde y$. Integrating out all the $w_i$'s
 for $i = r+1, r, \ldots, 1$,
as we did in \eqref{eq:argu}-\eqref{NewMathcalU},
we obtain
\begin{equation}\label{mathcalU20}
\begin{split}
	\int_{(\R^2)^2} {\mathcal{U}_\epsilon}(s,t; \bx, \by)
	\, \dd \by =
	& \ \beta_\epsilon^2 \, \sum_{r\geq 0} \beta_\epsilon^{2 r}
	\int\limits_{ s<t_1<\cdots<t_r
	< t} \
	\prod_{i=1}^r \dd t_i \ \int\limits_{(\R^2)^{r+1}}
	\, \prod_{i=1}^{r+1}\dd z_i
	 \\
	 & \qquad\qquad\qquad\qquad  \cdot g_{2(t_1 - s)}\big(z_1 - (\hat x - \tilde x) \big) \,
	 J( z_1) \, \cdot \\
	&\qquad\qquad\qquad\qquad  \cdot\, \prod_{i=2}^{r+1} g_{2(t_i-t_{i-1})}( z_i - z_{i-1} )
	\, J (z_i)  \,,
\end{split}
 \end{equation}
where we set $t_{r+1} := t$.
We can rewrite this relation more compactly as follows:
 \begin{equation}\label{eq:mathcalU2}
	\int_{(\R^2)^2} {\mathcal{U}_\epsilon}(s,t; \bx, \by)
	\, \dd \by = \bU_\epsilon(t-s; \hat x - \tilde x) \,,
\end{equation}
where we set, with $t_0 := 0$ and $z_0 := z$,
\begin{equation}\label{mathcalU21}
\begin{split}
	\bU_\epsilon(t; z) :=
	& \ \sum_{r\geq 0} \beta_\epsilon^{2 (r+1)} \!\!\!\!
	\int\limits_{0 <t_1<\cdots<t_r
	< t} \
	\dd \vec{t} \ \int\limits_{(\R^2)^{r+1}}
	\, \dd \vec{z}
	\ \prod_{i=1}^{r+1} g_{2(t_i-t_{i-1})}( z_i - z_{i-1} )
	\, J (z_i)  \,.
\end{split}
 \end{equation}
Note that $\bU_\epsilon(t;z)$ looks similar to $\bK_t^\epsilon(z)$, see
\eqref{eq:bK}, with an important difference: the product in \eqref{mathcalU21}
includes one more term $i=r+1$.
This extra term makes $\bU_\epsilon(t;z)$ close to a \emph{local renewal function},
as we now explain.

Since we content ourselves with an upper bound,
recalling the definition \eqref{eq:errebar} of $\bar r(t)$, we can estimate
\begin{equation} \label{eq:thenre}
	\sup_{z \in \mathrm{supp}(J)} \bU_\epsilon(t; z) \le
	\sum_{r\geq 0} \beta_\epsilon^{2 (r+1)} \!\!\!\!
	\int\limits_{0 <t_1<\cdots<t_r
	< t} \
	\dd \vec{t} \ \prod_{i=1}^{r+1} \, \bar r(t_i - t_{i-1}) \,.
\end{equation}
Let us introduce i.i.d.\ random variables $(\bar\cT_i^\epsilon)_{i\in\N}$
as in \eqref{law_cTbar}, and denote by $\bar f_k^\epsilon(t)$ the density of the
associated random walk:
\begin{equation*}
	\bar f_k^\epsilon(t) :=
	\frac{\P(\bar\cT_1^\epsilon + \ldots + \bar\cT_k^\epsilon \in \dd t)}{\dd t} \,.
\end{equation*}
We can then rewrite \eqref{eq:thenre} as follows:
\begin{equation} \label{eq:thenre2}
	\sup_{z \in \mathrm{supp}(J)} \bU_\epsilon(t; z) \le
	\sum_{r\geq 0} \lambda_\epsilon^{r+1}
	\, \bar f_{r+1}^\epsilon(t) \,, \qquad \text{where} \qquad
	\lambda_\epsilon := \beta_\epsilon^2 \, \bar \cR_\epsilon \,.
\end{equation}
The right hand side can be viewed as a (weighted) \emph{renewal function}: it is the continuum version
of the quantity $U_N(n)$ in \eqref{eq:UNnrenewal}
(with the usual identification $N = \epsilon^{-2}$).
We already remarked that $\lambda_\epsilon = 1 + O(\frac{1}{\log \epsilon^{-2}})$,
by \eqref{eq:betaeps2b} and \eqref{eq:barcR}.
Proposition~\ref{UNtime} holds in this continuum setting
\cite[Remark~1.7]{CSZ18},
hence by the analogue of relation \eqref{est:UNunif}
we get
\begin{equation} \label{eq:thenre3}
	\sup_{z \in \mathrm{supp}(J)} \bU_\epsilon(t; z) \le
	C \, \frac{\log \epsilon^{-2}}{\epsilon^{-2}} \,
	G_\theta(\epsilon^2 t) \,.
\end{equation}

In conclusion, by \eqref{eq:mathcalU2}, we have proved the crucial upper bound
\begin{equation}\label{eq:cruub}
	\sup_{\bx \in (\R^2)^2: \, \hat x - \tilde x \,\in\,
	\mathrm{supp}(J)} \int_{(\R^2)^2} {\mathcal{U}_\epsilon}(s,t; \bx, \by)
	\, \dd\by \, \le \, C \, \epsilon^2 \, \log \epsilon^{-2} \,
	G_\theta(\epsilon^2(t-s)) \,.
\end{equation}

\medskip

\noindent
\textbf{Step 2: Third moment computation.}
We expand the third power in \eqref{SHEthird_mom} using the
Wiener chaos representation  \eqref{SHEexpand}.
We then compute the expectation,
which forces pairwise matchings of the instances of the noise $W(\dd t_i \,\dd x_i)$.
Since
\begin{equation*}
	\int_{\R^2} j(\hat x_i - x_i) \, j(\tilde x_i - x_i) \, \dd x_i = J (\hat x_i - \tilde x_i) \,,
\end{equation*}
we obtain the following expression (analogous to
the directed polymer case, see \eqref{eq:prel2Q}),
where $\cU_\epsilon(a_i,b_i; \bx_i, \by_i)$ are the contributions
of stretches of consecutive pairwise matchings:
\begin{equation}\label{mathcal_I}
\begin{split}
	& \E \Big[ \Big(  u^\epsilon(t,\phi)
	- {\textstyle\int_{\mathbb{R}^2} \phi(x)\,\dd x} \Big)^3 \Big] =\sum_{m\geq 2} 3\,
	\mathcal{I}^{(\epsilon,m)}_t \hskip 2cm \text{with} \\
	\mathcal{I}^{(\epsilon,m)}_t &:= \beta_\epsilon^{2m}
	\!\!\!\!\!\!\!\!\!\!\!\!\!\!\!
	\idotsint\limits_{\substack{0 < a_1 < b_1 < a_2 < b_2 < \ldots < a_m < b_m < \epsilon^{-2} t \\
	\bx_1,\by_1,\dots,\bx_m, \by_m \,\in\, (\R^2)^2}}
	\!\!\!\!\!\!\!\!\!\! \!\!\!\!\!\!\!\!  \dd \vec a\, \dd\vec b \,\dd\vec\bx \,\dd\vec\by
	\int\limits_{(\R^2)^3} \! \dd z_1 \,\dd z_2 \,\dd z_3 \,\,\epsilon^6 \,
	\phi(\epsilon z_1) \phi(\epsilon z_2) \phi( \epsilon z_3)  \, \cdot\\
	& \rule{0pt}{1.4em}\qquad \cdot \, g_{a_1}(z_1,\tilde x_1) \, g_{a_1}(z_2,\hat x_1)
	\,J(\hat x_1 - \tilde x_1) \,\,{\mathcal{U}_\epsilon}(a_1,b_1; \bx_1, \by_1) \, \cdot  \\
	& \rule{0pt}{1.4em} \qquad \cdot \sum_{\hat Y_1\in\{\hat y_1,\tilde y_1\}}
	g_{a_2}(z_3,\tilde x_2)  \, g_{a_2-b_1}(\hat Y_1, \hat x_2 )  \,
	J(\hat x_2- \tilde x_2) \,\,{\mathcal{U}_\epsilon}(a_2,b_2; \bx_2, \by_2)  \, \cdot  \\
	& \rule{0pt}{1.4em} \qquad \cdot \prod_{i=3}^m\,\,
	\sumtwo{\hat Y_{i-1}\in\{\hat y_{i-1}, \tilde y_{i-1}\} }{ \tilde Y_{i-2}\in\{\hat y_{i-2},
	\tilde y_{i-2}\}\setminus\{\hat Y_{i-2}\}}
	g_{a_i-b_{i-1}}(\hat Y_{i-1},\hat x_i)\, g_{a_i-b_{i-2}}(\tilde Y_{i-2}, \tilde x_i)  \, \cdot\\
	& \qquad\qquad\qquad\qquad\qquad\qquad\qquad\qquad
	 \cdot \, J(\hat x_i- \tilde x_i)  \,\,
	{\mathcal{U}_\epsilon}(a_i,b_i; \bx_i, \by_i) \,.
\end{split}
\end{equation}

\begin{remark}
This formula looks actually slightly different than the corresponding expansion
for the directed polymer \eqref{eq:prel2Q}, for the presence of the sums over
$\hat Y_{i-1}$ and $\tilde Y_{i-2}$.
The reason is that,
each time that two copies of the continuum polymers ``spilt''
(i.e.\ at the end of each stretch)
we have to decide which one will meet the unmatched copy and which one will
wait until the next split. But since the two continuum polymers do not match exactly but rather lie
inside the support of $J(\cdot)$, the symmetry
that was present in the discrete case is broken. This gives rise to the sum over
$\hat Y_{i-1}\in\{\hat y_{i-1}, \tilde y_{i-1}\}$
and $ \tilde Y_{i-2}\in\{\hat y_{i-2}, \tilde y_{i-2}\}\setminus\{\hat Y_{i-2}\}$.
\end{remark}

We estimate $\mathcal{I}^{(\epsilon,m)}_t$ as follows. We start by integrating $\by_m$
using \eqref{eq:cruub}, to get
\begin{equation*}
	\int_{(\mathbb{R}^2)^2} {\mathcal{U}_\epsilon}(a_m,b_m; \bx_m, \by_m) \, \dd \by_m
	\le C \, \epsilon^2 \, \log \epsilon^{-2} \,
	G_\theta(\epsilon^2(b_m - a_m)) \,,
\end{equation*}
uniformly over the allowed $\bx_m$. Next we integrate over
$\hat x_m$ and $\tilde x_m$, to get:
\begin{equation*}
\begin{split}
	& \int_{(\R^2)^2} \dd \hat x_m \, \dd \tilde x_m \,
	g_{a_m-b_{m-1}}(\hat Y_{m-1},\hat x_m)\, g_{a_m-b_{m-2}}(\tilde Y_{m-2}, \tilde x_m)
	J(\hat x_m- \tilde x_m) \\
	& \quad = \big(g_{a_m-b_{m-1}} * J * g_{a_m-b_{m-2}}\big)(
	\hat Y_{m-1} - \tilde Y_{m-2}) \\
	& \quad
	= \big(g_{2a_m-b_{m-1} -b_{m-2}} * J\big)(
	\hat Y_{m-1} - \hat Y_{m-2}) \\
	& \quad
	 \le \|g_{2a_m-b_{m-1} -b_{m-2}} * J\|_\infty \le
	\| g_{2a_m-b_{m-1} -b_{m-2}} \|_\infty \\
	& \quad
	 = \frac{1}{2\pi (2a_m-b_{m-1} -b_{m-2})}
	\le \frac{1}{4\pi \sqrt{( a_m-b_{m-1}) \,(a_m-b_{m-2} )}} \,,
\end{split}
\end{equation*}
having used $2xy \le x^2 + y^2$ in the last equality.

We iterate this procedure for $i=m-1, m-2, \ldots$ until $i = 3$: we can first
integrate out $\by_i$ and then $\hat x_i$ and $\tilde x_i$. This
replaces ${\mathcal{U}_\epsilon}(a_i,b_i; \bx_i, \by_i)$
by $C \, \epsilon^2 \, \log \epsilon^{-2} \, G_\theta(\epsilon^2(b_i - a_i))$
and $g_{a_i-b_{i-1}}(\hat Y_{i-1},\hat x_i)\, g_{a_i-b_{i-2}}(\tilde Y_{i-2}, \tilde x_i)$
by $(4\pi \sqrt{( a_i-b_{i-1}) \,(a_i-b_{i-2} )})^{-1}$.
We also recall that $\beta_\epsilon^2 \le C (\log \epsilon^{-2})^{-1}$,
see \eqref{eq:betaeps2b}. Looking back at \eqref{mathcal_I}, we obtain for some $C < \infty$
\begin{equation*}
\begin{split}
	\mathcal{I}^{(\epsilon,m)}_t & \le
	(\beta_\epsilon^2)^2 \, (C \, \epsilon^2)^{m-2}
	\!\!\!\!\!\!\!\!\!\!\!\!\!\!
	\idotsint\limits_{\substack{0 < a_1 < b_1 < a_2 < b_2 < \ldots < a_m < b_m < \epsilon^{-2} t \\
	\bx_1,\by_1,\bx_2, \by_2 \,\in\, (\R^2)^2}}
	\!\!\!\!\!\!\!\!\!\!\!\!\!\!\!\!\!\!\!\! \!  \dd \vec a\, \dd\vec b \,\dd\vec\bx \,\dd\vec\by
	\int\limits_{(\R^2)^3} \! \dd z_1 \,\dd z_2 \,\dd z_3 \,\,\epsilon^6 \,
	\phi(\epsilon z_1) \phi(\epsilon z_2) \phi( \epsilon z_3)  \,\cdot \\
	& \rule{0pt}{1.4em}\qquad\qquad \cdot \, g_{a_1}(z_1,\tilde x_1) \, g_{a_1}(z_2,\hat x_1)
	\, J(\hat x_1 - \tilde x_1) \,\,{\mathcal{U}_\epsilon}(a_1,b_1; \bx_1, \by_1)  \, \cdot \\
	& \rule{0pt}{1.4em} \qquad\qquad \cdot \sum_{\hat Y_1\in\{\hat y_1,\tilde y_1\}}
	g_{a_2}(z_3,\tilde x_2)  \, g_{a_2-b_1}(\hat Y_1, \hat x_2 )  \,
	J(\hat x_2- \tilde x_2) \,\,{\mathcal{U}_\epsilon}(a_2,b_2; \bx_2, \by_2)  \, \cdot \\
	& \rule{0pt}{1.4em} \qquad\qquad \cdot \prod_{i=3}^m\,\,
	\frac{G_\theta(\epsilon^2(b_i - a_i))}{\sqrt{( a_m-b_{m-1}) \,(a_m-b_{m-2} )}}  \,.
\end{split}
\end{equation*}

We can now conclude with the last bounds.
\begin{itemize}
\item We integrate out $\by_2$,
replacing ${\mathcal{U}_\epsilon}(a_2,b_2; \bx_2, \by_2)$
by $C \, \epsilon^2 \, \log \epsilon^{-2} \, G_\theta(\epsilon^2(b_2 - a_2))$,
see \eqref{eq:cruub}.
Then we bound $\phi(\epsilon z_3) \le \|\phi\|_\infty$ and we integrate out $z_3$
(which makes $g_{a_2}(z_3, \tilde x_2)$ disappear)
followed by $\tilde x_2$ and $\hat x_2$
(which make $g_{a_2-b_1}(\hat Y_1, \hat x_2 )  \, J(\hat x_2- \tilde x_2)$ disappear).

\item We integrate out $\by_1$,
replacing ${\mathcal{U}_\epsilon}(a_1,b_1; \bx_1, \by_1)$
by $C \, \epsilon^2 \, \log \epsilon^{-2} \, G_\theta(\epsilon^2(b_1 - a_1))$,
see \eqref{eq:cruub}. Then we bound $\phi(\epsilon z_1) \le \|\phi\|_\infty$
and we integrate out $z_1$
(which makes $g_{a_1}(z_1, \tilde x_1)$ disappear)
followed by $\tilde x_1$ and $\hat x_1$
(which make $g_{a_1}(z_2, \hat x_1 )  \, J(\hat x_1- \tilde x_1)$ disappear).
Lastly, we integrate out $z_1$, which turns the factor $\epsilon^6$ into $\epsilon^4$.
\end{itemize}
This leads to
\begin{equation*}
\begin{split}
	\mathcal{I}^{(\epsilon,m)}_t \le
	(C \, \epsilon^2)^{m} \, \epsilon^4 \,
	\!\!\!\!\!\!\!\!\!\!\!
	\idotsint\limits_{0 < a_1 < b_1 < a_2 < b_2 < \ldots < a_m < b_m < \epsilon^{-2} t}
	\!\!\!\!\!\!\!\!\!\!\!  \dd \vec a\, \dd\vec b \
	& G_\theta(\epsilon^2(b_1-a_1)) \, G_\theta(\epsilon^2(b_2 - a_2))  \, \cdot\\
	& \, \cdot\prod_{i=3}^m\,\,
	\frac{G_\theta(\epsilon^2(b_i - a_i))}{\sqrt{( a_m-b_{m-1}) \,(a_m-b_{m-2} )}}  \,.
\end{split}
\end{equation*}
Finally, the change of variables $a_i \to \epsilon^{-2} a_i$,
$b_i \to \epsilon^{-2} b_i$ gives
\begin{equation*}
\begin{split}
	\mathcal{I}^{(\epsilon,m)}_t \le
	C^m
	\!\!\!
	\idotsint\limits_{0 < a_1 < b_1 < a_2 < b_2 < \ldots < a_m < b_m < t}
	\!\!\!\!\!\!\!\!\!\!\!  \dd \vec a\, \dd\vec b \
	& G_\theta(b_1-a_1) \, G_\theta(b_2 - a_2)  \, \cdot\\
	& \,\cdot \prod_{i=3}^m\,\,
	\frac{G_\theta(b_i - a_i)}{\sqrt{( a_m-b_{m-1}) \,(a_m-b_{m-2} )}}  \,.
\end{split}
\end{equation*}
Note that the right hand side, \emph{which carries no dependence on $\epsilon$},
coincides for $t=1$ with $J^{(m)}$ defined in \eqref{eq:Imphipsibd2}.
We already showed that $J^{(m)}$ decays super-exponentially fast as $m\to\infty$,
see \eqref{eq:Jmexpbd}-\eqref{eq:Jmex2}.
Looking at the first line of \eqref{mathcal_I}, we see that the proof is completed.
\qed

\section*{Acknowledgements}
F.C. is supported by the PRIN Grant  20155PAWZB ``Large Scale Random Structures''.
R.S. is supported by NUS grant R-146-000-253-114. N.Z.
is supported by EPRSC through grant EP/R024456/1.

%%%%%%%%%%%%%%%%%%%%%%%%%%%%%%%%%%%%%%%%%%%%%%%%%%%%%%%%%%%%%%%%%%%%%%%%%%%%%%
%%%%%%%%%%%%%%%%%%%%%%%%% The bibliography %%%%%%%%%%%%%%%%%%%%%%%%%%%%%%%%%%%
%%%%%%%%%%%%%%%%%%%%%%%%%%%%%%%%%%%%%%%%%%%%%%%%%%%%%%%%%%%%%%%%%%%%%%%%%%%%%%

\bigskip

\end{document}